\documentclass[11pt,a4paper,oneside]{amsart}\sloppy
\usepackage{amsfonts, amsmath, amssymb, amsthm}
\usepackage[colorlinks=true,citecolor=blue]{hyperref}
\usepackage[margin=1in]{geometry}
\usepackage[utopia]{mathdesign}
\usepackage[mathscr]{euscript}

\usepackage{graphicx}
\usepackage{parskip}
\usepackage{enumerate}
\usepackage{verbatim}
\usepackage{tikz}
\usepackage{algorithmic}
\usetikzlibrary{decorations,decorations.pathreplacing}

\usepackage{lineno}
\usepackage{booktabs}
\newtheorem{theorem}{Theorem}[section]
\newtheorem*{theorem*}{Theorem}
\newtheorem{lemma}[theorem]{Lemma}
\newtheorem{prop}[theorem]{Proposition}

\newtheorem{corollary}[theorem]{Corollary}

\theoremstyle{definition}
\newtheorem{define}[theorem]{Definition}
\newtheorem{remark}[theorem]{Remark}
\newtheorem{example}[theorem]{Example}

\newtheorem{algo}[theorem]{Algorithm}

\begingroup
    \makeatletter
    \@for\theoremstyle:=definition,remark,plain\do{%
        \expandafter\g@addto@macro\csname th@\theoremstyle\endcsname{%
            \addtolength\thm@preskip\parskip
            }%
        }
\endgroup

\usepackage{mathtools}
\newcommand{\defeq}{\vcentcolon=}

\newcommand{\df}[1]{{{\color{blue!50!black}\em #1}}}
\newcommand{\mb}[1]{\mathbb{#1}}

\numberwithin{equation}{section}

\renewcommand{\epsilon}{\varepsilon}
\renewcommand{\phi}{\varphi}

\newcommand{\upstart}{ .. controls +(0.5,0) and +(-0.2,-0.2) .. ++(1,0.5) }
\newcommand{\upend}{ .. controls +(0.2,0.2) and +(-0.5,0) .. ++(1,0.5)}
\newcommand{\downstart}{ .. controls +(0.5,0) and +(-0.2,0.2) .. ++(1,-0.5)}
\newcommand{\downend}{ .. controls +(0.2,-0.2) and +(-0.5,0) .. ++(1,-0.5)}
\newcommand{\downup}{ .. controls +(0.3,-0.3) and +(-0.3,-0.3) .. ++(1,0)}
\newcommand{\updown}{ .. controls +(0.3,0.3) and +(-0.3,0.3) .. ++(1,0)}

\newcommand{\ds}{\downstart}
\newcommand{\de}{\downend}
\newcommand{\us}{\upstart}
\newcommand{\ue}{\upend}
\newcommand{\ud}{\updown}
\newcommand{\du}{\downup}
\newcommand{\g}[1]{ -- ++(#1,0)} 
\newcommand{\ug}[1]{ -- ++(#1,#1)} 
\newcommand{\dg}[1]{ -- ++(#1,-#1)}

\newcommand{\ft}{\footnotesize}
\newcommand{\lu}{--++(1,1)}
\newcommand{\ls}{--++(1,0)}
\newcommand{\ld}{--++(1,-1)}

\newcommand{\tabox}{%
{\begin{tikzpicture}[x={(2.2pt,0)},y={(0,2.2pt)},baseline= -1.2pt]
\draw[line width=0.6pt] (0,0) -- (1,0) -- (1,1) -- (0,1) -- cycle;
\end{tikzpicture}}}

\title{Regular systems of paths and families of convex sets in convex position}

\author{Michael G. Dobbins}
\address{M. G. Dobbins \\ GAIA \\ Postech \\ Pohang \\ South Korea}
\email{dobbins@postech.ac.kr}

\author{Andreas F. Holmsen}
\address{A. F. Holmsen \\ Department of Mathematical Sciences \\ KAIST \\
  Daejeon \\ South Korea} 
\email{andreash@kaist.edu}

\author{Alfredo Hubard}
\address{A. Hubard \\ Laboratoire de l'Institut Gaspard Monge\\ Universit\'e Paris-Est Marne-la-Vall\'ee \\Paris \\ France}
\email{hubard@di.ens.fr}

\begin{document}

\begin{abstract} 
In this paper we show that every sufficiently large family of convex
bodies in the  plane has a large subfamily in convex position
provided that the number of common tangents of each pair of bodies is
bounded and every subfamily of size five is in convex position. (If
each pair of bodies have at most two common tangents it is enough to
assume that every triple is in convex position, and likewise, if
each pair of bodies have at most four common tangents it is enough to
assume that every quadruple is in convex position.) This confirms
a conjecture of Pach and T{\'o}th, and generalizes a theorem of
Bisztriczky and Fejes T\'{o}th. Our results on families of convex
bodies are consequences of more general Ramsey-type results about the
crossing patterns of systems of graphs of continuous functions $f :
[0,1]\to \mb{R}$. On our way towards proving the Pach-T\'{o}th conjecture we obtain a 
combinatorial characterization of such systems of graphs in which all
subsystems of equal size induce equivalent crossing patterns. These
highly organized structures are what we call {\em regular systems of
  paths} and they are natural generalizations of the notions of cups
and caps from the famous theorem of Erd\H{o}s and Szekeres. The characterization
of regular systems is combinatorial and introduces some auxiliary 
structures which may be of independent interest.
\end{abstract} 

\maketitle

\section{Introduction} Loosely speaking, Ramsey theory can be
summarized by the statement 
 ``any sufficiently large structure must contain
a highly organized substructure''. Typically, solving a Ramsey-type
problem consists of two parts. Firstly, the existence of a  ``highly
organized 
substructure'' is established, secondly a quantitative estimate of
``sufficiently large'' is given. This paper focuses on the first of
the two problems. Our results rely on a crude application of Ramsey's
theorem  and therefore we expect our bounds to be quantitatively
meaningless. On the other hand deciphering what ``highly organized''
means in the context of families of convex bodies turned out to be the
heart of the problem.

\subsection{The Erd\H{o}s--Szekeres theorem} 
Quantitative aspects of Ramsey theory were first considered in a
foundational paper by Erd\H{o}s and Szekeres. They were led to Ramsey's
theorem on their way to showing the following beautiful result
\cite{erd-sze1, erd-sze2}.  

\begin{theorem*}[Erd\H{o}s--Szekeres, 1935] For 
every positive integer $n$ there exists a minimal positive integer
$f(n)$ such that the following holds: Any set of at least $f(n)$
points in the plane such that no three are collinear, contains $n$
points which are in convex position. 
\end{theorem*}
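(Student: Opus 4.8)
The plan is to reduce the statement to a one‑dimensional monotonicity phenomenon via the notions of \emph{cups} and \emph{caps}. After a generic rotation we may assume all $x$-coordinates of the points are distinct, so that any subset can be listed as $p_1,\dots,p_m$ with strictly increasing $x$-coordinates. Call such a list a \emph{$k$-cup} if $m=k$ and the consecutive slopes $\operatorname{slope}(p_1p_2)<\operatorname{slope}(p_2p_3)<\cdots$ are strictly increasing, and a \emph{$k$-cap} if instead the consecutive slopes are strictly decreasing. Both a $k$-cup and a $k$-cap are sets of $k$ points in convex position, so it suffices to prove the finite assertion: there is a number $g(k,\ell)$ such that every set of at least $g(k,\ell)$ points in general position contains a $k$-cup or an $\ell$-cap. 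Indeed, one then gets $f(n)\le g(n,n)$, and minimality of $f(n)$ is automatic since the set of admissible thresholds is nonempty and bounded below.

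First I would prove the recursion $g(k,\ell)\le g(k-1,\ell)+g(k,\ell-1)-1$ for $k,\ell\ge 3$, with base cases $g(2,\ell)=g(k,2)=2$ (any two points form both a $2$-cup and a $2$-cap). Take a set $V$ of $g(k-1,\ell)+g(k,\ell-1)-1$ points containing no $\ell$-cap, and let $X\subseteq V$ be the set of points that occur as the rightmost point of some $(k-1)$-cup. If $|V\setminus X|\ge g(k-1,\ell)$, then $V\setminus X$ contains an $\ell$-cap (excluded) or a $(k-1)$-cup, whose rightmost point would lie in $X$ yet also in $V\setminus X$ — impossible; hence $|X|\ge g(k,\ell-1)$. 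Then $X$ contains a $k$-cup, in which case we are done, or an $(\ell-1)$-cap $q_1,\dots,q_{\ell-1}$; since $q_1\in X$ there is a $(k-1)$-cup $p_1,\dots,p_{k-2},q_1$ ending at $q_1$, and comparing $\operatorname{slope}(p_{k-2}q_1)$ with $\operatorname{slope}(q_1q_2)$ — these are unequal by general position — one case yields the $k$-cup $p_1,\dots,p_{k-2},q_1,q_2$ and the other yields the $\ell$-cap $p_{k-2},q_1,\dots,q_{\ell-1}$ (excluded). Either way we finish. Unwinding the recursion against the base cases gives $g(k,\ell)=\binom{k+\ell-4}{k-2}+1$, hence $f(n)\le\binom{2n-4}{n-2}+1$.

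The step I expect to be the main obstacle is precisely this slope‑comparison inside the inductive step: one must keep careful track of which endpoint of a cup (respectively cap) is being extended, and use the no‑three‑collinear hypothesis to exclude slope equalities so that exactly one of the two extensions always succeeds. An alternative, closer in spirit to the Ramsey‑theoretic viewpoint of the present paper, is to $2$-color the $4$-element subsets of the point set according to whether or not they are in convex position, use the classical facts that any five points in general position contain a convex quadrilateral and that a point set is in convex position as soon as every $4$-subset is, and then apply Ramsey's theorem for $4$-uniform hypergraphs; this produces an arbitrarily large subset all of whose $4$-subsets are convex, hence in convex position, at the cost of a far weaker but still finite bound on $f(n)$.
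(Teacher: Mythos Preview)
Your proposal is correct: the cups--caps recursion you set up is the classical Erd\H{o}s--Szekeres argument, and the slope comparison step is handled properly (the no-three-collinear hypothesis rules out equality, and $p_{k-2}$ automatically has smaller $x$-coordinate than all of the $q_i$, so the extended cap has $\ell$ distinct points). One minor imprecision: unwinding the recursion only gives $g(k,\ell)\le\binom{k+\ell-4}{k-2}+1$, not equality; the matching lower bound requires a separate construction, but for the existence of $f(n)$ the inequality is all you need.

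As for comparison with the paper: the paper does \emph{not} prove this theorem. It is stated as a classical background result with citation to \cite{erd-sze1,erd-sze2}, and the paper's own contributions are the generalizations (Theorems~\ref{main Erd Sze}, \ref{general cupscaps}, \ref{general ES}). The paper does acknowledge the cups--caps viewpoint --- Example~\ref{1reg} identifies the two regular $1$-crossing systems as duals of cups and caps, and the discussion after Theorem~\ref{general cupscaps} notes that the case $k=1$ is the dual cups--caps theorem with $C_1(n)=\binom{2n-4}{n-2}+1$ --- so your primary approach is exactly the one the authors have in mind when they invoke the classical result. Your alternative Ramsey-on-quadruples argument is also standard and, incidentally, closer in spirit to how the paper proves its own generalizations (via Ramsey on triples of paths, section~\ref{existence}).
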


We say that a set, or family of sets, is in \df{convex position} if no
element, or member, is contained in the convex hull of the
others. Determining the precise growth of $f(n)$ is one of the
longest-standing open problems in combinatorial geometry, and as such
it has generated a considerable amount of research. 
For every integer $n\geq 3$, Erd\H{o}s and Szekeres
\cite{erd-sze2} have constructed a set of $2^{n-2}$ points with no
three collinear points, which does not contain the vertices of any
convex $n$-gon. This example shows that the Erd\H{o}s--Szekeres
function $f(n)$ is strictly greater than $2^{n-2}$. (See \cite{morris}
for a simple description of this construction.) For $n \leq 6$ it is
known that $f(n) = 2^{n-2}+1$, and it is conjectured that this
equality holds for all $n$ \cite{erd-sze1, 
  erd-sze2, szekeres17}. For $n > 6$ the best known upper bound, due
to T{\'o}th and Valtr \cite{totval}, is $f(n) \leq \binom{2n-5}{n-2}+1
\sim \frac{4^n}{\sqrt{n}}$. Asymptotically this is the same as
the bound given by Erd\H{o}s and Szekeres in their seminal paper. For
more more information about the Erd\H{o}s--Szekeres problem, its
generalizations and related results, the reader should consult the
surveys \cite{BaraKaro,morris}.

\subsection{Families of convex bodies} Bisztriczky and Fejes
T\'{o}th \cite{biszFT1} extended the Erd\H{o}s--Szekeres theorem to
families of convex bodies in the plane. (Here a convex body means a compact
convex set.)

\begin{theorem*}[Bisztriczky--Fejes T\'{o}th, 1989]
For every positive integer $n$ there exists 
of a minimal positive integer $h_0(n)$ such that the following holds:
Any family of at least $h_0(n)$ pairwise disjoint convex bodies
in the plane such that any three are in convex position, contains
$n$ members which are in convex position. 
\end{theorem*}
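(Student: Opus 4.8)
The plan is to adapt the classical ``cups and caps'' proof of the Erd\H{o}s--Szekeres theorem, replacing the straight line through two points by a common tangent line of two bodies. Write the given family as $\{A_1,\dots,A_N\}$. Choosing a generic direction, project the bodies onto a line perpendicular to it so that the $2N$ endpoints of the resulting intervals are distinct (this discards only a measure-zero set of directions and does not move the bodies), and order the bodies $A_1\prec\cdots\prec A_N$ by the left endpoint of their projections, breaking any remaining ties by a fixed rule. Pairwise disjointness guarantees that each pair $A_i,A_j$ has two \emph{outer} common tangent lines --- lines with both bodies in one of the two closed half-planes they bound --- and we write $\ell_{ij}$ for the \emph{lower} outer tangent of $A_i,A_j$ with respect to the chosen direction; this line will play the role of the line through two points.

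The first step is to $2$-colour the triples of the family. For $i<j<k$, the three bodies $A_i,A_j,A_k$ are in convex position by hypothesis, and using disjointness one checks that the boundary of $\conv(A_i\cup A_j\cup A_k)$ meets the three bodies in arcs occurring in one of exactly two cyclic orders. Call the triple a \emph{cup} or a \emph{cap} according to which order occurs --- equivalently, according to whether $A_j$ lies ``below'' or ``above'' $\conv(A_i\cup A_k)$, which translates into a monotonicity, respectively anti-monotonicity, among the slopes of $\ell_{ij}$, $\ell_{jk}$, $\ell_{ik}$. (The hypothesis that every three members are in convex position is used precisely to make this colouring defined on all triples.) The second step is a crude application of Ramsey's theorem for $3$-uniform hypergraphs --- or, for better quantitative bounds, of the Erd\H{o}s--Szekeres cup--cap recursion: once $N$ reaches the relevant Ramsey number the family contains $n$ members all of whose triples are cups, or all of whose triples are caps. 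This already shows that a finite $h_0(n)$ with the required property exists, and the minimal one then exists by the well-ordering principle; by symmetry it remains to prove the third step: \textbf{a ``cup'' of $n$ pairwise disjoint convex bodies is in convex position.}

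For points a cup is in convex position because the slopes of its consecutive edges strictly increase, so that the points form a convex chain on the lower hull. The analogous statement here is that the cup condition on consecutive triples forces the slopes of $\ell_{12},\ell_{23},\dots,\ell_{n-1,n}$ to strictly increase, so that these tangent segments, together with arcs of the bodies, assemble into the lower boundary of $\conv(\bigcup_i A_i)$; each $A_i$ then contributes a boundary arc, and choosing a supporting line of $\conv(\bigcup_i A_i)$ touching only $A_i$ along that arc certifies $A_i\not\subseteq\conv(\bigcup_{j\ne i}A_j)$. I would carry this out by induction on $n$, adjoining $A_n$ on the right and using the cup condition on the non-consecutive triples $(A_i,A_j,A_n)$ to ensure that $A_n$ cannot wrap back over the chain and cover an earlier body.

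The step I expect to be the main obstacle is exactly this passage from the local, combinatorial cup condition on triples to the global conclusion that the whole subfamily is in convex position --- effectively, a ``transitivity'' of the cup structure. For points it is elementary slope arithmetic, but for convex bodies one must cope with the two tangent points on each body and exclude the wrap-around phenomenon; the two-type dichotomy used in the first step must also be justified. Pairwise disjointness is indispensable throughout: it is what keeps the outer common tangents and hence the cup/cap classification well behaved, and it is precisely what allows the weak hypothesis ``every three members in convex position'' --- rather than every four or five --- to suffice.
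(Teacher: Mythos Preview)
Your outline is the classical route to this theorem --- essentially the argument of Bisztriczky and Fejes T\'{o}th themselves, later sharpened by Pach and T\'{o}th --- and it is correct in structure; the step you flag as the obstacle (passing from the cup condition on all triples to convex position of the whole $n$-tuple) is indeed the crux, and is handled in those papers by arguments close to your inductive sketch. Note that the paper under review does not give an independent proof of this statement: it is quoted from the literature, and is recovered only as the disjoint special case of the paper's Theorem~1.1 (disjoint bodies have exactly two common supporting tangents in the sense used here, so $h_0(n)\le h_1(n)\le U_2(n)$).

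The paper's route to that more general result is genuinely different from yours. You stay in the primal plane, order the bodies by a projection, and two-colour triples as cups or caps via the slopes of lower outer tangent lines; then Ramsey (or the cup--cap recursion) produces a monochromatic $n$-tuple, and a geometric lemma shows it is in convex position. The paper instead dualizes each body $K$ to its support curve $\theta\mapsto g_K(\theta)$, obtaining a system of paths in which ``in convex position'' becomes ``upper convex''. Ramsey is then applied not to a two-colouring but to the finitely many possible \emph{signatures} of triples of paths, producing a \emph{regular} subsystem; the heart of the paper is a combinatorial characterization of regular systems (sections~3--6) that forces such a subsystem to be upper convex whenever all its size-$3$ subsystems are. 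Your approach is more elementary, stays closer to the geometry, and --- if you use the cup--cap recursion rather than raw Ramsey --- yields an exponential bound on $h_0(n)$; the paper's approach gives only Ramsey-tower bounds but extends uniformly to bodies sharing up to $2k$ common tangents, which is precisely the generalization the paper is after and which your tangent-slope argument cannot reach.
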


The case when the convex bodies are points shows that $f(n) \leq
h_0(n)$, but bounding $h_0(n)$ is considerably more
difficult. Nevertheless, Bisztriczky and Fejes T\'{o}th conjectured
that the two functions are equal, that is, $f(n) = h_0(n)$, which is
known to hold for all $n\leq 6$ (see \cite{DHH}). The original upper
bound on $h_0(n)$ was reduced to $16^n/n$  by Pach and T\'{o}th in
\cite{PachTothBodies}, and in a subsequent paper \cite{PachToth1} the
disjointness condition was relaxed. They showed that it is sufficient
to assume that each pair of bodies are {\em non-crossing}, which means
that each pair of bodies have precisely two {\em common supporting
  tangents}.\footnote{There are several other definitions of 
  non-crossing convex bodies, but our definition has some technical
  advantages, and more importantly, does not cause any loss of generality.} 
Let $h_1(n)$ denote the corresponding Erd\H{o}s--Szekeres
function for non-crossing bodies, which obviously satisfies $h_0(n) \leq
h_1(n)$. The upper bound of $h_1(n)$ was reduced in \cite{FPSS,
  hubsuk}, while the most substantial improvement was given by the
present authors in \cite{DHH} where it was shown that $h_1(n)\leq
\binom{2n-5}{n-2}+1$. Notice that this is the same bound as T\'{o}th
and Valtr's bound for the original Erd\H{o}s--Szekeres function.  

\subsection{The Pach--T\'{o}th conjecture} 
Pach and T\'{o}th  conjectured in \cite{PachToth1} that the non-crossing
condition could be further relaxed.
However, they constructed an
infinite family of segments in which every three members are in convex
position, but no four are, indicating that additional assumptions
are needed. (See \cite{PachToth1} for details.) Here we confirm the 
conjecture of Pach and T\'{o}th.\footnote{Their original
  formulation of generalizing the non-crossing condition is by
  bounding the number of common boundary points among any two
  bodies, but it is easily seen that this is implied by bounding the
  number of common supporting tangents.}

\begin{theorem} \label{main Erd Sze} For all integers $n > k \geq 1$,
  there exists a minimal positive integer $h_k(n)$ such that the
  following holds: Any family of at least $h_k(n)$ convex bodies in
  the plane such that any two have at most $2k$ common
  supporting tangents and any $m_k$ are in convex position,
  contains $n$ members which are in convex position, where $m_1=3$,
  $m_2 = 4$, and $m_k = 5$ for all $k\geq 3$.  
\end{theorem}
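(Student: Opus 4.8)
The plan is to reduce Theorem~\ref{main Erd Sze} to the Ramsey-type statements about systems of graphs of continuous functions that the abstract promises. First I would fix a generic direction in the plane, say the vertical direction, after a small perturbation ensuring no two bodies share a horizontal tangent line at the same height and that the bodies are in ``general position'' with respect to horizontal lines. Each convex body $C_i$ then has a left boundary arc and a right boundary arc between its lowest and highest points; parametrizing by height (rescaled to $[0,1]$) turns the two arcs into graphs of continuous functions $\up{f_i},\down{f_i}:[0,1]\to\mb R$. The hypothesis that any two bodies have at most $2k$ common supporting tangents translates into a bound on the number of crossings between the corresponding boundary curves, so the family of $2|\mathcal F|$ functions forms a system of graphs with bounded pairwise crossing number. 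The key dictionary entry to establish is: a subfamily of bodies is in convex position if and only if the corresponding subsystem of graphs induces a ``convex-type'' crossing pattern --- essentially the pattern of a system of nested or laminar cups/caps. This is where the combinatorial characterization of regular systems of paths enters.

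**Invoking the abstract Ramsey theorem.** Granting that dictionary, the statement becomes: any sufficiently large system of graphs of continuous functions with pairwise crossing number at most $2k$, in which every $m_k$-element subsystem induces a convex-type pattern, contains a large convex-type subsystem. I would obtain this by first applying Ramsey's theorem to the hypergraph whose vertices are the graphs (or pairs thereof) and whose colors record the isomorphism type of the crossing pattern induced on each small subset --- the number of such types is finite because the crossing number is bounded by $2k$. This yields a large ``homogeneous'' subsystem all of whose small subsystems look alike; then the characterization of regular systems of paths from the body of the paper identifies precisely which homogeneous types can occur, and the hypothesis that every $m_k$-subsystem is in convex position is exactly what rules out all the non-convex homogeneous types. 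The thresholds $m_1=3$, $m_2=4$, $m_k=5$ should fall out of a case analysis: with at most two common tangents one non-convex obstruction is already visible on triples, with at most four tangents the smallest obstruction needs a quadruple, and for $k\ge 3$ five points suffice and are necessary because of the Pach--T\'oth segment example (which shows four does not suffice).

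**The main obstacle.** The hard part will be the dictionary itself, specifically showing that ``every $m_k$-subfamily of bodies in convex position'' is strong enough to force the crossing pattern of \emph{every} small subsystem of graphs into convex-type form --- not merely that convex position of bodies corresponds to convex-type patterns, but that the \emph{local} hypothesis on bodies controls the \emph{local} crossing structure of curves tightly enough to feed Ramsey. One subtlety is that a pair of bodies with $2k$ common tangents produces boundary curves that may cross up to roughly $2k$ times, and two such curves can interleave in many combinatorial ways even when the bodies themselves are in convex position; sorting out which interleavings are compatible with convex position of all $m_k$-subsets, and proving that the compatible ones assemble into a global convex-type structure, is the technical heart. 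I expect this to be handled by the auxiliary structures mentioned in the abstract, which presumably encode, for each pair of curves, a ``reduced'' crossing word, and a consistency condition across triples/quadruples/quintuples that is equivalent to convex position --- with the regular-systems characterization then doing the bookkeeping to extract the large convex subfamily. A secondary obstacle is the perturbation/genericity argument: one must check that relaxing to at most $2k$ tangents (rather than exactly) and allowing tangencies causes no loss of generality, which the paper flags in its footnote but which still requires care for the curve model.
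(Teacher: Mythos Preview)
Your high-level strategy---reduce convex bodies to a system of curves with bounded pairwise crossings, apply Ramsey to find a large regular subsystem, then use the structural characterization of regular systems together with the local hypothesis to force the convex pattern---matches the paper exactly. The case breakdown $m_1=3$, $m_2=4$, $m_k=5$ also arises just as you anticipate, from an analysis of which paths can appear on the upper envelope of a regular system (the paper's Theorem~\ref{envelope theorem}).

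Where your proposal diverges is in the reduction itself, and this is a genuine gap. Parametrizing the left and right boundary arcs of each body by height does not give a usable system of paths: different bodies have different height ranges, and after you rescale each body's height interval to $[0,1]$ separately, a common supporting tangent of two bodies no longer corresponds to anything natural about their rescaled boundary curves. Your claim that ``at most $2k$ common tangents translates into a bound on the number of crossings between the corresponding boundary curves'' is not true under this parametrization, and the dictionary ``convex position $\leftrightarrow$ convex-type crossing pattern'' that you flag as the main obstacle has no clean formulation in this model. You would also be carrying two curves per body, which obscures the correspondence further.

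The paper sidesteps all of this with one device: the \emph{support function}. To each body $K$ it associates the single curve $K^*=\{(t,g_K(2\pi t)):0\le t\le1\}$ where $g_K(\theta)=\max_{p\in K}\langle\theta,p\rangle$. Now every curve is defined on the same interval; crossings of $K_1^*$ and $K_2^*$ are in bijection with common supporting tangents of $K_1$ and $K_2$ (so the $2k$ bound transfers directly); and---this is the clean dictionary---a subfamily of bodies is in convex position if and only if every dual curve appears on the upper envelope of the corresponding subsystem, i.e.\ the subsystem is \emph{upper convex}. With that, Theorem~\ref{main Erd Sze} is a one-line consequence of the curve-system result (Theorem~\ref{general ES}), giving $h_k(n)\le U_{2k}(n)$. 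The perturbation issues you raise are handled by a standard compactness argument and do not require the height parametrization at all.
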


Surprisingly, $m_k$ does not grow as $k$ tends to infinity. The family
of segments given by Pach and T\'{o}th shows that the bound for $m_2$
cannot be reduced, and it will follow as a simple consequence of our
analysis that for $k\geq 3$ the bound for $m_k$ cannot be reduced. In
particular, there exists arbitrarily large families of convex bodies
such that
\begin{itemize}
\item any two members have precisely {\em six} common tangents,
\item any four members are in convex position, and 
\item no five members are in convex position. 
\end{itemize}

The construction will be given in section \ref{remark4}.

\subsection{Systems of paths}
 A \df{path} is the graph of a continuous function $f: [0,1] \to
 \mb{R}$, drawn on the vertical strip $[0,1]\times
 \mb{R}$.\footnote{What we call a path is often referred to as an
   $x$-monotone curve, and our terminology is used mainly for the sake
   of brevity.}  

\begin{define}
A \df{system of paths} is a finite collection of at least three paths
which satisfy the following conditions: 

\vbox{
\begin{itemize}
\item The paths have distinct endpoints.
\item The intersection of any pair of paths is finite. 
\item No two paths are tangent; paths cross at every 
  point where they intersect.  
\item The intersection of any three paths is empty.
\end{itemize}}
\end{define}

For brevity, we refer to a systems of paths simply as {\em
  systems}. 
We say a system is \df{$k$-crossing} when each pair of paths cross 
{\em at least 1 time and at most $k$ times}.
The \df{size} of a system $S$ is the number of
paths in the system and is denoted by $|S|$. By taking a subset of at
least three paths of a system we obtain a \df{subsystem}. A path
belongs to the \df{upper envelope} of a system if at some point in
$[0,1]\times \mb{R}$ it appears {\em above} every other path of the
system. The lower envelope is defined similarly. The system is called
\df{upper convex} if all its paths appear on the upper envelope, and
\df{lower convex} if all its paths appear on the lower envelope.  
  
\begin{remark}
  A $1$-crossing system can be viewed as a {\em simple} pseudoline
  arrangement with a distinguished horizontal direction, and in fact
  every simple pseudoline arrangement can be represented in this way
  by a ``wiring diagram'' (see section 5.1 of \cite{goody} or section
  6.3 of \cite{OMS}). 
\end{remark}

Our proof of the conjecture of Pach and T\'{o}th applies a
combinatorial analysis of $k$-crossing systems to a correspondence
between upper convex systems and subfamilies in convex position. The
main result of this paper provides an answer to the following
Ramsey-type question: {\em Does every sufficiently large $k$-crossing
  system contain a large subsystem which is upper convex (or lower
  convex)?}    

\begin{theorem} \label{general cupscaps}
For any integers $k\geq 1$ and $n\geq 3$, there exists a minimal positive
integer $C_k(n)$ such that the following holds. Every $k$-crossing
system $S$ of size at least $C_k(n)$ contains a subsystem of size $n$
which is upper convex or lower convex.  
\end{theorem}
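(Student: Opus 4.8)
The plan is to proceed by a double induction, closely following the structure of the Erdős–Szekeres cups-and-caps argument, but with the role of ``cups'' and ``caps'' replaced by the notions of upper convex and lower convex subsystems. Concretely, I would define an auxiliary Ramsey-type function $C_k(a,b)$ with the property that every $k$-crossing system of size at least $C_k(a,b)$ contains an upper convex subsystem of size $a$ \emph{or} a lower convex subsystem of size $b$; then $C_k(n) = C_k(n,n)$ finishes the theorem. The base cases $C_k(3,b)$ and $C_k(a,3)$ are handled by the observation that in any $k$-crossing system, any three paths whose pairwise crossing patterns all look the same will be either upper convex or lower convex; more carefully, one first applies Ramsey's theorem to the ``coloring'' that records, for each triple of paths, which of the finitely many $k$-crossing order types on three paths it realizes, so that inside a large system we may assume all triples are of the same type. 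Since there are only finitely many $3$-path $k$-crossing types (the number of crossings is bounded, the endpoints are ordered, and the intersection of three paths is empty), this is a legitimate application of Ramsey.

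The inductive step is where the combinatorial heart lies. Given a $k$-crossing system $S$ that is ``homogeneous'' in the above sense and of size at least $C_k(a-1,b) + C_k(a,b-1) - 1$, I would look at the paths that can serve as the ``first'' path of a maximal upper convex subsystem and the paths that can serve as the ``first'' path of a maximal lower convex subsystem, and argue by a pigeonhole/extremal argument that some path must either extend a long upper convex subsystem or extend a long lower convex subsystem. The subtlety — and the main obstacle I anticipate — is that for $k \geq 2$ two paths may cross several times, so ``appearing above'' or ``appearing below'' is not a transitive or even well-defined global relation the way it is for pseudolines; a path can be above another on one interval and below it on another. This means the naive cups-caps induction, which relies on the total order given by ``which curve is on top,'' does not transfer directly. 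I expect to need an intermediate structural result — presumably the combinatorial characterization of regular systems advertised in the abstract — to control how the crossing pattern of a homogeneous large system looks, so that one can still extract a monotone-type structure. In particular, after Ramsey-homogenization, the crossing pattern of $S$ should be rigid enough that the relevant ``stacking'' relation on paths behaves like a partial order with bounded width, and then Dilworth or a direct greedy chain argument yields the long upper convex or lower convex chain.

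Finally, I would assemble the pieces: fix $k$, run Ramsey once to make all bounded-size subsystems homogeneous (it suffices to homogenize subsystems up to some constant size depending only on $k$, since the $k$-crossing types of bounded systems are finite in number), then run the cups-caps-style double induction on the homogenized system to produce either an upper convex or a lower convex subsystem of size $n$. Tracking the quantitative bound would give a tower-type estimate for $C_k(n)$ because of the Ramsey step, which is consistent with the paper's stated expectation that the bounds are ``quantitatively meaningless.'' The one step I am least sure about is exactly how much homogenization is needed before the stacking relation becomes a bounded-width partial order; settling that is, I believe, precisely what forces the introduction of the auxiliary structures mentioned in the abstract, and it is the step I would expect to occupy most of the technical work.
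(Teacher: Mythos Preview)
Your Ramsey step is exactly what the paper does: color each triple of paths by its signature (there are only finitely many $k$-crossing signatures on three paths), and extract a large subsystem in which all triples have the same signature. In the paper's language this subsystem is called \emph{regular}, and the bound is $C_k(n)\le R_3(n;M_k)$.

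The gap is in what you do afterwards. You propose a cups--caps style double induction on the regular system, anticipating that the structural characterization will be needed to make a ``stacking'' partial order behave well enough for a Dilworth-type chain argument. This is not what the paper does, and it is not needed. The paper proves a much stronger statement (Theorem~\ref{UL envelope}): \emph{every} regular system is already upper convex or lower convex in its entirety. Once you have applied Ramsey with target size $n$, the resulting regular subsystem of size $n$ is itself the desired upper convex or lower convex subsystem; no further induction, no $C_k(a,b)$, no partial-order argument. All the work goes into proving Theorem~\ref{UL envelope}, which is where the characterization of regular systems (the associated word $\omega\in W_{(ab,cd)}$, the factorization $\sigma=\alpha\cdot\beta\cdot\gamma$, and the parity analysis of Lemma~\ref{parity-cases}) is used.

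So your plan is not wrong so much as misdirected: you correctly identify both the Ramsey step and the need for the structural characterization, but you slot the characterization into an Erd\H{o}s--Szekeres induction that the paper bypasses entirely. Trying to carry out your induction directly would be painful precisely for the reason you name (no global above/below relation for $k\ge 2$), and the paper's route avoids that pain by proving the all-at-once envelope statement instead.
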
 

The case $k=1$ is a dual version of the well-known {\em cups-caps}
theorem of Erd\H{o}s and Szekeres \cite{erd-sze1}. In this case the
precise value of $C_1(n)$ is known and equals $\binom{2n-4}{n-2} +
1$. It is not hard to see that $C_k(3)=3$ for all $k$, and clearly
$C_k(n) \leq C_{k+1}(n)$, but as far as we know equality may hold for
all $k$. Our bound on $C_k(n)$ is in terms of certain Ramsey numbers
and we suspect  it to be very far from the truth. 

Unfortunately, Theorem \ref{general cupscaps} does not seem to be
applicable towards the conjecture of Pach and T\'{o}th. For this we need a
``one-sided'' version guaranteeing the existence of large {\em upper
  convex} subsystem, which requires the hypothesis to be strengthened
accordingly.  

\begin{theorem} \label{general ES}
For any integers $k\geq 1$ and $n\geq 3$, there exists a minimal positive
integer $U_k(n)$ such that the following holds.  
For any $k$-crossing system $S$ of size at least $U_k(n)$:
\begin{enumerate}
\item If $k\leq 2$ and every subsystem of size $3$ is upper convex,
  then $S$ contains an upper convex subsystem of size $n$. 
\item If $k\leq 4$ and every subsystem of size $4$ is upper convex,
  then $S$ 
  contains an upper convex subsystem of size $n$.
\item If $k\geq 5$ and every subsystem of size $5$ is upper convex,
  then $S$ 
  contains an upper convex subsystem of size $n$.
\end{enumerate}
\end{theorem}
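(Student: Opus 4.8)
The plan is to reduce to the case of a \emph{regular} system by a Ramsey argument, and then to apply the structural characterization of regular systems. Fix $k$ and let $t$ be the size appearing in the relevant clause of the theorem, that is $t=3$ if $k\le 2$, $t=4$ if $3\le k\le 4$, and $t=5$ if $k\ge 5$. For any fixed size there are only finitely many equivalence classes of $k$-crossing systems of that size, so one may colour the $r$-element subsystems of $S$ by their crossing pattern, where $r=r(k)$ is a constant large enough that the characterization of regular systems can be read off from the $r$-subsystems. Applying Ramsey's theorem, there is a function $U_k(n)$ so that whenever $|S|\ge U_k(n)$ the system $S$ contains a subsystem $S'$ with $|S'|=n$ all of whose $r$-subsystems are equivalent; iterating this over the sizes $3,\dots,r$ (or invoking the characterization directly) we may arrange that $S'$ is a regular system. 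Since every subsystem of $S'$ is also a subsystem of $S$, the local hypothesis is inherited by $S'$: every $t$-element subsystem of $S'$ is upper convex. Note that Theorem~\ref{general cupscaps} is of no help here, since it only delivers a subsystem that is upper \emph{or} lower convex, and a small system can well be both.

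It then suffices to establish the following local-to-global principle for regular systems: \emph{a regular $k$-crossing system all of whose $t$-element subsystems are upper convex is itself upper convex}, with $t$ as above. Granting this, $S'$ is an upper convex subsystem of $S$ of size $n$, as required. The degenerate range $n<t$ needs only the trivial observation that if $|S|\ge t$ then any $t$ of its paths form an upper convex system by hypothesis, and a subsystem of an upper convex system is upper convex because deleting paths can only push the upper envelope downward; so we may keep any $n$ of them.

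The genuine obstacle is the local-to-global principle, and it is here that the constants $3$, $4$, $5$ are forced. Using the combinatorial description of regular systems one lists, for each $k$, the finitely many types of a sufficiently large regular $k$-crossing system, and for each type records which small subsystems are upper convex and whether the whole system is. For $k\le 2$ the regular types are essentially the cups and caps of Erd\H{o}s and Szekeres together with a few tame variants, and the hypothesis at size $3$ already excludes every type that is not upper convex. As $k$ grows, a short additional list of ``twisted'' regular types appears; the point --- this is the stabilization remarked upon after Theorem~\ref{main Erd Sze} --- is that this list no longer changes once $k\ge 5$, so that the fixed threshold $t=5$ keeps eliminating all non-upper-convex regular types for every $k\ge 5$, while $t=4$ already does so for $k=3,4$ and $t=3$ for $k\le 2$. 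That the thresholds cannot be lowered is shown by exhibiting regular systems whose $(t-1)$-element subsystems are all upper convex but which are not upper convex: for $k\ge 3$ these are the families constructed in section~\ref{remark4}, and for $k=2$ the path analogue of the Pach--T\'oth family of segments. Assembling the cases $k\le 2$, $3\le k\le 4$, $k\ge 5$ and taking $U_k(n)$ to be the Ramsey number used in the reduction yields the theorem.
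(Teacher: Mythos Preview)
Your high-level strategy --- extract a regular subsystem by Ramsey, then argue that a regular system satisfying the local upper-convexity hypothesis must itself be upper convex --- is exactly the paper's. Two points, one minor and one substantive.

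For the Ramsey step you overcomplicate matters. Regularity is a condition on triples only (all size-$3$ subsystems share the same signature), so a single application of $R_3(n;M_k)$, coloring triples by their signature, yields a regular $S'$ of size $n$ directly. There is no need for an auxiliary $r=r(k)$ or for iterating over sizes.

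The substantive gap is in your ``local-to-global principle''. You propose to establish it by listing, for each $k$, the finitely many regular types and checking them, and you assert that ``this list no longer changes once $k\ge 5$''. That is false: the number of extendable signatures, and hence of regular types, grows without bound as $k\to\infty$. What stabilizes is not the list of types but the \emph{conclusion} about the envelope. The paper states this as Theorem~\ref{envelope theorem}: a regular system that is not upper convex has at most $2$, $3$, or $4$ paths on its upper envelope according as $k\le 2$, $k\le 4$, or $k\ge 5$. From this the contradiction is one line: if at most $t-1$ paths lie on the upper envelope of $S'$, take those together with any other path of $S'$; the extra path is everywhere dominated by one of the envelope paths, so this $t$-subset is not upper convex. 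Proving Theorem~\ref{envelope theorem} is the real content (sections~\ref{sec:SeqTab}--\ref{evlps}): one factors the signature into irreducible pieces, classifies each piece as odd, even, or mixed, and shows by a parity argument (Lemma~\ref{parity-cases}) that the envelope structure depends only on this coarse classification --- which is precisely what makes the bound uniform in $k$. A per-$k$ case check cannot replace this, and the ``stabilization remarked upon after Theorem~\ref{main Erd Sze}'' refers to $m_k$ stabilizing, not to the list of regular types.
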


Our bound on $U_k(n)$ is the same as on $C_k(n)$, and again we
suspect it to be far from the truth. 

We are in position to prove the Pach--T\'{o}th conjecture. 

\begin{proof}[Proof of theorem \ref{main Erd Sze}] 
For a convex body $K$, its support function $g_K\colon \mb{S}^1 \to
\mb{R}^1$ is defined as \[g_K(\theta) := \max_{p\in K}\langle \theta,
p\rangle.\] (Here $\langle \cdot , \cdot\rangle$ denotes the usual
Euclidean inner product.) Using this map we associate the body $K$
with a dual support path $K^*$ given by \[K^* = \{ (t, g_K(2\pi
t) ) \: : \: 0\leq t \leq 1\}.\] In this way a family $F$ of convex
bodies can be associated with a dual system of paths $F^*$, where the
common supporting tangents of pairs of bodies of $F$ are in bijective
correspondence with the intersection points between pairs of dual
curves of $F^*$. In general, $F^*$ may not be a
system, but the members of $F$ may be perturbed in such a way that
there are no tangential intersections and no triple intersections
among of the paths in $F^*$. By standard compactness arguments, this
can be done without increasing the number of common supporting
tangents and without changing which subfamilies are in convex
position. The paths of $F^*$ may be assumed to have distinct
endpoints as well. Since any two members of $F$ are in convex position, they must have at least two common supporting tangents. Consequently, since 
each pair of members of $F$ have at most $2k$ common supporting
tangents, then each pair of paths in $F^*$ cross at least twice and
at most $2k$ times. The key observation is that a subfamily of $F$ is
in convex position if and only if the corresponding subsystem of $F^*$
is upper convex. Consequently, $h_k(n) \leq U_{2k}(n)$. 
\end{proof}

\subsection{Outline of the paper}

The proofs of Theorems \ref{general cupscaps} and \ref{general ES} are
purely combinatorial and essentially boil down to a simple parity
argument: If path $i$ starts below path $j$, then every time path $i$
appears above path $j$, they must have crossed an odd number of
times. This is of course just the final punchline, and
a large part of this paper is devoted to developing the appropriate
combinatorial machinery to make this formal. 

In section \ref{sec:CombProp} we translate the problem into
combinatorial terms. The most crucial notion is that of the {\em local
  sequences} of a system. This is an encoding of the ``crossing
pattern'' of the system, and it records, for each path, the order in
which it meets the other paths. This encoding allows us to formally
define what we mean by a ``highly organized substructure'' in a system
of paths, which we refer to as {\em regular systems}. It is a simple
consequence of Ramsey's theorem that every sufficiently large system
of paths contains a large regular subsystem. The main technical
results of this paper give detailed descriptions of the envelopes of a
regular system. This is the content of Theorems \ref{UL envelope} and
\ref{envelope theorem}, and it is easily seen that these results imply
Theorems \ref{general cupscaps} and \ref{general ES}.  

The difficulty that arises when trying to analyze regular systems is witnessed by the fact 
that the number of distinct regular $k$-crossing systems grows very
rapidly with $k$. The proofs of Theorems \ref{UL envelope} and
\ref{envelope theorem} span over sections \ref{sec:SeqTab} -- \ref{evlps}, 
and actually contain a complete characterization of
all regular systems. Such a characterization is obtained by
reformulating the notion of a regular system in terms of certain
properties concerning {\em sequences} on an {\em ordered
  alphabet}. The discussion is organized by dividing these properties
into combinatorial and geometric ones.   

In section \ref{sec:SeqTab} we treat the combinatorial
properties. Here we introduce the notion of a {\em tableau} which is a
sequence of sequences on an ordered alphabet. The connection to our 
problem is that the local sequences of a system gives rise to a
tableau (but we do not require the converse to be true). We further
define {\em regular tableaux} which are a combinatorial abstraction of
regular systems. The main result of this section is the bijective
correspondence established in Corollary \ref{tab-corr1} which
characterizes all regular tableaux. Our results in this section are
completely elementary and somewhat technical. The notion of regular
tableaux seems natural, but we are unaware of connections with
previously studied structures.

In section \ref{sec:GeomTab} we treat the geometric properties. That
is, we give conditions for when a tableau corresponds to the local
sequences of a system of paths. These results follow standard
arguments, most of which were introduced in the study of arrangements
of lines and pseudolines.  

In section \ref{sec:Kara} we are ready to characterize the regular
systems. This essentially amounts to describing the intersection of
the combinatorial properties of section \ref{sec:SeqTab} and the
geometric properties of section \ref{sec:GeomTab}. As a consequence we establish the basic structure of the local sequences of any regular system.

In section \ref{evlps} we apply the characterization of regular systems with the  aforementioned parity argument to
describe the upper and lower envelopes. 

In section \ref{remark4} we conclude with some final remarks and open
problems. 

All the necessary notions are elementary and will be formally introduced along
the way. As usual, the set of natural numbers is denoted by $\mb{N}$,
and the finite set $\{1,2,\dots,n\}$ is denoted by $[n]$.

\section{Combinatorial properties of  systems of paths} \label{sec:CombProp} 


\subsection{Local sequences} Let $S$ be a system and let
$A \subset \mb{N}$ with $|A|=|S|$. 
Label the paths of
$S$ by the elements of $A$ according to the order of their left
endpoints from bottom to top. We say that $S$ is \df{labeled} by
$A$. Throughout the rest of this paper we will always assume
that a system is labeled by an increasing sequence of positive integers.  

\begin{define} \label{locals} Let $S$ be a system
  labeled by $A \subset \mb{N}$. The \df{local sequence} of path $i$
  is the sequence 
  on $A \setminus \{i\}$ which records the order in which path $i$
  intersects the other paths of the system as it is traversed from
  left to right. 
\end{define}

\begin{example} The figure below shows a system of paths labeled by
  $[4]$.

\begin{center}
  \begin{tikzpicture}
    \begin{scope}[scale = .38]      
      \draw[blue!60!black!30!cyan] (0,3) \ls\ls\ls\ls\ds\ld\ld\de\ls;
      \draw[blue!70!black!50!cyan] (0,2) \ls\ds\ld\de\ls\ls\us\lu\ue;
      \draw[blue!80!black!70!cyan] (0,1) \ds\de\us\lu\lu\ue\ls\ls\ls;
      \draw[blue!90!black!90!cyan] (0,0) \us\lu\ue\ds\de\us\ue\ds\de;
      \node[left] at (0,0) {\ft $1$};
      \node[left] at (0,1) {\ft $2$};
      \node[left] at (0,2) {\ft $3$};
      \node[left] at (0,3) {\ft $4$};
    \end{scope}
  \end{tikzpicture}
\end{center}

This gives us the corresponding local sequences.

\begin{center}
  \begin{tikzpicture}
    \begin{scope}
      \node[left] at (0,2) {\ft Path 1 :};
      \node[left] at (0.5,2) {\ft 2};
      \node[left] at (1,2) {\ft 3};
      \node[left] at (1.5,2) {\ft 2};
      \node[left] at (2,2) {\ft 4};
      \node[left] at (2.5,2) {\ft 3};
     \end{scope}

    \begin{scope}[yshift = .6cm]
      \node[left] at (0,2) {\ft Path 2 :};
      \node[left] at (0.5,2) {\ft 1};
      \node[left] at (1,2) {\ft 3};
      \node[left] at (1.5,2) {\ft 1};
      \node[left] at (2,2) {\ft 4};
     \end{scope}

    \begin{scope}[yshift = 1.2cm]   
      \node[left] at (0,2) {\ft Path 3 :};
      \node[left] at (0.5,2) {\ft 1};
      \node[left] at (1,2) {\ft 2};
      \node[left] at (1.5,2) {\ft 4};
      \node[left] at (2,2) {\ft 1};
     \end{scope}

     \begin{scope}[yshift = 1.8cm]   
      \node[left] at (0,2) {\ft Path 4 :};
      \node[left] at (0.5,2) {\ft 2};
      \node[left] at (1,2) {\ft 1};
      \node[left] at (1.5,2) {\ft 3};
     \end{scope}

  \end{tikzpicture}
\end{center}

\end{example}

For a finite set $A\subset \mb{N}$ and $i\in A$, let $A_i^- = \{j\in
A \: :\: j < i\}$ and $A_i^+= \{j\in A\: : \: j > i\}$. The following lemma implies that the upper and lower envelopes of a system can be
determined from the local sequences of its 
paths. 

\begin{lemma}\label{local-envelope}
  Let $S$ be a system labeled by $A$. Path $i$
  appears on the upper envelope of $S$ if and only if there is an
  initial string of its local sequence which contains every element of
  $A_i^+$ an odd number of times and every element of $A_i^-$ an even
  number of times. (The same holds for the lower 
  envelope by reversing the roles of $A_i^+$ and $A_i^-$.) 
  \end{lemma}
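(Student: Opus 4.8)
The plan is to traverse path $i$ from left to right while tracking, for each other path $j$, whether path $i$ currently lies above or below path $j$. This above/below relation flips exactly at the crossings of $i$ with $j$, and those crossings are precisely the occurrences of the letter $j$ in the local sequence of path $i$; reading off the parities at the moment when path $i$ reaches the upper envelope gives the statement.

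Concretely, I would write $f_\ell\colon[0,1]\to\mb{R}$ for the function whose graph is path $\ell$. Since the labels increase with the heights of the left endpoints, we have $f_j(0)<f_i(0)$ for $j\in A_i^-$ and $f_j(0)>f_i(0)$ for $j\in A_i^+$. The analytic core is the following: for $j\ne i$ and $x_0\in[0,1]$ that is not the $x$-coordinate of a crossing of $i$ with $j$, the sign of $f_i(x_0)-f_j(x_0)$ equals $(-1)^{c}$ times the sign of $f_i(0)-f_j(0)$, where $c$ is the number of crossings of $i$ and $j$ with $x$-coordinate in $(0,x_0)$. This holds because $f_i-f_j$ is continuous, nonzero off its finite zero set (whose elements are exactly the crossings of $i$ and $j$), and changes sign at each of its zeros — here we use that, by definition of a system, the paths are not tangent and genuinely cross wherever they meet. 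Hence $f_i(x_0)>f_j(x_0)$ iff $c$ is even when $j\in A_i^-$, and iff $c$ is odd when $j\in A_i^+$.

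Next I would match prefixes of the local sequence with positions along path $i$. Because no three paths meet at a common point, all crossings encountered along path $i$ have distinct $x$-coordinates, so the local sequence of $i$ is a genuine linear sequence, and its length-$\ell$ prefixes correspond bijectively to the ``times'' $x_0$ lying strictly between consecutive crossing $x$-coordinates of path $i$ — the empty prefix corresponding to $x_0$ to the left of the first crossing (e.g. $x_0=0$) and the full sequence to $x_0$ to the right of the last crossing (e.g. $x_0=1$, which is legitimate since endpoints are distinct). For such an $x_0$ and each $j$, the number of occurrences of the letter $j$ in the corresponding prefix is exactly the number $c$ of crossings of $i$ and $j$ in $(0,x_0)$, and $f_i(x_0)\ne f_j(x_0)$ for all $j$, so path $i$ is on the upper envelope at abscissa $x_0$ iff $f_i(x_0)>f_j(x_0)$ for every $j\ne i$.

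Combining these gives both implications. If path $i$ is on the upper envelope, witnessed at a point of abscissa $x_0$, then $x_0$ is automatically not a crossing $x$-coordinate of path $i$ (at a crossing $f_i$ coincides with some $f_j$), so it lies strictly between two consecutive crossings of path $i$ (or before the first, or after the last); the associated prefix then contains every element of $A_i^+$ an odd number of times and every element of $A_i^-$ an even number of times, by the sign computation. Conversely, given such a prefix, pick $x_0$ strictly between the corresponding consecutive crossings of path $i$ — if the prefix is empty, the parity condition forces $A_i^+=\emptyset$, i.e. $i=\max A$, and $x_0=0$ works — and the parity counts force $f_i(x_0)>f_j(x_0)$ for all $j\ne i$, so path $i$ is on the upper envelope. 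The lower-envelope statement follows by the same argument with the roles of $A_i^-$ and $A_i^+$ (equivalently, of ``above'' and ``below'') interchanged. The only subtleties, none of them deep, are getting this prefix/non-crossing-time correspondence right (which relies on the no-triple-intersection hypothesis so that the local sequence is a single well-defined sequence), verifying that $f_i-f_j$ changes sign at \emph{every} crossing (which relies on transversality), and dispatching the degenerate cases of the empty prefix and of $x_0$ at an endpoint.
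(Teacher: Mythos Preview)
Your proof is correct and follows the same parity argument as the paper's proof: path $i$ is above path $j$ at a given abscissa if and only if the number of $\{i,j\}$-crossings to the left has the appropriate parity relative to the initial ordering. The paper sketches this in two sentences, while you have carefully filled in the analytic details (transversality giving genuine sign changes of $f_i-f_j$, the no-triple-point hypothesis ensuring the local sequence is well-defined, and the prefix/interval correspondence including the endpoint cases), but the underlying idea is identical.
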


\begin{proof}
  Suppose $i < j$. This means that path $i$ starts below
  path $j$. Therefore, every time path $i$ appears above path
  $j$, the two paths should have crossed an odd number of
  times. Similarly, every time path $j$ appears above path $i$, they
  should have crossed an even number of times.
\end{proof}

\subsection{Signatures of systems of size $3$}
In our proofs of Theorems \ref{general cupscaps} and \ref{general
  ES} the systems on three paths play an important role. 
In this case the crossings of the system are linearly ordered. (For
systems of size greater than 3 we generally only have a partial
ordering of the crossings.) Here we
introduce a combinatorial signature which records this ordering.

\begin{define} \label{signatures}
Let $S$ be a system labeled by $\{i_1,i_2,i_3\}\subset
\mb{N}$ where 
$i_1 < i_2 < i_3$. 
The \df{signature} of $S$ is the word $\sigma = \sigma(S)$
on the alphabet $\{x,y,z\}$ which records the linear ordering of the
crossings of $S$ by the rules 
\[\{i_1,i_2\}\text{-crossing} \mapsto x \;\; , \;\; \{i_1,i_3\}\text{-crossing}
\mapsto y \;\; , \;\; \{i_2,i_3\}\text{-crossing} \mapsto z \]
\end{define}

\begin{remark}
  We consider the alphabet $\{x,y,z\}$ to be ordered $x \prec y \prec
  z$. This will be crucial later on. Notice that this just corresponds
  to the lexicographical ordering of the pairs $(i_1,i_2)$,
  $(i_1,i_3)$, $(i_2,i_3)$, but introducing letters $x$, $y$, $z$
  simplifies the notation.  
\end{remark}

\begin{remark} \label{sign=local}
  The signature of a system of size 3 is a compact way of encoding the
  local sequences of the system. Let $S$ be a system labeled by $[3]$
  with signature $\sigma$. Let $\sigma_{_{\{x,y\}}}$ denote the word
  obtained by deleting the letter $z$ from $\sigma$. If we replace
  each $x$ by $2$ and each $y$ by $3$ in $\sigma_{_{\{x,y\}}}$, then
  we obtain the local 
  sequence of path 1. This follows from the definition of the
  signature. Similarly, let $\sigma_{_{\{x,z\}}}$ and
  $\sigma_{_{\{y,z\}}}$ be the words obtained by deleting the letters
  $y$ and $x$ from $\sigma$, respectively. Replacing each $x$ by $1$
  and each $z$ by $3$ in $\sigma_{_{\{x,z\}}}$ gives us the local
  sequence of path 2, and replacing each $y$ by $1$ and each $z$ by
  $2$ in $\sigma_{_{\{y,z\}}}$ gives us the local sequence of path $3$.  
\end{remark}

\begin{example}
  The system depicted in the figure below
  has signature $\sigma =  xy^3zx^2z^2$.

\begin{center}
\begin{tikzpicture}
\begin{scope}[scale = .38]      

\node at (1,-1.5) {\ft $x$};
\node at (2,-1.55) {\ft $y$};
\node at (3,-1.55) {\ft $y$};
\node at (4,-1.55) {\ft $y$};
\node at (5,-1.5) {\ft $z$};
\node at (6,-1.5) {\ft $x$};
\node at (7,-1.5) {\ft $x$};
\node at (8,-1.5) {\ft $z$};
\node at (9,-1.5) {\ft $z$};
\node[right] at (9.5,-1.5) {\ft $=\: \: xy^3zx^2z^2$};

\draw[gray!30!white, -latex] (1,0) --(1,-1);
\draw[gray!30!white, -latex] (2,1) --(2,-1);
\draw[gray!30!white, -latex] (3,1) --(3,-1);
\draw[gray!30!white, -latex] (4,1) --(4,-1);
\draw[gray!30!white, -latex] (5,0) --(5,-1);
\draw[gray!30!white, -latex] (6,1) --(6,-1);
\draw[gray!30!white, -latex] (7,1) --(7,-1);
\draw[gray!30!white, -latex] (8,0) --(8,-1);
\draw[gray!30!white, -latex] (9,0) --(9,-1);

\draw[blue!70!black!30!cyan] (0,2) \ls\ds\du\ud\ld\de\ls\us\ud\de;
\draw[blue!80!black!60!cyan] (0,1) \ds\de\ls\ls\us\lu\ud\ld\du\ue;
\draw[blue!90!black!90!cyan] (0,0) \us\lu\ud\du\ue\ds\du\ue\ls\ls;
\node[left] at (0,0) {\ft $1$};
\node[left] at (0,1) {\ft $2$};
\node[left] at (0,2) {\ft $3$};
\end{scope}
\end{tikzpicture}
\end{center}

The words $\sigma_{_{\{x,y\}}}$, $\sigma_{_{\{x,z\}}}$, and
$\sigma_{_{\{y,z\}}}$ give us the following 
local sequences.

\begin{center}
  \begin{tikzpicture}
    \begin{scope}
      \node[left] at (-2,1.95) {\ft $\sigma_{_{\{x,y\}}} = xy^3x^2$};
      \draw[-latex] (-1.9,2) -- (-1.3,2);      
      \node[left] at (0,2) {\ft Path 1 :};
      \node[left] at (0.5,2) {\ft 2};
      \node[left] at (1,2) {\ft 3};
      \node[left] at (1.5,2) {\ft 3};
      \node[left] at (2,2) {\ft 3};
      \node[left] at (2.5,2) {\ft 2};
      \node[left] at (3,2) {\ft 2};
     \end{scope}

    \begin{scope}[yshift = .6cm]
      \node[left] at (-2,1.95) {\ft $\sigma_{_{\{x,z\}}} = xzx^2z^2$};
      \draw[-latex] (-1.9,2) -- (-1.3,2);      
      \node[left] at (0,2) {\ft Path 2 :};
      \node[left] at (0.5,2) {\ft 1};
      \node[left] at (1,2) {\ft 3};
      \node[left] at (1.5,2) {\ft 1};
      \node[left] at (2,2) {\ft 1};
      \node[left] at (2.5,2) {\ft 3};
      \node[left] at (3,2) {\ft 3};
     \end{scope}

    \begin{scope}[yshift = 1.2cm]
      \node[left] at (-2,1.95) {\ft $\sigma_{_{\{y,z\}}} = y^3z^3$};
      \draw[-latex] (-1.9,2) -- (-1.3,2);      
      \node[left] at (0,2) {\ft Path 3 :};
      \node[left] at (0.5,2) {\ft 1};
      \node[left] at (1,2) {\ft 1};
      \node[left] at (1.5,2) {\ft 1};
      \node[left] at (2,2) {\ft 2};
      \node[left] at (2.5,2) {\ft 2};
      \node[left] at (3,2) {\ft 2};
     \end{scope}
  \end{tikzpicture}
\end{center}
\end{example}

\begin{prop} \label{parity}
  Let $S$ be a system of size 3 with signature $\sigma$. Suppose
  $u,v,w \in \{x,y,z\}$ where $u \neq v$ and 
  $w\neq v$. The
  following hold. 
  \begin{enumerate}
  \item For $\sigma =u^pv \cdots $ we have $u \in \{x,z\}$ and $v=y
    \iff p$ is odd.
  \item  For $\sigma  = \cdots u v^p w\cdots$ we have $u = w \iff p$ is even
  \end{enumerate}
\end{prop}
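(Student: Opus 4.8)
The plan is to encode the signature of a size-$3$ system by a single sequence recording its \emph{middle path}, and then read off both parts as short parity computations.

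\medskip

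\noindent\emph{Setting up the middle-path sequence.} Let $S$ be labeled by $\{1,2,3\}$. Since no two paths are tangent, no three paths meet, and each pair meets finitely often, no two crossings occur at the same $x$-coordinate, so the crossings of $S$ are linearly ordered and between consecutive crossings the three paths have a well-defined vertical order, a permutation of $(1,2,3)$ read from bottom to top. Let $m_0,m_1,m_2,\dots$ be the \emph{middle} entry of this order in the successive regions of the strip; then $m_0=2$, because $S$ is labeled by the bottom-to-top order of left endpoints. The crucial point is that a transversal crossing with no third path squeezed between the two crossing paths must swap two \emph{vertically adjacent} paths, and among three paths an adjacent pair is $\{\mathrm{bottom},\mathrm{middle}\}$ or $\{\mathrm{middle},\mathrm{top}\}$; hence the $\ell$-th crossing involves $m_{\ell-1}$, and after the swap the other participant becomes the new middle. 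Thus $m_\ell\neq m_{\ell-1}$, and by Definition~\ref{signatures} the $\ell$-th letter of $\sigma$ is exactly the letter coding the pair $\{m_{\ell-1},m_\ell\}$ (with $\{1,2\}\mapsto x$, $\{1,3\}\mapsto y$, $\{2,3\}\mapsto z$). In short, $\sigma$ corresponds to a sequence $m_0,m_1,\dots$ starting at $2$ with distinct consecutive terms, and the letter of a run records which two middles the run cycles through.

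\medskip

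\noindent\emph{Part (1).} Write $\sigma=u^pv\cdots$ with $v\neq u$. Since $m_0=2$, the first letter $u$ codes a pair containing $2$, so $u\in\{x,z\}$; and the run $u^p$ forces $m_0,\dots,m_p$ to alternate between $2$ and the fixed third label of that pair, so $m_p=2$ exactly when $p$ is even. Now $v$ codes $\{m_p,m_{p+1}\}$ with $m_{p+1}\neq m_p$ and $v\neq u$. If $p$ is even then $m_p=2$, so $v$ again codes a pair containing $2$ and hence $v\neq y$. If $p$ is odd then $m_p\in\{1,3\}$; of its two containing pairs one codes $u$ (this is how $m_p$ arose) and the other codes $y$, so $v=y$. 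Since $u\in\{x,z\}$ holds in all cases, this gives ``$u\in\{x,z\}$ and $v=y$'' $\iff$ $p$ is odd.

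\medskip

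\noindent\emph{Part (2).} Write $\sigma=\cdots uv^pw\cdots$ with $u\neq v$, $w\neq v$; let $v$ code the pair $\{a,b\}$ and let $c$ be the third label. Indexing so that the run is $\sigma_{\ell+1}=\cdots=\sigma_{\ell+p}=v$ with $\sigma_\ell=u$ and $\sigma_{\ell+p+1}=w$, the equalities $\sigma_{\ell+j}=v$ say $\{m_{\ell+j-1},m_{\ell+j}\}=\{a,b\}$, so together with distinctness of consecutive terms the entries $m_\ell,m_{\ell+1},\dots,m_{\ell+p}$ alternate in $\{a,b\}$; hence $m_{\ell+p}=m_\ell$ exactly when $p$ is even. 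Since $u\neq v$ and $m_\ell\in\{a,b\}$, the pair $\{m_{\ell-1},m_\ell\}$ coding $u$ cannot equal $\{a,b\}$, which forces $m_{\ell-1}=c$; likewise $m_{\ell+p+1}=c$. Therefore $u$ codes $\{c,m_\ell\}$ and $w$ codes $\{c,m_{\ell+p}\}$, and since distinct pairs code distinct letters, $u=w\iff m_\ell=m_{\ell+p}\iff p$ is even. The only delicate step in the whole argument is the first one — extracting the middle-path sequence and checking, from the axioms of a system (transversality, no triple points, finitely many intersections, distinct endpoints), that the vertical order is well defined between crossings and that each crossing changes the middle to the other path it involves; after that, parts (1) and (2) are routine parity bookkeeping, so I expect no further obstacle.
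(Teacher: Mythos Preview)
Your proof is correct and follows essentially the same approach as the paper: both track the ``middle path'' through the sequence of crossings and observe that each crossing swaps the current middle with one of its neighbors, reducing both parts to parity bookkeeping. Your version makes this explicit via the sequence $m_0,m_1,\dots$ with $\sigma_\ell$ coding $\{m_{\ell-1},m_\ell\}$, whereas the paper argues the same thing more tersely in prose; the content is identical.
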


\begin{proof}
  The first claim of {\em (1)} is obvious: The first crossing
  which occurs must involve path $i_2$, since a crossing always
  involves the path which is currently in the middle, and $i_2$ starts
  in the middle. Path $i_2$ is in the
  middle after the first $p$ crossings if and only $p$ is even, hence,
  the $(p+1)$'st crossing is an $\{i_1,i_3\}$-crossing if and only if
  $p$ is odd. For part {\em (2)}, suppose the $u$ corresponds to an
  $\{i,j\}$-crossing, where $j$ is the path which is in the middle
  {\em after} this crossing occurs. This means that the next $p$ crossings
  which correspond to $v^p$ involve path $j$. The $w$ corresponds to an
  $\{i,k\}$-crossing and suppose $k$ is the path which is in the middle
  {\em before} this crossing occurs. Clearly, $k=j$
  if and only if $p$ is even.    
\end{proof}

\begin{remark}
  It is not hard to verify Proposition \ref{parity} characterizes the
  set of signatures. That is, any word on the alphabet $\{x,y,z\}$
  which satisfies the conditions of Proposition \ref{parity} corresponds 
  to the signature of a system of size 3. We leave the
  proof to the reader.
\end{remark}

\subsection{Regular systems} 

\begin{define} \label{SR}
A system $S$ is \df{regular} if it has size at least 4
and the signatures $\sigma(T) = \sigma(T') \neq \emptyset$ for all
subsystems $T$ and 
$T'$ of size 
3. The unique signature of the subsystems of size 3 is called the
\df{signature} of the regular system.
\end{define}

\begin{remark}
  Note that in a regular system each pair of paths must cross the
  same number of times and that any subsystem of size at least
  4 is also regular.
\end{remark}

\begin{example}\label{1reg}
  There are precisely two distinct signatures of 1-crossing systems of
  size 3, and for any $n\geq 4$ there exists regular systems of size
  $n$ with these signatures. The system below on the left is regular
  with signature $xyz$, while the system below on the right is regular
  with signature $zyx$.   

\medskip

\begin{center}
\begin{tikzpicture}[scale=.23]
\begin{scope}
\draw[blue!90!black!90!cyan](0,5)\ls\ls\ls\ds\ld\ld\ld\de;
\draw[blue!90!black!90!cyan](0,4)\ls\ls\ds\ld\ld\de\us\ue;
\draw[blue!90!black!90!cyan](0,3)\ls\ds\ld\de\us\lu\ue\ls;
\draw[blue!90!black!90!cyan](0,2)\ds\de\us\lu\lu\ue\ls\ls;
\draw[blue!90!black!90!cyan](0,1)\us\lu\lu\lu\ue\ls\ls\ls;
\end{scope}
\begin{scope}[xshift=18cm, yscale = -1, yshift = -6cm]
\draw[blue!90!black!90!cyan](0,5)\ls\ls\ls\ds\ld\ld\ld\de;
\draw[blue!90!black!90!cyan](0,4)\ls\ls\ds\ld\ld\de\us\ue;
\draw[blue!90!black!90!cyan](0,3)\ls\ds\ld\de\us\lu\ue\ls;
\draw[blue!90!black!90!cyan](0,2)\ds\de\us\lu\lu\ue\ls\ls;
\draw[blue!90!black!90!cyan](0,1)\us\lu\lu\lu\ue\ls\ls\ls;
\end{scope}
\end{tikzpicture}
\end{center}

One can think of these systems as the dual to what Erd\H{o}s and
Szekeres call cups and caps \cite{erd-sze1}, and their result states
that any 1-crossing system of size $\binom{2n-4}{n-2}+1$ contains a
regular subsystem of size $n$.\end{example}
 
\begin{example}\label{stronglyreg}
The figure below shows a system $S$ of size 4 together with its 4
subsystems of size 3. It is easily seen that each subsystem of size 3
has signature $\sigma = xy^2x^3z^2xyz^2y^2z$.  Therefore $S$ is a
regular system. 

\medskip

\begin{center}
\begin{tikzpicture}[scale=.31]
\begin{scope}[xshift=.5cm]
\draw[blue!60!black!30!cyan](0,4)\ls\ls\ds\du\ue\ls\ls\ls\ls\ds\du\ue\ls\ls\ls
\ls\ls\ds\du\ue\ls\ls\ds\ld\ld\du\lu\lu\ud\ld\ld\de;
\draw[blue!70!black!50!cyan](0,3)\ls\ds\de\ls\us\ue\ls\ls\ds\de\ls\us\ue\ds
\ld\du\lu\lu\ud\ld\ld\de\ls\ls\us\ud\de\ls\ls\ls\us\ue;
\draw[blue!80!black!70!cyan](0,2)\ds\de\ls\ls\ls\us\ud\du\lu\lu\ud\ld\ld\de
\us\ud\de\ls\ls\ls\us\ue\ls\us\ue\ls\ds\de\ls\us\ue\ls; 
\draw[blue!90!black!90!cyan](0,1)\us\lu\lu\ud\ld\ld\du\ud\de\ls\ls\ls\us\lu
\ue\ls\ds\de\ls\us\ue\ls\us\ue\ls\ls\ls\ds\du\ue\ls\ls;
\end{scope}

\begin{scope}[xshift=-2cm]
\begin{scope}[xscale=.9, scale=.6, yshift=-7cm, xshift=-2cm]
\draw (0,4)[white!88!black, dotted]\ls\ls\ds\du\ue\ls\ls\ls\ls\ds\du\ue\ls\ls\ls
\ls\ls\ds\du\ue\ls\ls\ds\ld\ld\du\lu\lu\ud\ld\ld\de;
\draw[blue!70!black!50!cyan] (0,3)\ls\ds\de\ls\us\ue\ls\ls\ds\de\ls\us\ue\ds
\ld\du\lu\lu\ud\ld\ld\de\ls\ls\us\ud\de\ls\ls\ls\us\ue;
\draw[blue!80!black!70!cyan](0,2)\ds\de\ls\ls\ls\us\ud\du\lu\lu\ud\ld\ld\de
\us\ud\de\ls\ls\ls\us\ue\ls\us\ue\ls\ds\de\ls\us\ue\ls; 
\draw[blue!90!black!90!cyan](0,1)\us\lu\lu\ud\ld\ld\du\ud\de\ls\ls\ls\us\lu
\ue\ls\ds\de\ls\us\ue\ls\us\ue\ls\ls\ls\ds\du\ue\ls\ls;
\end{scope}

\begin{scope}[xscale=.9, scale=.6, yshift=-13cm, xshift=-2cm]
\draw[blue!60!black!30!cyan](0,4)\ls\ls\ds\du\ue\ls\ls\ls\ls\ds\du\ue\ls\ls\ls
\ls\ls\ds\du\ue\ls\ls\ds\ld\ld\du\lu\lu\ud\ld\ld\de;
\draw(0,3)[white!88!black, dotted]\ls\ds\de\ls\us\ue\ls\ls\ds\de\ls\us\ue\ds
\ld\du\lu\lu\ud\ld\ld\de\ls\ls\us\ud\de\ls\ls\ls\us\ue;
\draw[blue!80!black!70!cyan](0,2)\ds\de\ls\ls\ls\us\ud\du\lu\lu\ud\ld\ld\de
\us\ud\de\ls\ls\ls\us\ue\ls\us\ue\ls\ds\de\ls\us\ue\ls; 
\draw[blue!90!black!90!cyan](0,1)\us\lu\lu\ud\ld\ld\du\ud\de\ls\ls\ls\us\lu
\ue\ls\ds\de\ls\us\ue\ls\us\ue\ls\ls\ls\ds\du\ue\ls\ls;
\end{scope}

\begin{scope}[xscale=.9, scale=.6, xshift=36cm, yshift=-7cm]
\draw[blue!60!black!30!cyan](0,4)\ls\ls\ds\du\ue\ls\ls\ls\ls\ds\du\ue\ls\ls\ls
\ls\ls\ds\du\ue\ls\ls\ds\ld\ld\du\lu\lu\ud\ld\ld\de;
\draw[blue!70!black!50!cyan](0,3)\ls\ds\de\ls\us\ue\ls\ls\ds\de\ls\us\ue\ds
\ld\du\lu\lu\ud\ld\ld\de\ls\ls\us\ud\de\ls\ls\ls\us\ue;
\draw(0,2)[white!88!black, dotted]\ds\de\ls\ls\ls\us\ud\du\lu\lu\ud\ld\ld\de
\us\ud\de\ls\ls\ls\us\ue\ls\us\ue\ls\ds\de\ls\us\ue\ls; 
\draw[blue!90!black!90!cyan](0,1)\us\lu\lu\ud\ld\ld\du\ud\de\ls\ls\ls\us\lu
\ue\ls\ds\de\ls\us\ue\ls\us\ue\ls\ls\ls\ds\du\ue\ls\ls;
\end{scope}

\begin{scope}[xscale=.9, scale=.6, xshift=36cm, yshift=-13cm]
\draw[blue!60!black!30!cyan](0,4)\ls\ls\ds\du\ue\ls\ls\ls\ls\ds\du\ue\ls\ls\ls
\ls\ls\ds\du\ue\ls\ls\ds\ld\ld\du\lu\lu\ud\ld\ld\de;
\draw[blue!70!black!50!cyan](0,3)\ls\ds\de\ls\us\ue\ls\ls\ds\de\ls\us\ue\ds
\ld\du\lu\lu\ud\ld\ld\de\ls\ls\us\ud\de\ls\ls\ls\us\ue;
\draw[blue!80!black!70!cyan](0,2)\ds\de\ls\ls\ls\us\ud\du\lu\lu\ud\ld\ld\de
\us\ud\de\ls\ls\ls\us\ue\ls\us\ue\ls\ds\de\ls\us\ue\ls; 
\draw(0,1)[white!88!black, dotted]\us\lu\lu\ud\ld\ld\du\ud\de\ls\ls\ls\us\lu
\ue\ls\ds\de\ls\us\ue\ls\us\ue\ls\ls\ls\ds\du\ue\ls\ls;
\end{scope}

\end{scope}
\end{tikzpicture}
\end{center}
\end{example}

\begin{example}
The figure below shows a system of size 3 with signature $\sigma =
xy^2x^4yz^4y^2z$. 

\medskip

\begin{center}
\begin{tikzpicture}[scale=.3]
\begin{scope}[xscale=.93]
\draw[blue!90!black!30!cyan](0,3)\ls\ds\du\ue\ls\ls\ls\ds\ld\du\ud\du
\lu\ud\ld\du\ue;
\draw[blue!75!black!60!cyan](0,2)\ds\de\ls\us\ud\du\ud\de\us\ud\du\ud
\de\ls\us\ud\de;  
\draw[blue!50!black!80!cyan](0,1)\us\lu\ud\ld\du\ud\du\lu\ue\ls\ls\ls
\ds\du\ue\ls\ls;
\end{scope}
\end{tikzpicture}
\end{center}

As we will see, there exists no regular system with signature
$\sigma$. This particular example will be revisited in Example \ref{special}.  
\end{example}

\subsection{The existence of large regular
  systems} \label{existence} For any fixed integer $k\geq 1$, it follows
from Ramsey's theorem that every sufficiently large $k$-crossing
system contains a large subsystem which is regular. The argument goes 
as follows. Let $R_3(n;M)$ denote the {\em symmetric Ramsey number} for
$M$-partitions of the edge set of the complete 3-uniform
hypergraph. In other words, for $N\geq R_3(n;M)$, every partition of
the triples of $[N]$ into at most $M$ classes, there exists
a subset $A\subset [N]$, with $|A|\geq n$, such that every
triple of $A$ belongs to the same class. For every $k \geq 1$, there is a
finite number of signatures of $k$-crossing systems of size 3. Let
$M_k$ denote this number. Let $S$ be a 
$k$-crossing system of size $N\geq R_3(n;M_k)$. 
If we partition the subsystems of size 3 according to their signature,
Ramsey's theorem implies that $S$ contains a regular system
of size $n$. 

\subsection{Envelopes of regular systems}
In a regular system, a path will typically cross all paths above or
below it going directly to an envelope, and if this happens for one
path, the same happens for all paths. This behavior can be seen in Example \ref{stronglyreg} and results in either very few or all paths appearing on the upper or lower envelopes. This is made precise in Theorems  \ref{UL envelope} and \ref{envelope theorem}, below, and since every sufficiently large $k$-crossing system contains a large subsystem which is regular (by the argument in section \ref{existence}), these statements easily imply Theorems \ref{general cupscaps} and \ref{general ES}.

\begin{theorem} \label{UL envelope}
Every regular system is upper convex or lower convex. \end{theorem}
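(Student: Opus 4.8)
### Proof proposal for Theorem \ref{UL envelope}

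The plan is to prove the contrapositive of a slightly stronger statement: if a regular system $S$ fails to be upper convex, then it is lower convex, and I will try to read everything off the common signature $\sigma$ of $S$. The basic tool is Lemma \ref{local-envelope}, which reduces appearance on an envelope to a parity condition on initial strings of local sequences, together with the translation between the signature of a size-$3$ subsystem and the local sequences of its three paths (Remark \ref{sign=local}) and the parity dictionary in Proposition \ref{parity}. Since all size-$3$ subsystems of $S$ have the same signature $\sigma$, whatever ``envelope behavior'' is forced on the bottom, middle, and top path of one triple is forced on the corresponding paths of every triple; the game is to check that these forced behaviors are globally consistent, i.e.\ that they cannot demand that some path be strictly interior.

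First I would analyze the three ``roles'' inside a triple. Fix a size-$3$ subsystem labeled $i_1<i_2<i_3$ with signature $\sigma$. By Lemma \ref{local-envelope}, path $i_1$ is on the upper envelope of the triple iff some initial string of its local sequence contains $i_2$ and $i_3$ each an odd number of times; using Remark \ref{sign=local} this is a statement about initial strings of $\sigma$ counting $x$'s and $y$'s. Likewise for $i_3$ on the lower envelope, and for $i_2$ on either envelope. The key first-crossing fact (Proposition \ref{parity}(1)) — that the first letter of $\sigma$ is $x$ or $z$, and that whether a run of $u$'s is followed by a $y$ is governed by parity — lets me classify $\sigma$ into a small number of types according to which of $i_1$, $i_2$, $i_3$ lie on which envelope of the triple. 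I expect exactly two ``extreme'' behaviors to survive as candidates for a regular system: the cup-type (all three on the upper envelope) and the cap-type (all three on the lower envelope), mirroring Example \ref{1reg}; the point is to show any mixed behavior in a triple propagates to a contradiction with regularity.

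Next I would globalize. Suppose for contradiction that $S$ is neither upper nor lower convex, so there is a path $a$ missing from the upper envelope of $S$ and a path $b$ missing from the lower envelope of $S$ (possibly $a=b$). Missing from the upper envelope of $S$ is at least as strong as missing from the upper envelope of some triple containing $a$ — in fact, by Lemma \ref{local-envelope} the parity condition for $a$ in $S$ restricts to the parity condition for $a$ in any triple, so $a$ is also off the upper envelope of every triple in which it plays the corresponding role; and symmetrically for $b$. Combined with the triple classification, this pins down $\sigma$ to a type in which, say, the bottom path of every triple is off the upper envelope. Then I would run the dual argument with $b$ to pin $\sigma$ to a conflicting type, or else show directly that such a $\sigma$ cannot be the common signature of a system of size $\ge 4$ — this is exactly the kind of non-realizability illustrated by the last displayed example in the excerpt (``there exists no regular system with signature $\sigma$''), and Proposition \ref{parity} provides the parity obstructions needed to rule it out.

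The main obstacle, I expect, is the globalization step rather than the triple analysis: a path can be on the upper envelope of every triple it belongs to and still, a priori, fail to be on the upper envelope of the whole system, because the initial string witnessing ``$A_i^+$ odd'' for the full label set need not be compatible with the witnessing initial strings for the various triples simultaneously. Handling this cleanly will require using regularity more forcefully — not just that all triples share a signature, but the structural consequences of that (as developed later via local sequences, tableaux, and Theorem \ref{envelope theorem}), to show that the crossings along a fixed path $i$ with the paths above it are organized into a single ``sweep'' whose parity is controlled. Concretely, I would argue that in a regular system the local sequence of path $i$, read up to the moment it first reaches the top, hits every higher-labeled path an odd number of times and every lower-labeled path an even number of times, or the analogous statement with top replaced by bottom — and that which of the two alternatives occurs is the same for all $i$, being dictated by $\sigma$. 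Once that uniform ``sweep lemma'' is in hand, Lemma \ref{local-envelope} immediately gives that either all paths are on the upper envelope or all are on the lower envelope, which is the theorem.
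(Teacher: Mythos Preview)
Your restriction argument is backwards. You write that ``missing from the upper envelope of $S$ is at least as strong as missing from the upper envelope of some triple,'' and then that ``$a$ is also off the upper envelope of every triple in which it plays the corresponding role.'' But Lemma~\ref{local-envelope} gives the opposite implication: if $a$ is on the upper envelope of $S$, the witnessing initial string restricts to a witness in every subsystem, so $a$ is on the upper envelope of every triple; contrapositively, being off the upper envelope of \emph{some} triple forces $a$ off the upper envelope of $S$. The implication you need---from ``off in $S$'' to ``off in some triple''---simply does not follow, and you later correctly note exactly this obstruction (``a path can be on the upper envelope of every triple \ldots\ and still fail to be on the upper envelope of the whole system''). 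So the triple-classification step cannot carry the argument.

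Your proposed remedy, the ``sweep lemma,'' is the right target: it is essentially the content of the theorem, and the paper proves precisely this as Lemma~\ref{parity-cases}. But establishing it is not a matter of squeezing more out of Proposition~\ref{parity}; the paper needs the full machinery of Sections~\ref{sec:SeqTab}--\ref{sec:Kara}. Concretely, one first shows that the local sequences of a regular system form a regular tableau $T_n^\omega$ for a word $\omega\in W_{(ab,cd)}$ (Proposition~\ref{tab-corr}), then characterizes exactly which $\omega$ arise from \emph{geometric} regular tableaux (Proposition~\ref{kara} and Lemma~\ref{omega-form}), and finally reads off the envelope parities from the explicit shape $\omega=(a^pb^p)\cdot\delta\cdot(c^qd^q)$ via the maps $\phi_{i,n}$. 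The factorization into irreducible signatures (Proposition~\ref{factorize}) then reduces the general case to this analysis. Without this structural description of the local sequences there is no evident way to produce, for a general path $i$, an initial string meeting all of $[n]_i^+$ oddly and all of $[n]_i^-$ evenly---the information in $\sigma$ alone only controls pairs, and the globalization issue you flag is genuine. Your proposal defers exactly the hard part.
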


\begin{proof}[Proof of Theorem \ref{general cupscaps}]
  By the argument in section \ref{existence},
  if $S$ is a $k$-crossing system of size at least $R_3(n;M_k)$,
  then $S$ contains a subsystem $S'$ of size $n$ which is
  regular. By Theorem \ref{UL envelope}, $S'$ is
  upper convex or lower convex. Therefore $C_k(n)\leq R_3(n;M_k)$.
\end{proof}

\bigskip

\begin{theorem}\label{envelope theorem}
Let $S$ be a regular system. 
\begin{enumerate}
\item If $S$ is $2$-crossing, then $S$ is upper convex or
  only $2$ paths appear on the upper envelope of $S$.  

\item If $S$ is $4$-crossing, then $S$ is upper convex or at
most $3$ paths appear on the upper envelope of $S$. 

\item If $S$ is $k$-crossing for $k>4$, then $S$ is upper
  convex or at most $4$ distinct paths appear on the upper
envelope of $S$.
\end{enumerate}
\end{theorem}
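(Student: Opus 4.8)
The plan is to read Theorem \ref{envelope theorem} off an explicit description of the local sequences of a regular system together with the parity criterion of Lemma \ref{local-envelope}; the conceptual core is the slogan from the introduction --- if path $i$ starts below path $j$ it can lie above $j$ only after an odd number of crossings --- and the work is in making it quantitative. By Theorem \ref{UL envelope} a regular system is upper convex or lower convex; if $S$ is upper convex there is nothing to prove, so assume $S$ is lower convex but not upper convex. Since $S$ is regular, every pair of paths crosses the same number $c$ of times, and the three cases of the theorem are the ranges $c\le 2$, $c\in\{3,4\}$, and $c\ge 5$, with claimed upper-envelope bounds $2$, $3$, and $4$ respectively.

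The first --- and by far the largest --- component of the plan is to pin down the local sequences. Regularity already forces, through Remark \ref{sign=local}, that the restriction of the local sequence $L_i$ of any path $i$ to any two of the other paths is determined by the signature $\sigma$ of $S$: two paths above $i$ are met by $i$ in the pattern $\sigma_{_{\{x,y\}}}$, two paths below in the pattern $\sigma_{_{\{y,z\}}}$, and a below/above pair in the pattern $\sigma_{_{\{x,z\}}}$, always with the smaller label (respectively the lower path) playing the smaller letter. Thus $L_i$ is a shuffle of these fixed pairwise words, and the requirement that a coherent shuffle exist for every path and be simultaneously compatible over all triples and quadruples of paths, together with the parity conditions of Proposition \ref{parity}, is a severe restriction on $\sigma$ and on the shuffles. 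Carrying this out is exactly the classification of regular systems developed in Sections \ref{sec:SeqTab}--\ref{sec:Kara} (the ``tableau'' analysis and its geometric realizability); the output I would aim to extract is a normal form in which each path $i$ sweeps across the paths on a given side of it directly and in a fixed order, so that $L_i$ breaks into a bounded number of blocks --- one for $A_i^-$ and one for $A_i^+$ per pass --- with the additional passes forced by larger $c$ appended in a controlled way at the ends of these blocks, and with the vertical reflection (which exchanges the two envelopes and conjugates $\sigma$ by $x\leftrightarrow z$) reducing the bookkeeping to one orientation.

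With such a normal form in hand, the bound becomes a parity count. By Lemma \ref{local-envelope}, path $i$ lies on the upper envelope precisely when some prefix $\pi$ of $L_i$ meets every element of $A_i^+$ an odd number of times and every element of $A_i^-$ an even number of times. The top path always qualifies (empty prefix), and when $c$ is odd the bottom path qualifies (the full prefix meets each higher path $c$ times). For any other $i$ one checks, block by block, that attaining the state ``$A_i^+$ all odd, $A_i^-$ all even'' forces $\pi$ to exhaust essentially all of $L_i$; the residual parities on $A_i^-$ are then pinned down by $c$ and by the letter counts of $\sigma$ and come out wrong unless $|A_i^-|$ --- equivalently $i$ --- is small. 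Tallying the slack that the doubled, tripled, \dots{} crossings contribute yields exactly the surviving counts $2$, $3$, $4$, and the decisive point is that this slack saturates: longer signatures create no new near-extremal positions, so the bound stays at $4$ for all $c\ge 5$. Sharpness is witnessed, up to reflection, by the small sporadic regular systems (including the signature revisited in Example \ref{special}) and by the six-tangent family of Section \ref{remark4}.

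The step I expect to be the main obstacle is the classification itself: the number of regular $k$-crossing systems grows rapidly with $k$, so extracting a workable normal form for the $L_i$ is the real difficulty and appears to require the entire tableau machinery of Sections \ref{sec:SeqTab}--\ref{sec:Kara}. By contrast the concluding parity count is routine once the normal form is available; its one delicate aspect is the uniformity of the bound --- that it does not grow beyond $4$ once $c\ge 5$ --- which I would establish by checking that the arithmetic of the parity count stops improving at that point, equivalently that a single path missing from the upper envelope already forces all but a bounded ($c$-independent) number of paths to miss it.
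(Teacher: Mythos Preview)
Your high-level plan --- use the classification of regular systems to obtain a normal form for the local sequences, then apply Lemma \ref{local-envelope} as a parity test --- is the same strategy as the paper's. But the proposal is too vague at the crucial juncture, and the one concrete mechanism you describe for the parity count is not the one that actually works.

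The paper does not argue directly on the whole local sequence $L_i$. It first factors the signature $\sigma = \sigma_1\cdots\sigma_m$ into irreducible pieces (Proposition \ref{factorize}), so that the upper envelope of $S$ is the union of the upper envelopes contributed by the individual $\sigma_j$. Each irreducible piece is then classified (Lemma \ref{parity-cases}) as \emph{odd}, \emph{even}, or \emph{mixed}: a mixed piece already makes $S$ upper convex; an odd piece contributes exactly $\{1,n\}$; an even piece contributes the two currently topmost paths, which are $\{n-1,n\}$ or $\{1,2\}$ according to the parity of the number of odd pieces preceding it. The bounds $2,3,4$ then fall out by elementary arithmetic on how many odd and even pieces can fit under a crossing budget of $2$, $4$, or arbitrary $k$: two pieces of different parity cost at least $1+2=3$ crossings, two even pieces separated by an odd piece cost at least $2+1+2=5$, and that is what forces the jumps at $k=2$ and $k=4$.

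Your sketch misses this decomposition and instead asserts that the parity constraints force a surviving path $i$ to have small $|A_i^-|$, i.e.\ small $i$. That is not correct: path $n-1$ (large $i$) can appear on the upper envelope whenever an even piece sits at the top, and indeed the surviving set is always a subset of $\{1,2,n-1,n\}$, not of $\{1,2,3,4\}$. Without the irreducible/odd/even/mixed trichotomy you have no handle on which side of the system an even block addresses, and the ``residual parities come out wrong unless $i$ is small'' heuristic does not pin down the right paths. A smaller point: Example \ref{special} is a \emph{non}-extendable signature, so it does not give a regular system and cannot witness sharpness; the sharpness examples are the regular systems built in Section \ref{remark4} and those implicit in Lemma \ref{parity-cases}.
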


\begin{proof}[Proof of Theorem \ref{general ES}] 
  If $S$ is a $k$-crossing system of size at least $R_3(n;M_k)$, then $S$ 
  contains a regular subsystem $S'$ of size $n$. 
   For {\em (1)} of Theorem \ref{general ES} suppose $S$ is
  $2$-crossing. By {\em (1)} of Theorem \ref{envelope
    theorem}, $S'$ is upper convex or only 2 paths 
  appear on the upper envelope of $S'$. The latter case is
  impossible: If only 2 paths appear on the upper envelope, then $S'$
  contains a subsystem of size 3 which is not upper convex (take 
  the two paths from the upper envelope together with any other path
  of $S'$). This contradicts the hypothesis. Similarly, {\em
    (2)} and {\em (3)} of Theorem \ref{general ES} are implied by
   {\em (2)} and {\em (3)} of Theorem \ref{envelope theorem},
  respectively. Therefore $U_k(n) \leq R_3(n;M_k)$. 
\end{proof}

\section{Sequences and Tableaux} \label{sec:SeqTab}

\subsection{Regular Sequences} \label{2regs}

For a totally ordered alphabet $A$, a word is a finite sequences of letters in $A$, and a language is a set of words.  We will generally refer to words on $A \subset \mb{N}$ as sequences. 
Let $A^\star$ denote the language of all words on $A$, and for a language $L$, let $L^\star$ denote the language of all words formed by concatenating words in $L$.
For words $\omega$ and $\psi$ in $A^\star$, let $\omega \cdot \psi$ denote the concatenation of $\omega$ and $\psi$, 
let $|\omega|$ denote the length of $\omega$, and
let $[\omega]$ denote the  set of distinct letters appearing in $\omega$. 
For $a \in A$, let $a^i$ be the word $a\cdots a$ with $|a^i| = i$.
For a subset $X \subset A$, let $\omega_{_X}$ denote the subword of $\omega$ consisting of letters in $X$. We call this the \df{restriction} of $\omega$ to $X$. 
Define a map $N \colon A^\star \to \mb{N}^\star$ as follows. For $\omega \in A^\star$ with $[\omega] = \{a_1, \cdots, a_k\}$ where $a_i < a_{i+1}$, let $N(\omega)$ denote the sequence in $\mb{N}^\star$ obtained by the map $a_i \mapsto i$. For instance, $N(4, 2, 6, 6, 7) =  (2, 1, 3, 3, 4)$. Two words $\omega$ and $\omega'$ are \df{order equivalent} if $N(\omega) = N(\omega')$, in which case we write ${\omega \sim \omega'}$. Note that order equivalence is an equivalence relation. 
The map $N$ and the notion of order equivalence naturally extend to words which are not defined on the same alphabets. For instance, if $A$ is the Latin alphabet then $N({\tt h\; e\; l\; l\; o}) = (2,1,3,3,4)$, so
$(4, 2, 6, 6, 7) \sim ``{\tt h\; e\; l\; l\; o}"$. 
Let $\omega(a,i)$ denote the prefix of $\omega$ ending with the $i$'th occurrence of the letter $a$. In particular, 
$|\omega(a,i)|$ is the position of the $i$'th occurrence of $a$.
For instance, if $\omega = ``{\tt S\; z\; e\; k\; e\; r\; e\; s}"$ then $\omega({\tt e},2) = ``{\tt S\; z\; e\; k\; e}"$ and $|\omega({\tt e},2)|=5$.  

\begin{define}
Let $A\subset \mb{N}$ and $\omega \in A^\star$. We say $\omega$ is a \df{regular} sequence on $A$ if the following hold.
\begin{itemize}
\item $A$ consists of at least $3$ elements.
\item $\omega_{_X} \sim \omega_{_Y}$ for all subsets $X, Y \subset A$ of size 2. 
\end{itemize}
\end{define}

\begin{example}
The sequence $\omega = (1, 2, 3, 3, 3, 2, 2, 1, 1)$ is regular on $[3]$. The restrictions are:  
\[\begin{array}{lcclccl}  
\omega_{_{\{1,2\}}} = (1, 2, 2, 2, 1, 1), & & &  
\omega_{_{\{1,3\}}} = (1, 3, 3, 3, 1, 1), & & & 
\omega_{_{\{2,3\}}} = (2, 3, 3, 3, 2, 2). 
\end{array}\] 
\end{example}

\begin{example}
The sequence $\omega = (1,2,3,1,3,2)$ is not regular since the
restrictions $\omega_{_{\{1,2\}}} = (1,2,1,2)$ and
$\omega_{_{\{2,3\}}} = (2,3,3,2)$ are not order equivalent. 
\end{example}

A regular sequence on $A$ is uniquely determined by $A$ and its restriction to any two elements. 

\begin{lemma}\label{2seq-unique}
If $\psi$ and $\omega$ are regular sequences on $[n]$ and $\psi_{_{\{1,2\}}} = \omega_{_{\{1,2\}}}$,
then $\psi = \omega$.
\end{lemma}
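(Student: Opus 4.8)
The plan is to show that the entire regular sequence $\omega$ on $[n]$ is reconstructed letter by letter from the single restriction $\omega_{_{\{1,2\}}}$, using the defining property that all size-$2$ restrictions are order equivalent. First I would observe that order equivalence of two-letter words is a very rigid condition: if $X=\{a,b\}$ and $Y=\{c,d\}$ with $a<b$ and $c<d$, then $\omega_{_X}\sim\omega_{_Y}$ means that $\omega_{_X}$ and $\omega_{_Y}$ have the same length and that the $m$-th letter of $\omega_{_X}$ is $a$ exactly when the $m$-th letter of $\omega_{_Y}$ is $c$. In particular, writing $\ell$ for the common length $|\omega_{_{\{1,2\}}}|$, every pair $\{a,b\}$ contributes exactly $\ell$ letters to $\omega$, and there is a fixed ``pattern'' $p\in\{\text{low},\text{high}\}^\ell$ (read off from $N(\omega_{_{\{1,2\}}})$) telling us, for the $m$-th appearance of either element of the pair, whether it is the smaller or the larger one.

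Next I would set up the counting bookkeeping. For a letter $a\in[n]$, its total number of occurrences in $\omega$ can be computed from the pattern $p$: pairing $a$ with each $b<a$ it appears as the ``high'' element, and pairing $a$ with each $b>a$ it appears as the ``low'' element, but these must all be consistent, so the number of occurrences of $a$ in $\omega$ equals (number of ``high'' slots in $p$) if $a>1$, equals (number of ``low'' slots in $p$) if $a<n$; for this to be consistent when $1<a<n$ we need these two counts to be equal — which is automatic because $\omega_{_{\{1,2\}}}$ being a restriction forces the count of $1$'s to equal the count of ``low'' slots and the count of $2$'s to equal the count of ``high'' slots, and one can check regularity forces these equal; in any case the total multiplicity of every letter is determined. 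The key claim to nail down is: for each position $t\in\{1,\dots,|\omega|\}$, the letter $\omega(t)$ is determined by $p$ together with how many copies of each letter have already appeared in $\omega(1),\dots,\omega(t-1)$. I would prove this by induction on $t$. Given the prefix, for the candidate next letter we know, for every pair $\{a,b\}$, how many of its $\ell$ slots have been used and hence what the next slot's type (low/high) is; a letter $a$ can legally occur next only if for \emph{every} other letter $b$, the type predicted for the $\{a,b\}$-slot is consistent with $a$'s role ($a$ low iff $a<b$). The crux is that at most one letter $a$ satisfies all these simultaneous constraints — and at least one does, since $\omega$ itself is a legal regular sequence, so its actual next letter works. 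Uniqueness: if two distinct letters $a<a'$ were both legal at position $t$, then consider the pair $\{a,a'\}$; $a$ being legal says the next $\{a,a'\}$-slot is ``low'', while $a'$ being legal says the next $\{a,a'\}$-slot is ``high'', a contradiction. So $\omega(t)$ is forced, completing the induction and showing $\omega$ is uniquely determined by $\omega_{_{\{1,2\}}}$; since $\psi$ satisfies the same recursion with the same initial data $\psi_{_{\{1,2\}}}=\omega_{_{\{1,2\}}}$, we get $\psi=\omega$.

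The main obstacle I anticipate is handling termination and the edge cases cleanly: one must argue that $\psi$ and $\omega$ have the \emph{same length} (not just agree as long as both are defined), which follows because the length is $\binom{n}{2}^{-1}$ times... no — more simply, the length of a regular sequence equals $\binom{|A|}{2}$ times? No: each pair contributes $\ell$ letters and each letter of $\omega$ lies in $(|A|-1)$ many pairs' restrictions, so $|\omega|\cdot(|A|-1) = \binom{|A|}{2}\cdot\ell$, giving $|\omega| = \tfrac{n}{2}\,\ell$, a quantity determined by $\omega_{_{\{1,2\}}}$ alone; hence $|\psi|=|\omega|$. With the length pinned down, the positional induction above gives equality. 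A secondary subtlety worth stating explicitly is why ``at least one legal letter exists'' at each step of the \emph{reconstruction} (as opposed to when we already have $\omega$ in hand): we do not actually need this for the lemma — we only need that \emph{whatever} $\psi$ is, its $t$-th letter is forced to equal $\omega$'s, and that argument only uses the uniqueness half plus the fact that both $\psi$ and $\omega$ are genuine regular sequences with a common first restriction. So I would phrase the induction hypothesis as ``$\psi(s)=\omega(s)$ for all $s<t$'' and derive ``$\psi(t)=\omega(t)$'' directly, avoiding any need to run an abstract reconstruction procedure.
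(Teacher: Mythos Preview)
Your proposal is correct, and the core argument---at the first position where $\psi$ and $\omega$ could differ, restrict to the pair $\{\psi(t),\omega(t)\}$ and obtain a contradiction with order equivalence to $\omega_{_{\{1,2\}}}$---is exactly the paper's proof. The paper dispatches this in three sentences by going straight to that step; your detours through multiplicity bookkeeping and the length computation are not needed (and you yourself recognize this in your final paragraph), though your observation that $|\psi|=|\omega|$ deserves a word, which the paper leaves implicit.
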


\begin{proof}
Suppose the lemma fails.  
Let the first entry where $\psi$ and $\omega$ differ be the $p$'th occurrence of $i$ in $\psi$ and the $q$'th occurrence of $j$ in $\omega$.  
This implies that $\psi_{_{\{i,j\}}}$ and $\omega_{_{\{i,j\}}}$ coincide for the first $p+q-2$ entries, but at the $p+q-1$'th entry they differ,
which contradicts the fact that they are both order equivalent to $\omega_{_{\{1,2\}}}$.
\end{proof}


We define a language characterizing the restrictions of regular sequences to subsets of size 2. 
Let $\{a, b\}$ be an ordered alphabet with $a\prec b$. A \df{balanced block} on the alphabet $\{a,b\}$ is a word of the form \[ a^rb^r = \underset{^{\times r}}{a \cdots a}\underset{^{\times r}}{b\cdots b} \hspace{.6cm} \text{or} \hspace{.6cm} b^ra^r = \underset{^{\times r}}{b\cdots b}\underset{^{\times r}}{a\cdots a}
\hspace{.6cm} \text{for some $r\in \mb{N}$.}\] Let $B_{_{(ab)}}$
denote the language of balanced blocks on $\{a,b\}$. 
A word $\omega \in B\rlap{$^\star$}_{_{(ab)}}$ is called \df{balanced}; 
that is, words $\omega = \omega_1 \cdots \omega_k$ 
where $\omega_i = a^{r_i}b^{r_i}$ or $b^{r_i}a^{r_i}$.  
We define the \df{block sizes} of $\omega$ to be the sequence $(r_1, r_2, \dots, r_k)$, which we denote by $\langle \omega \rangle$. 
For instance, $\langle (a^2b^2)(ba)(ab) \rangle = (2,1,1)$. 
Here is a simple way to check if a word is balanced.  Starting from the first letter parsing one letter at a time, count the number of occurrences of $a$ and the number of occurrences of $b$.  Whenever $a$ is succeeded by $b$, the number of $a$'s counted so far must be at least the number of $b$'s, and analogously when $b$ is succeeded by $a$.  In the end, we must have counted the same number of each letter. 
That is, we have the following equivalent definition. 

\begin{lemma}\label{bal-def}
A word $\omega \in \{a,b\}^\star$ is balanced if and only if the following hold.
\begin{itemize}
\item $|\omega_{_{\{a\}}}| = |\omega_{_{\{b\}}}|$.
\item If $|\omega(a,i)| +1 = |\omega(b,j+1)|$ then $i \geq j$.
\item If $|\omega(b,i)| +1 = |\omega(a,j+1)|$ then $i \geq j$.
\end{itemize}
\end{lemma}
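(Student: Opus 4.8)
The plan is to prove both directions by essentially unwinding the block structure. For the forward direction, I would write $\omega = \omega_1 \cdots \omega_k$ with each $\omega_i$ a balanced block $a^{r_i}b^{r_i}$ or $b^{r_i}a^{r_i}$, and verify the three conditions directly. The first condition $|\omega_{_{\{a\}}}| = |\omega_{_{\{b\}}}|$ is immediate since each block contributes $r_i$ copies of $a$ and $r_i$ copies of $b$, so both counts equal $r_1 + \dots + r_k$. For the second condition, suppose position $|\omega(a,i)|+1 = |\omega(b,j+1)|$, meaning the $i$-th $a$ is immediately followed by the $(j{+}1)$-th $b$. Since $a$ is succeeded by $b$ at this spot, the letter $a$ ends some block $\omega_\ell$ of the form $b^{r_\ell}a^{r_\ell}$ (if we are within a block both neighbors are equal; if $b^r a^r$ is followed by $a^r b^r$ the last $a$ of the first is followed by the first $a$ of the second, not a $b$; so the only way $a$ is followed by $b$ is at the end of a $b^r a^r$ block, which is then followed by an $a^s b^s$ block or by a $b^s a^s$ block). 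In either case, counting through $\omega_1, \dots, \omega_\ell$, the number of $a$'s seen so far is $r_1 + \dots + r_\ell = i$ while the number of $b$'s seen so far is $r_1 + \dots + r_\ell$ minus (at most) $r_\ell$... — more cleanly: after the prefix ending at the $i$-th $a$, the number of completed-or-partial $b$-runs shows the $b$-count is at most $i$, and since the very next letter is the $(j{+}1)$-th $b$, we have $j \le i$, i.e. $i \ge j$. The third condition is symmetric with the roles of $a$ and $b$ exchanged.

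For the converse, assume $\omega$ satisfies the three conditions; I would produce the block decomposition by induction on $|\omega|$. If $\omega$ is empty (or, say, starts by repeating a single letter to the end) the claim is trivial or follows from condition one. Otherwise let the first letter be $a$ (the case $b$ is symmetric), and let $a^r$ be the maximal initial run of $a$'s, followed by the first $b$. I claim the initial run of $b$'s following it has length at most $r$: if it had length $r' > r$, then the $(r{+}1)$-th $b$ occurs at a position where it is preceded by the $r$-th $b$, but tracking back, the letter just before the first $b$ is the $r$-th $a$, and condition one/two forces a count inequality that is violated once the $b$-count exceeds $r$ before any further $a$ appears — concretely, consider the first $a$ after this block, say the $(r{+}1)$-th $a$ overall; it is preceded by some $b$, the $j$-th, with $j = r' > r = $ the number of $a$'s seen, contradicting condition three. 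Hence the $b$-run has length $r'' \le r$. If $r'' = r$ we have split off a balanced block $a^r b^r$ and recurse on the remainder (checking the conditions are inherited — they are, since a prefix count argument localizes). If $r'' < r$, then the next letter is again an $a$; here I'd argue this cannot happen: the $(r''{+}1)$-th $b$, when it eventually appears, will be preceded by an $a$ whose index exceeds $r$... Actually the cleanest route is: let $m$ be minimal such that the prefix $\omega(a,m) $ has equal $a$- and $b$-counts — such $m$ exists and is $\le$ the total $a$-count by condition one and a discrete intermediate-value argument on the running difference (#$a$ $-$ #$b$), which starts positive and ends at zero. Then conditions two and three force that prefix to be exactly $a^{r} b^{r}$ with $r = m$ (any internal $ab$ or $ba$ transition before the count balances would violate the inequalities), so we peel it off and induct.

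The main obstacle I expect is the bookkeeping in the converse: cleanly extracting the first block and verifying that the three conditions are inherited by the remaining suffix. The right tool is the running difference function $d(p) = \#\{a\text{'s in first } p \text{ letters}\} - \#\{b\text{'s in first } p\text{ letters}\}$; conditions two and three say precisely that $d$ cannot go negative immediately after a descent from a positive value nor... — more usefully, that $d(p) \ge 0$ whenever $d$ is about to change sign, which combined with the $\pm 1$ step size of $d$ gives $d(p) \ge 0$ for all $p$ (when $\omega$ starts with $a$), and condition one gives $d(|\omega|) = 0$. Returns of $d$ to $0$ then cut $\omega$ into balanced blocks, and one checks each such block, being a minimal positive excursion, is a single $a^r b^r$ (no sign change of the local difference inside forces no $ab\!\to\!ba$ alternation). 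Once this "Dyck-path" reformulation is in place the proof is a short induction; without it the casework is messy but still elementary.
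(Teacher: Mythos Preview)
Your proposal has genuine gaps in both directions, though the converse is where the real problem lies.

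In the forward direction, your case analysis of where an $ab$ transition can occur is incomplete: you claim ``if we are within a block both neighbors are equal,'' but inside a block $a^rb^r$ the $r$-th $a$ is immediately followed by the first $b$. So $ab$ transitions occur both at the midpoint of $a^rb^r$ blocks and at certain block boundaries; your argument only handles the latter. This direction is still easy to fix (count $a$'s and $b$'s in the prefix and note the difference is nonnegative at an interior $ab$ transition and zero at a boundary one), and the paper dismisses it in one sentence.

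In the converse, both of your attempts fail. In the first, you write $\omega = a^r b^{r'} a\cdots$ and try to deduce $r' \le r$; but the third condition, applied to $|\omega(b,r')|+1 = |\omega(a,r+1)|$, gives $r' \ge r$, the \emph{opposite} inequality. In the Dyck-path reformulation, your claim that $d(p)\ge 0$ for all $p$ (when $\omega$ starts with $a$) is simply false: take $\omega = (ab)(ba)=abba$, which is balanced, starts with $a$, and has $d(3)=-1$. Balanced words are not Dyck paths; they can dip below zero whenever a $b^ra^r$ block follows.

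The paper's argument is exactly the clean version of your first attempt, but with the inequality used the right way around: from $\omega = a^j b^i a\cdots$ and condition three one gets $i \ge j$, so the first $2j$ letters form the balanced block $a^jb^j$. Peel that off (not the full $b$-run), check that the suffix inherits the three conditions by a simple index shift, and induct on length. The key point you missed is that you should peel off a block of size governed by the \emph{shorter} initial run, not try to force the two runs to have equal length.
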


\begin{proof}
Let $\omega$ satisfy the conditions of the lemma.
We will show that $\omega$ is balanced by expressing $\omega$ as a concatenation of balanced blocks.  
We may assume by symmetry that $\omega$ begins with $a^jb\cdots$. 
If there are no more occurrences of $a$, then $\omega = a^j b^j$ is balanced. 
Otherwise $\omega$ begins with $a^jb^ia\cdots$,
and since $|\omega(b,i)|+1 = |\omega(a,j+1)|$ we have $i \geq j$, 
so the first $2j$ letters of $\omega$ are the balanced block $\omega_1 = a^j b^j$.

Let $\omega = \omega_1 \cdot \omega_2$.  
Now we have $|\omega_{2_{\{a\}}}| = |\omega_{_{\{a\}}}|-j$ and $|\omega_2(a,i-j)| = |\omega(a,i)| -2j$, and likewise for occurrences of the letter $b$ in $\omega_2$, so $\omega_2$ satisfies the conditions of the lemma.  
Therefore, by induction on the length of $\omega$, we can express $\omega$ as a concatenation of balanced blocks.

The other direction follows immediately from the definition of balanced blocks and the fact that the conditions of the lemma are preserved by concatenation.
\end{proof}

As a consequence we have the following.

\begin{corollary}\label{bal-concat}
If $\omega = \omega_1 \cdot \omega_2$ is balanced, then the following are equivalent.
\begin{itemize}
\item $\omega_1$ is balanced.
\item $\omega_2$ is balanced.
\item $|\omega_{1_{\{a\}}}| = |\omega_{1_{\{b\}}}|$.
\item $|\omega_{2_{\{a\}}}| = |\omega_{2_{\{b\}}}|$.
\end{itemize}
\end{corollary}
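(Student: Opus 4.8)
The plan is to deduce everything from Lemma \ref{bal-def} together with the observation that the three conditions appearing there behave additively (resp.\ subtractively) under splitting a word at a fixed position. First I would record that since $\omega = \omega_1\cdot\omega_2$ is balanced, it satisfies all three bullets of Lemma \ref{bal-def}; in particular $|\omega_{_{\{a\}}}| = |\omega_{_{\{b\}}}|$, whence $|\omega_{1_{\{a\}}}| - |\omega_{1_{\{b\}}}| = -(|\omega_{2_{\{a\}}}| - |\omega_{2_{\{b\}}}|)$. This single identity immediately gives the equivalence of the last two bullets of the corollary, and also shows that either of them is equivalent to the conjunction "$|\omega_{1_{\{a\}}}| = |\omega_{1_{\{b\}}}|$ and $|\omega_{2_{\{a\}}}| = |\omega_{2_{\{b\}}}|$''.

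Next I would show "$\omega_1$ balanced $\Rightarrow$ $|\omega_{1_{\{a\}}}| = |\omega_{1_{\{b\}}}|$'' and similarly for $\omega_2$; these are just the first bullet of Lemma \ref{bal-def} applied to $\omega_1$ and to $\omega_2$, so they are immediate. The substantive direction is the converse: assuming $|\omega_{1_{\{a\}}}| = |\omega_{1_{\{b\}}}|$, I want to conclude that both $\omega_1$ and $\omega_2$ are balanced. For this I verify the second and third bullets of Lemma \ref{bal-def} for $\omega_1$ and for $\omega_2$ separately. For $\omega_1$, any constraint of the form "$|\omega_1(a,i)|+1 = |\omega_1(b,j+1)|$'' is simultaneously a constraint "$|\omega(a,i)|+1 = |\omega(b,j+1)|$'' inside $\omega$ — the prefix of $\omega$ ending at that occurrence lies entirely within $\omega_1$ — so it is inherited from the fact that $\omega$ satisfies Lemma \ref{bal-def}; the same applies with $a$ and $b$ interchanged. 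For $\omega_2$, an occurrence of a letter at position $p$ within $\omega_2$ sits at position $|\omega_1| + p$ within $\omega$; since $|\omega_{1_{\{a\}}}| = |\omega_{1_{\{b\}}}| =: m$, the $i$'th occurrence of a letter in $\omega_2$ is the $(m+i)$'th occurrence in $\omega$, so a constraint "$|\omega_2(a,i)|+1 = |\omega_2(b,j+1)|$'' translates verbatim into "$|\omega(a,m+i)|+1 = |\omega(b,m+j+1)|$'', which $\omega$ satisfies, giving $m+i \ge m+j$, i.e.\ $i \ge j$. Together with the length condition $|\omega_{2_{\{a\}}}| = |\omega_{2_{\{b\}}}|$ (which follows from $|\omega_{1_{\{a\}}}| = |\omega_{1_{\{b\}}}|$ and the balancedness of $\omega$), Lemma \ref{bal-def} then certifies that $\omega_2$ is balanced, and the same argument certifies $\omega_1$.

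Finally I would assemble the cycle of implications: "$\omega_1$ balanced'' $\Rightarrow$ "$|\omega_{1_{\{a\}}}| = |\omega_{1_{\{b\}}}|$'' $\Leftrightarrow$ "$|\omega_{2_{\{a\}}}| = |\omega_{2_{\{b\}}}|$'' $\Rightarrow$ "$\omega_2$ balanced'' (and symmetrically "$\omega_2$ balanced'' $\Rightarrow$ "$\omega_1$ balanced''), which closes the loop through all four listed statements. I do not anticipate a serious obstacle here; the only point requiring care is the bookkeeping of occurrence indices when passing from $\omega$ to the suffix $\omega_2$, namely the shift by $m = |\omega_{1_{\{a\}}}|$, and making sure the hypothesis $|\omega_{1_{\{a\}}}| = |\omega_{1_{\{b\}}}|$ is exactly what makes that shift the same for both letters. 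An even slicker alternative, if one prefers, is to invoke the last sentence of the proof of Lemma \ref{bal-def} — that the conditions are preserved under concatenation — in reverse: a balanced $\omega$ written as a concatenation of balanced blocks can be cut only either strictly inside a block or between blocks, and the hypothesis $|\omega_{1_{\{a\}}}| = |\omega_{1_{\{b\}}}|$ rules out the former, exhibiting both parts as concatenations of balanced blocks directly.
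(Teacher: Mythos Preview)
Your proposal is correct and is exactly the kind of argument the paper has in mind: the corollary is stated without proof, merely as ``a consequence'' of Lemma~\ref{bal-def}, and your derivation via that lemma---checking that the two position-comparison conditions restrict to $\omega_1$ as prefixes of $\omega$ and to $\omega_2$ after shifting occurrence indices by the common value $m=|\omega_{1_{\{a\}}}|=|\omega_{1_{\{b\}}}|$---is the intended unpacking. The alternative you sketch at the end (cut at a block boundary) is also fine, and is perhaps closer to how one would see the statement as immediate; just note that the reason $|\omega_{1_{\{a\}}}|=|\omega_{1_{\{b\}}}|$ forces the cut to land between blocks is that inside any single balanced block $a^rb^r$ or $b^ra^r$ the running counts of $a$ and $b$ never coincide except at the endpoints.
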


\begin{prop}\label{2reg}
For any $n\geq 3$ and $\omega \in \{a,b\}^\star$,  
there is a regular sequence $\nu$ on $[n]$ with $\nu_{_{\{1,2\}}} \sim \omega$ if and only if $\omega$ is balanced.
\end{prop}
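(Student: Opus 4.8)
The plan is to prove both implications by elementary constructions and a position count, relying on Lemma~\ref{bal-def}.

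For the ``if'' direction, assume $\omega$ is balanced and write $\omega = \omega_1 \cdots \omega_k$ as a concatenation of balanced blocks, where each $\omega_\ell$ is $a^{r_\ell} b^{r_\ell}$ or $b^{r_\ell} a^{r_\ell}$. I would construct $\nu$ on $[n]$ by replacing each block by a monotone sweep through all of $[n]$: put $\nu = \nu^{(1)} \cdots \nu^{(k)}$, where $\nu^{(\ell)} = 1^{r_\ell} 2^{r_\ell} \cdots n^{r_\ell}$ if $\omega_\ell = a^{r_\ell} b^{r_\ell}$ and $\nu^{(\ell)} = n^{r_\ell} (n-1)^{r_\ell} \cdots 1^{r_\ell}$ if $\omega_\ell = b^{r_\ell} a^{r_\ell}$. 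For any two indices $i < j$ in $[n]$, the restriction of $\nu^{(\ell)}$ to $\{i,j\}$ is $i^{r_\ell} j^{r_\ell}$ or $j^{r_\ell} i^{r_\ell}$, which is order equivalent to $\omega_\ell$; since both $i$ and $j$ occur in every block, concatenating over $\ell$ gives $\nu_{_{\{i,j\}}} \sim \omega$. Thus every size-$2$ restriction of $\nu$ is order equivalent to $\omega$, hence they are pairwise order equivalent, so $\nu$ is a regular sequence on $[n]$ with $\nu_{_{\{1,2\}}} \sim \omega$.

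For the ``only if'' direction, let $\nu$ be regular on $[n]$ with $\nu_{_{\{1,2\}}} \sim \omega$. Restricting $\nu$ to $\{1,2,3\}$ gives a regular sequence on $\{1,2,3\}$ whose $\{1,2\}$-restriction is still order equivalent to $\omega$, so I may assume $n = 3$; then $\nu_{_{\{1,2\}}} \sim \nu_{_{\{1,3\}}} \sim \nu_{_{\{2,3\}}} \sim \omega$. Comparing the number of occurrences of the two ranks across these three restrictions shows that $1$, $2$, and $3$ occur equally often in $\nu$, which is the first condition of Lemma~\ref{bal-def}. For the remaining two conditions, suppose the $i$-th $a$ of $\omega$ is immediately followed by the $(j+1)$-th $b$. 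Transporting this adjacency through the three order equivalences translates it into three statements about $\nu$: the $i$-th $1$ immediately precedes the $(j+1)$-th $2$ with only $3$'s between them; the $i$-th $1$ immediately precedes the $(j+1)$-th $3$ with only $2$'s between them; and the $i$-th $2$ immediately precedes the $(j+1)$-th $3$ with only $1$'s between them. The first two statements force the $i$-th $1$ to occupy position $i + 2j$ of $\nu$; then computing the position of the $(j+1)$-th $3$ in two ways — once by going from the $i$-th $1$ through the interposed $2$'s, once by going from the $i$-th $2$ through the interposed $1$'s — forces the number of $2$'s following the $i$-th $1$ to equal $i - j$, which being nonnegative gives $i \geq j$. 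This is the second condition of Lemma~\ref{bal-def}, and the third condition follows by the entirely analogous argument with the roles of the smaller and larger letter of each pair interchanged (equivalently, by applying the first argument to the order-reversing relabelling $1 \leftrightarrow 3$ of $\nu$, which is again regular and whose $\{1,2\}$-restriction is order equivalent to the word obtained from $\omega$ by swapping $a$ and $b$). Hence $\omega$ is balanced.

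The main obstacle is the bookkeeping in the ``only if'' direction: one must keep precise track of which occurrence of each letter lies where in $\nu$, but the three order equivalences are exactly rigid enough to pin down every relevant count and deliver $i \geq j$. The ``if'' direction and the reduction to $n = 3$ are routine.
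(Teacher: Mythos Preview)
Your proof is correct and follows essentially the same approach as the paper. The ``if'' direction is identical: the paper defines exactly your monotone-sweep map $\phi_n$ on balanced blocks and extends by concatenation. For the ``only if'' direction, the paper argues by contradiction --- assuming $\nu_{_{\{1,2\}}}$ fails the second condition of Lemma~\ref{bal-def}, it uses $\nu_{_{\{1,2\}}}\sim\nu_{_{\{1,3\}}}$ and $\nu_{_{\{1,2\}}}\sim\nu_{_{\{2,3\}}}$ to squeeze the $j$'th occurrence of $3$ into an impossible location --- whereas you verify the same condition directly by the position count $s=i-j\geq 0$; the underlying mechanism (exploiting the three pairwise order-equivalences to pin down positions in $\nu$) is the same, just organized as a direct proof rather than a contradiction.
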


\begin{proof}
We define a function, 
$\phi_{_n} \colon B\rlap{$^\star$}_{_{(ab)}} \to [n]^\star$,
which produces a regular sequence on $[n]$ such that $\phi_{_n}(\omega)_{_{\{1,2\}}} \sim \omega$. 
First, we define $\phi_{_n}$ on balanced blocks as  
\[\begin{array}{rcl}
  \phi_{_n}(a^rb^r) & \defeq & 
  \underset{^{\times r}}{1, \cdots, 1},
  \underset{^{\times r}}{2, \cdots, 2}, \cdots, 
  \underset{^{\times r}}{n, \cdots, n}  \\
 \phi_{_n}(b^ra^r) & \defeq & 
  \underset{^{\times r}}{n, \cdots, n}, \cdots,
  \underset{^{\times r}}{2, \cdots, 2}, 
  \underset{^{\times r}}{1, \cdots, 1}. 
\end{array}
\] 
Extend $\phi_{_n}$ to arbitrary words $\omega \in B\rlap{$^\star$}_{_{(ab)}}$ by
concatenation in the following way: For $\omega = \omega_1
\cdots \omega_k$, where $\omega_i \in B_{_{(ab)}}$, define
$\phi_{_n}(\omega) \defeq  \phi_{_n}(\omega_1) \cdots
\phi_{_n}(\omega_k)$. Note that the function $\phi_{_n}$ is well defined. 
Let $\nu = \phi_{_n}(\omega)$. 
Clearly $\nu_{_X} \sim \omega$ for any $X\subset [n]$ with $|X| = 2$, 
so the sequence $\nu = \phi_{_n}(\omega)$ is regular.  

For the other direction, 
let $\nu$ be a regular sequence on $[n]$, and suppose $\nu_{_{\{1,2\}}}$ is not balanced. 
Then, by Lemma~\ref{bal-def} and by symmetry, 
may assume there is $i < j-1$ such that $|\nu(1,i)|+1 = |\nu(2,j)|$.
Since $\nu_{_{\{1,2\}}} \sim \nu_{_{\{1,3\}}}$, 
the $j$'th occurrence of 3 must be between the $i$'th and $i{+}1$'th occurrence of 1,
${|\nu(1,i)|<|\nu(3,j)|<|\nu(1,i+1)|}$, 
and since $\nu_{_{\{1,2\}}} \sim \nu_{_{\{2,3\}}}$, 
the $j$'th occurrence of 3 must also be between the $i$'th and $i{+}1$'th occurrence of 2,
$|\nu(2,i)|<|\nu(3,j)|<|\nu(2,i+1)|$.
But this is impossible, since  $|\nu(2,i+1)| \leq |\nu(2,j)|-1 = |\nu(1,i)|$.
Thus, $\nu_{_{\{1,2\}}}$ is balanced.
\end{proof}

\begin{corollary} \label{2reg-cor}
Let $n\geq 3$ be fixed. 
The set of regular sequences on $[n]$ is in bijective correspondence with the language $B\rlap{$^\star$}_{_{(ab)}}$.
\end{corollary}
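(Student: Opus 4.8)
The plan is to name the bijection explicitly and to read off its two directions from Proposition~\ref{2reg}, using Lemma~\ref{2seq-unique} to pin down injectivity on the sequence side. In the direction from words to sequences, use the map $\phi_{_n}$ constructed in the proof of Proposition~\ref{2reg}, restricted to $B\rlap{$^\star$}_{_{(ab)}}$: that proof already shows that $\phi_{_n}(\omega)$ is a regular sequence on $[n]$ for every balanced $\omega$, and that $\phi_{_n}$ is well defined (its value does not depend on how $\omega$ is cut into blocks). In the direction from sequences to words, send a regular sequence $\nu$ on $[n]$ to the word $\beta(\nu)\in\{a,b\}^\star$ obtained from the restriction $\nu_{_{\{1,2\}}}$ by the relabelling $1\mapsto a$, $2\mapsto b$. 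Since restriction to a two-element set produces an honest word, $\beta$ is a genuine map into $\{a,b\}^\star$, and by the ``only if'' direction of Proposition~\ref{2reg} its image lies in $B\rlap{$^\star$}_{_{(ab)}}$.

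First I would verify $\beta\circ\phi_{_n}=\mathrm{id}$ on $B\rlap{$^\star$}_{_{(ab)}}$. From the definition of $\phi_{_n}$ on a single balanced block one has $\phi_{_n}(a^rb^r)_{_{\{1,2\}}}=1^r2^r$ and $\phi_{_n}(b^ra^r)_{_{\{1,2\}}}=2^r1^r$, and both $\phi_{_n}$ and restriction to $\{1,2\}$ commute with concatenation; hence for $\omega=\omega_1\cdots\omega_k$ the word $\phi_{_n}(\omega)_{_{\{1,2\}}}$ is exactly $\omega$ with the letters $a,b$ renamed to $1,2$. Therefore $\beta(\phi_{_n}(\omega))=\omega$, which in particular reconfirms that $\phi_{_n}$ is injective on $B\rlap{$^\star$}_{_{(ab)}}$.

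Next I would verify $\phi_{_n}\circ\beta=\mathrm{id}$ on regular sequences, which is where Lemma~\ref{2seq-unique} enters. Given a regular sequence $\nu$ on $[n]$, both $\nu$ and $\phi_{_n}(\beta(\nu))$ are regular sequences on $[n]$ whose restrictions to $\{1,2\}$ are order equivalent to the common word $\beta(\nu)$. Two words on the two-letter set $\{1,2\}$ in which both letters occur are order equivalent only if they are literally equal, and in the excluded case $\nu$ uses neither $1$ nor $2$, which by regularity forces $\nu$ to be the empty sequence (corresponding to the empty word), handled trivially. Hence $\nu_{_{\{1,2\}}}=\phi_{_n}(\beta(\nu))_{_{\{1,2\}}}$, and Lemma~\ref{2seq-unique} yields $\nu=\phi_{_n}(\beta(\nu))$. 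Thus $\phi_{_n}$ and $\beta$ are mutually inverse bijections between $B\rlap{$^\star$}_{_{(ab)}}$ and the set of regular sequences on $[n]$.

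I do not expect a serious obstacle: the corollary is essentially a repackaging of Proposition~\ref{2reg} and Lemma~\ref{2seq-unique}. The only point requiring care is the bookkeeping between order-equivalence classes and literal words, so that what comes out is an honest set bijection rather than a mere correspondence of equivalence classes; isolating the canonical representative $\beta(\nu)$ on the fixed alphabet $\{a,b\}$ is what makes this clean, and checking that restriction to $\{1,2\}$ turns order equivalence into genuine equality is the one small verification that has to be stated carefully.
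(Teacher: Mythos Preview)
Your proposal is correct and is essentially the same approach as the paper's: the paper's proof is the single sentence ``This follows from Lemma~\ref{2seq-unique} and Proposition~\ref{2reg},'' and what you have written is precisely an explicit unpacking of that sentence, naming the bijection $\phi_{_n}$ and its inverse $\beta$ and checking the two composites. The only minor remark is that your ``excluded case'' discussion can be streamlined: since $\beta(\nu)$ is balanced by Proposition~\ref{2reg}, it either uses both letters or is empty, so the one-letter case never arises.
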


\begin{proof}
This follows from Lemma~\ref{2seq-unique} and Proposition~\ref{2reg}. 
\end{proof}

\begin{example}
  Consider $\omega = (ab)(a^2b^2)(ba) \in B\rlap{$^\star$}_{_{(ab)}}$. For $n=3$ we get 
  \[\nu = \phi_{_3}(\omega) = (1, 2, 3, 1, 1, 2, 2, 3, 3, 3, 2, 1).\]
  The restrictions are:
\[\begin{array}{lcclccl} 
\nu_{_{\{1,2\}}} = (1, 2, 1, 1, 2, 2, 2, 1), & & &  
\nu_{_{\{1,3\}}} = (1, 3, 1, 1, 3, 3, 3, 1), & & &  
\nu_{_{\{2,3\}}} = (2, 3, 2, 2, 3, 3, 3, 2).
\end{array}\]
\end{example}

\bigskip

\subsection{Regular tableaux} \label{3regs}

Let $A = \{a_1, \dots , a_n\} \subset \mb{N}$ with
$a_i<a_{i+1}$. 
A sequence $T = (\omega_1, \dots, \omega_n)$ where
$\omega_i \in (A\setminus\{a_i\})^\star$ is called a \df{tableau} on $A$. 
We call $\omega_i$, the $i$'th {row} of $T$, and denote it by $T(i) \defeq \omega_i$. 
Let $A^\tabox$ be the set of all tableaux on $A$. 
For a subset $X = \{a_{i_1}, \dots , a_{i_k}\}\subset A$
with $i_j<i_{j+1}$, 
we define a tableau $T_{_X} \defeq (T(i_1)_{_X}, \dots,T(i_k)_{_X}) \in X^\tabox$. 
We call this the \df{restriction} of $T$ to $X$.  
Define $N \colon A^\tabox \to \mb{N}^\tabox $ by $a_{i} \mapsto i$. 
The tableaux $T$ and $T'$ are \df{order equivalent} when $N(T) = N(T')$,
in which case we write $T\sim T'$. Given two tableaux $T_1$ and $T_2$ on $A$, we define
their concatenation, $T_1\cdot 
T_2$, by letting $(T_1\cdot T_2) (i) = T_1(i) \cdot T_2(i)$ for every
$i\in A$.

When considering specific examples of tableaux it is convenient to use a
graphical representation. We represent a tableau as
left-justified rows of boxes 
in increasing order from bottom to top 
containing the letters of each row in their given order. 
For instance, 
let $T = (\omega_1, \omega_2, \omega_3)$  
be a tableau on $\{4,7,9\}$ with rows $\omega_1 = (7,9,7,9,9,7)$,
$\omega_2 = (9,9,4,4)$,
and $\omega_3 = (4,7,7,4,7)$.
Its graphical representation is given below along with $N(T)$.

\begin{center}
  \begin{tikzpicture}
    \begin{scope}[scale = .4]
      \draw (0,0) --++(6,0) --++ (0,1) --++(-2,0) --++(0,1) --++(1,0)
      --++(0,1) --++(-5,0) --cycle;
      \draw (1,0) --++(0,3) (2,0) --++(0,3) (3,0) --++(0,3) (4,0)
      --++(0,1) (4,2) --++(0,1) (5,0) --++(0,1) (0,1) --++(4,0) (0,2)
      --++(4,0); 
      \node at (.5,.5) {\ft $7$};
      \node at (1.5,.5) {\ft $9$};
      \node at (2.5,.5) {\ft $7$};
      \node at (3.5,.5) {\ft $9$};
      \node at (4.5,.5) {\ft $9$};
      \node at (5.5,.5) {\ft $7$};
      \node (a) at (.5,1.5) {\ft $9$};
      \node at (1.5,1.5) {\ft $9$};
      \node at (2.5,1.5) {\ft $4$};
      \node at (3.5,1.5) {\ft $4$};
      \node at (.5,2.5) {\ft $4$};
      \node at (1.5,2.5) {\ft $7$};
      \node at (2.5,2.5) {\ft $7$};
      \node at (3.5,2.5) {\ft $4$};
      \node at (4.5,2.5) {\ft $7$};
    \end{scope}
   \node[left,xshift=-.2cm] at (a) {\ft $T=$};
    \begin{scope}[scale = .4, xshift = 12cm]
      \draw (0,0) --++(6,0) --++ (0,1) --++(-2,0) --++(0,1) --++(1,0)
      --++(0,1) --++(-5,0) --cycle;
      \draw (1,0) --++(0,3) (2,0) --++(0,3) (3,0) --++(0,3) (4,0)
      --++(0,1) (4,2) --++(0,1) (5,0) --++(0,1) (0,1) --++(4,0) (0,2)
      --++(4,0); 
      \node at (.5,.5) {\ft $2$};
      \node at (1.5,.5) {\ft $3$};
      \node at (2.5,.5) {\ft $2$};
      \node at (3.5,.5) {\ft $3$};
      \node at (4.5,.5) {\ft $3$};
      \node at (5.5,.5) {\ft $2$};
      \node (a) at (.5,1.5) {\ft $3$};
      \node at (1.5,1.5) {\ft $3$};
      \node at (2.5,1.5) {\ft $1$};
      \node at (3.5,1.5) {\ft $1$};
      \node at (.5,2.5) {\ft $1$};
      \node at (1.5,2.5) {\ft $2$};
      \node at (2.5,2.5) {\ft $2$};
      \node at (3.5,2.5) {\ft $1$};
      \node at (4.5,2.5) {\ft $2$};
    \end{scope}
   \node[left,xshift=-.2cm] at (a) {\ft $N(T)=$};
  \end{tikzpicture}
\end{center}

\begin{define}
  Let $T$ be a tableau on $A\subset \mb{N}$. We say $T$ is
  \df{regular} if the following hold.
  \begin{itemize}
  \item $A$ consists of at least $4$ elements.  
  \item $T_{_X} \sim T_{_Y}$  for all subsets $X, Y \subset A$ of size 3.
  \end{itemize} 
\end{define}

\begin{example} 
Below is a regular tableau on $[4]$.

\begin{center}
  \begin{tikzpicture}
    \begin{scope}[scale = .4]
      \draw (0,0) --++ (6,0) --++ (0,1) --++ (-1,0) --++ (0,1) --++
      (-1,0) --++ (0,1) --++ (-1,0) --++ (0,1) --++ (-3,0) --cycle;
      \draw (1,0) --++ (0,4) (2,0) --++ (0,4) (3,0)--++(0,3)
      (4,0)--++(0,2) (5,0)--++(0,1) (0,1)--++(5,0) (0,2) coordinate (a) --++(4,0)
      (0,3)--++(3,0);
      \node at (.5,.5) {\ft $4$};
      \node at (1.5,.5) {\ft $4$};
      \node at (2.5,.5) {\ft $3$};
      \node at (3.5,.5) {\ft $3$};
      \node at (4.5,.5) {\ft $2$};
      \node at (5.5,.5) {\ft $2$};
      \node at (.5,1.5) {\ft $4$};
      \node at (1.5,1.5) {\ft $4$};
      \node at (2.5,1.5) {\ft $3$};
      \node at (3.5,1.5) {\ft $3$};
      \node at (4.5,1.5) {\ft $1$};
      \node at (.5,2.5) {\ft $4$};
      \node at (1.5,2.5) {\ft $4$};
      \node at (2.5,2.5) {\ft $1$};
      \node at (3.5,2.5) {\ft $2$};
      \node at (.5,3.5) {\ft $1$};
      \node at (1.5,3.5) {\ft $2$};
      \node at (2.5,3.5) {\ft $3$};
    \end{scope}
   \node[left] at (a) {\ft $T=$};
  \end{tikzpicture}
\end{center}

Restricting to the 3-subsets of $[4]$ gives us
the following tableaux.

\vspace{.2cm}

\begin{center}
  \begin{tikzpicture}
    \begin{scope}[scale = .4]
      \draw (0,0) --++ (4,0) --++ (0,1) --++(-1,0) --++(0,1)
      --++(-1,0) --++(0,1) --++(-2,0) --cycle;
      \draw (1,0) --++ (0,3) (2,0) --++ (0,2) (3,0) --++ (0,1) (0,1)
      --++ (3,0) (0,2) --++ (2,0); 
      \node at (.5,.5) {\ft $3$};
      \node at (1.5,.5) {\ft $3$};
      \node at (2.5,.5) {\ft $2$};
      \node at (3.5,.5) {\ft $2$};
      \node (a) at (.5,1.5) {\ft $3$};
      \node at (1.5,1.5) {\ft $3$};
      \node at (2.5,1.5) {\ft $1$};
      \node at (.5,2.5) {\ft $1$};
      \node at (1.5,2.5) {\ft $2$};
    \end{scope}
   \node[left,xshift=-.2cm] at (a) {\ft $T_{_{\{1,2,3\}}}=$};
    \begin{scope}[scale = .4, xshift = 8cm]
      \draw (0,0) --++ (4,0) --++ (0,1) --++(-1,0) --++(0,1)
      --++(-1,0) --++(0,1) --++(-2,0) --cycle;
      \draw (1,0) --++ (0,3) (2,0) --++ (0,2) (3,0) --++ (0,1) (0,1)
      --++ (3,0) (0,2) --++ (2,0); 
      \node at (.5,.5) {\ft $4$};
      \node at (1.5,.5) {\ft $4$};
      \node at (2.5,.5) {\ft $2$};
      \node at (3.5,.5) {\ft $2$};
      \node (a) at (.5,1.5) {\ft $4$};
      \node at (1.5,1.5) {\ft $4$};
      \node at (2.5,1.5) {\ft $1$};
      \node at (.5,2.5) {\ft $1$};
      \node at (1.5,2.5) {\ft $2$};
    \end{scope}
   \node[left,xshift=-.2cm] at (a) {\ft $T_{_{\{1,2,4\}}}=$};
    \begin{scope}[scale = .4, xshift = 16cm]
      \draw (0,0) --++ (4,0) --++ (0,1) --++(-1,0) --++(0,1)
      --++(-1,0) --++(0,1) --++(-2,0) --cycle;
      \draw (1,0) --++ (0,3) (2,0) --++ (0,2) (3,0) --++ (0,1) (0,1)
      --++ (3,0) (0,2) --++ (2,0); 
      \node at (.5,.5) {\ft $4$};
      \node at (1.5,.5) {\ft $4$};
      \node at (2.5,.5) {\ft $3$};
      \node at (3.5,.5) {\ft $3$};
      \node (a) at (.5,1.5) {\ft $4$};
      \node at (1.5,1.5) {\ft $4$};
      \node at (2.5,1.5) {\ft $1$};
      \node at (.5,2.5) {\ft $1$};
      \node at (1.5,2.5) {\ft $3$};
    \end{scope}
   \node[left,xshift=-.2cm] at (a) {\ft $T_{_{\{1,3,4\}}}=$};
    \begin{scope}[scale = .4, xshift = 24cm]
      \draw (0,0) --++ (4,0) --++ (0,1) --++(-1,0) --++(0,1)
      --++(-1,0) --++(0,1) --++(-2,0) --cycle;
      \draw (1,0) --++ (0,3) (2,0) --++ (0,2) (3,0) --++ (0,1) (0,1)
      --++ (3,0) (0,2) --++ (2,0); 
      \node at (.5,.5) {\ft $4$};
      \node at (1.5,.5) {\ft $4$};
      \node at (2.5,.5) {\ft $3$};
      \node at (3.5,.5) {\ft $3$};
      \node (a) at (.5,1.5) {\ft $4$};
      \node at (1.5,1.5) {\ft $4$};
      \node at (2.5,1.5) {\ft $2$};
      \node at (.5,2.5) {\ft $2$};
      \node at (1.5,2.5) {\ft $3$};
    \end{scope}
   \node[left,xshift=-.2cm] at (a) {\ft $T_{_{\{2,3,4\}}}=$};
  \end{tikzpicture}
\end{center}

\end{example}

A regular tableau on $A$ is uniquely determined by $A$ and its restriction to any three elements.

\begin{lemma}\label{3tab-unique}
If $T$ and $U$ are regular tableau on $[n]$ such that $T_{_{\{1,2,3\}}} = U_{_{\{1,2,3\}}}$, 
then $T = U$. 
\end{lemma}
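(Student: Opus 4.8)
The statement generalizes Lemma~\ref{2seq-unique} from regular sequences to regular tableaux, and I expect the proof to follow the same ``first point of disagreement'' strategy, but now unwound one dimension higher. First I would reduce the problem to showing that each row of $T$ is determined: since $T$ and $U$ agree on the restriction to $\{1,2,3\}$, it suffices to show that for each $i\in[n]$ the row $T(i)$ equals $U(i)$. Fix $i$. The key is that $T(i)$ is a sequence on $[n]\setminus\{i\}$ which, by regularity of $T$, is itself a regular sequence (its restriction to any two-element subset $\{j,\ell\}\subset[n]\setminus\{i\}$ is order-equivalent to any other such restriction, because this is exactly the content of $T_{_{\{i,j,\ell\}}}\sim T_{_{\{i,j',\ell'\}}}$ read on the $i$th row). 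So by Lemma~\ref{2seq-unique} applied to the alphabet $[n]\setminus\{i\}$, the row $T(i)$ is determined by its restriction to the two smallest available elements, and likewise for $U(i)$.

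**Carrying it out.**
So it remains to check that $T(i)_{_{\{j,\ell\}}} = U(i)_{_{\{j,\ell\}}}$ for one conveniently chosen pair $\{j,\ell\}\subset[n]\setminus\{i\}$. Here I would invoke the hypothesis $T_{_{\{1,2,3\}}} = U_{_{\{1,2,3\}}}$ together with order-equivalence of $3$-subsets. Concretely: pick any two elements $j<\ell$ of $[n]\setminus\{i\}$ together with $i$ itself, forming a $3$-subset $X$ with $i$ in some position within $X$. By regularity, $T_{_X}\sim T_{_{\{1,2,3\}}} = U_{_{\{1,2,3\}}}\sim U_{_X}$, and since $T_{_X}$ and $U_{_X}$ are order-equivalent tableaux on the same alphabet $X$, we get $T_{_X} = U_{_X}$, and in particular the rows of these two restricted tableaux corresponding to the element $i$ agree: $T(i)_{_{X\setminus\{i\}}} = U(i)_{_{X\setminus\{i\}}}$, i.e. $T(i)_{_{\{j,\ell\}}} = U(i)_{_{\{j,\ell\}}}$. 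Taking $\{j,\ell\}$ to be the two smallest elements of $[n]\setminus\{i\}$ and feeding this into Lemma~\ref{2seq-unique} yields $T(i)=U(i)$. Since $i$ was arbitrary, $T=U$.

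**The main obstacle.**
There is no deep obstacle; the argument is essentially bookkeeping. The one point that needs care is the passage ``$T_{_X}\sim U_{_X}$ and same alphabet $\Rightarrow$ $T_{_X}=U_{_X}$'': order-equivalence $N(T_{_X})=N(U_{_X})$ plus the fact that $T_{_X}$ and $U_{_X}$ are genuinely defined on the \emph{same} totally ordered set $X$ forces the relabeling map $N$ to be the same bijection on both, hence equality. One should also be slightly careful that ``restriction of a regular tableau to a subset with $\ge 4$ elements is again regular, and to a $3$-subset gives the fixed signature'' — but this is exactly the remark following the definition of regular tableau, so it may be cited. The rest is an application of the already-proved Lemma~\ref{2seq-unique} row by row.
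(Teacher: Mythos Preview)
Your reduction and your step establishing $T(i)_{_{\{j,\ell\}}}=U(i)_{_{\{j,\ell\}}}$ are fine, but the key claim on which you hang the whole argument --- that each row $T(i)$ is a regular sequence on $[n]\setminus\{i\}$ --- is false for $1<i<n$. The order-equivalence $T_{_{\{i,j,\ell\}}}\sim T_{_{\{i,j',\ell'\}}}$ identifies rows \emph{by position}, not by label: the row corresponding to $i$ sits in position $r$ in one restriction and in position $s$ in the other, where $r$ and $s$ depend on how $i$ compares to $j,\ell$ versus $j',\ell'$. So you cannot read off $T(i)_{_{\{j,\ell\}}}\sim T(i)_{_{\{j',\ell'\}}}$ from tableau regularity. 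Concretely, take the paper's example $T=T_4^\omega$ with $\omega=(ab)(d^2c^2)(ba)$: then $T(2)=(1,4,4,3,3,1)$, and $T(2)_{_{\{1,3\}}}=(1,3,3,1)$ while $T(2)_{_{\{3,4\}}}=(4,4,3,3)$, which are not order-equivalent. Thus Lemma~\ref{2seq-unique} does not apply to $T(i)$, and your final step fails. (The rows $T(1)$ and $T(n)$ \emph{are} regular sequences, for exactly the reason above --- $1$ and $n$ always occupy the same position in any $3$-subset containing them --- and the paper uses this fact later in the proof of Proposition~\ref{3tab-bal-ref}; but intermediate rows need not be.)

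The fix is already latent in your own argument. Your step~3 shows $T(i)_{_{\{j,\ell\}}}=U(i)_{_{\{j,\ell\}}}$ for \emph{every} pair $\{j,\ell\}\subset[n]\setminus\{i\}$, not just one. With that in hand you do not need Lemma~\ref{2seq-unique} at all: if $T(i)\neq U(i)$, look at the first position where they differ, say $T(i)$ has $j$ and $U(i)$ has $k$ there; then $T(i)_{_{\{j,k\}}}\neq U(i)_{_{\{j,k\}}}$, a contradiction. This is precisely the paper's proof, just organized in the opposite order (the paper starts from the disagreement and then invokes your step~3 for the single relevant pair $\{j,k\}$).
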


\begin{proof}
Suppose the lemma fails.  Then there is some $i\in [n]$ such that $T(i) \neq U(i)$.
Consider the first entry where they differ; say the $m$'th entry of $T(i)$ is $j$ and the $m$'th entry of $U(i)$ is $k$. 
As in the proof of Lemma~\ref{2seq-unique}, $T(i)_{_{\{j,k\}}} \neq U(i)_{_{\{j,k\}}}$, so $T_{_{\{i,j,k\}}} \neq U_{_{\{i,j,k\}}}$.  
But this is impossible, since these are both the unique tableau on $\{i,j,k\}$ which is order equivalent to $ T_{_{\{1,2,3\}}}$.
\end{proof}

A sequence 
$(s_1, s_2, \dots, s_n) \in \mb{N}^\star$ is a \df{refinement} of $(t_1, t_2, \dots,t_m)$ 
when there exist integers $i_j$ such that 
\[ (t_1,\;t_2,\;\dots)\; =\; (s_1+\dots+s_{i_1},\ s_{i_1+1}+\dots+s_{i_2},\ \dots). \]
The \df{exponent sequence} of a letter $a$ in a word $\omega$ is the sequence of sizes of consecutive occurrences of $a$ in $\omega$, and is denoted by $\exp_a(\omega)$.  
For instance, $\exp_a(a^2bac^3a^5) = (2,1,5)$.

\begin{prop}\label{3tab-bal-ref}
For any $n\geq 4$ and $U \in [3]^\tabox$, there is a regular tableau $T$ on $[n]$ with $T_{_{\{1,2,3\}}} = U$
if and only if the following hold. 
\begin{enumerate}
\item Rows $U(1)$ and $U(3)$ are balanced.
\item The block sizes of $U(1)$ are a refinement of $\exp_3(U(2))$.
\item The block sizes of $U(3)$ are a refinement of $\exp_1(U(2))$.
\end{enumerate}
\end{prop}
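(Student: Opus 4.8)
The plan is to exploit Lemma~\ref{3tab-unique}: a regular tableau on $[n]$ is determined by its restriction to $\{1,2,3\}$, so the statement is really a characterisation of which $U\in[3]^\tabox$ arise as $T_{_{\{1,2,3\}}}$ for some regular $T$ on $[n]$. I would prove the two directions separately; write $\langle U(1)\rangle=(r_1,\dots,r_k)$.

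\emph{Necessity.} Suppose $T$ is regular on $[n]$ with $T_{_{\{1,2,3\}}}=U$. Because $1$ is the least element of every subset of $[n]$ containing it, the equalities $N(T_{_X})=N(T_{_Y})$ over $3$-subsets $X,Y\ni1$ say precisely that all two-element restrictions of the single row $T(1)$ are order equivalent; hence $T(1)$ is a regular sequence on $\{2,\dots,n\}$, and Proposition~\ref{2reg} forces $U(1)=T(1)_{_{\{2,3\}}}$ to be balanced. Symmetrically, using that $n$ is the greatest element, $T(n)$ is a regular sequence on $[n-1]$, so $U(3)=T(n)_{_{\{1,2\}}}$ is balanced; this is~(1). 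For~(2), matching elements by rank in $T_{_{\{1,2,j\}}}\sim U$ and $T_{_{\{2,j,k\}}}\sim U$ (for $3\le j<k\le n$) yields $T(2)_{_{\{1,j\}}}\sim U(2)$ for every $j\ge3$ and $T(2)_{_{\{j,k\}}}\sim U(1)$ for all such $j,k$; in particular $T(2)_{_{\{1,3\}}}=U(2)$, while $T(2)_{_{\{3,4\}}}$ is balanced with block sizes $\langle U(1)\rangle$. Now $T(2)_{_{\{1,3\}}}$ and $T(2)_{_{\{1,4\}}}$ are order equivalent and are restrictions of the \emph{same} word $T(2)$, so for each occurrence of $1$ in $T(2)$ the number of preceding $3$'s equals the number of preceding $4$'s. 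By Corollary~\ref{bal-concat}, in the balanced word $T(2)_{_{\{3,4\}}}$ a prefix has equally many $3$'s and $4$'s only when it is a concatenation of whole blocks; hence every $1$ in $T(2)$ sits at a block boundary of $T(2)_{_{\{3,4\}}}$, and in particular the $r_\ell$ occurrences of $3$ coming from the $\ell$-th block are never separated by a $1$. Therefore each run of $3$ in $U(2)=T(2)_{_{\{1,3\}}}$ (obtained from $T(2)$ by deleting $4,\dots,n$) is a union of consecutive such blocks, i.e.\ $\langle U(1)\rangle$ is a refinement of $\exp_3(U(2))$. Finally~(3) follows by applying~(2) to the tableau obtained from $T$ by the relabelling $i\mapsto n+1-i$, under which the three conditions are permuted so that~(2) becomes~(3).

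\emph{Sufficiency.} Given $U$ satisfying~(1)--(3), I would construct $T$ row by row using the blow-up map of Proposition~\ref{2reg}. Let $T(1)$ be the regular sequence on $\{2,\dots,n\}$ with $T(1)_{_{\{2,3\}}}=U(1)$ and $T(n)$ the regular sequence on $[n-1]$ with $T(n)_{_{\{1,2\}}}=U(3)$ (these exist and are unique by Corollary~\ref{2reg-cor}). For $1<i<n$, condition~(2) lets us group the balanced blocks of the regular sequence on $\{i+1,\dots,n\}$ determined by $U(1)$ into consecutive groups whose total sizes are exactly the terms of $\exp_3(U(2))$, and~(3) does the same for the regular sequence on $\{1,\dots,i-1\}$ determined by $U(3)$ and the terms of $\exp_1(U(2))$; define $T(i)$ by scanning $U(2)$ from left to right and, for its $s$-th run of $3$'s (resp.\ of $1$'s), emitting the $s$-th group of ``upper'' blocks (resp.\ of ``lower'' blocks). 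By construction $T(i)_{_{\{1,\dots,i-1\}}}$ and $T(i)_{_{\{i+1,\dots,n\}}}$ are the prescribed regular sequences, and $T(i)_{_{\{j,k\}}}\sim U(2)$ for every $j<i<k$. It then remains to check $N(T_{_X})=N(U)$ for every $3$-subset $X=\{a,b,c\}$ with $a<b<c$: the three rows $T(a)_{_{\{b,c\}}}$, $T(b)_{_{\{a,c\}}}$, $T(c)_{_{\{a,b\}}}$ of $T_{_X}$ are, by the construction together with Corollary~\ref{2reg-cor}, order equivalent to $U(1)$, $U(2)$, $U(3)$ respectively. Hence $T$ is a regular tableau on $[n]$ with $T_{_{\{1,2,3\}}}=U$.

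The step I expect to be the main obstacle is the sufficiency direction: arranging the middle rows so that a single word $T(i)$ restricts correctly onto $\{1,\dots,i-1\}$, onto $\{i+1,\dots,n\}$, and onto \emph{every} straddling pair simultaneously. This is exactly where~(2) and~(3) must be used in an essential way, since they are what guarantees that the required interleaving of lower and upper blocks actually exists (a bare cardinality constraint would not suffice). A secondary technical point, inside the necessity argument, is isolating the clean identity ``equally many $3$'s and $4$'s precede every $1$'' and feeding it into the balanced-block dictionary of Corollary~\ref{bal-concat}.
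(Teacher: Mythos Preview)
Your argument is correct and is essentially the paper's proof: the necessity of~(2) via ``equal $3$- and $4$-counts before each $1$'' together with Corollary~\ref{bal-concat} is exactly the paper's argument, and your row-by-row construction for sufficiency is precisely the paper's map $\phi_n$ with the auxiliary alphabet $\{a,b,c,d\}$ stripped away. The only caveat is that your phrase ``the regular sequence on $\{1,\dots,i-1\}$'' is not literally defined for $i\le 3$ (and dually for $i\ge n-2$), but the construction degenerates correctly there, as the paper's uniform definition of $\phi_{i,n}$ makes explicit.
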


The proof will provide a correspondence analogous to that of Corollary~\ref{2reg-cor}. Let $\{a,b,c,d\}$ be an ordered alphabet with $a\prec b \prec c \prec d$, and let $B_{_{(ab,cd)}} = B_{_{(ab)}} \cup B_{_{(cd)}}$. 
Roughly speaking, the first and the last rows of a regular tableau are regular sequences corresponding to words $\gamma \in B\rlap{$^\star$}_{_{(cd)}}$ and  $\alpha \in B\rlap{$^\star$}_{_{(ab)}}$, respectively. The intermediate rows are an interpolation between the first and the last rows described by interlacing the balanced blocks of $\alpha$ and $\gamma$.

\begin{corollary}\label{tab-corr1}
Let $n\geq 4$ be fixed. The set of regular tableaux on $[n]$ is in bijective correspondence with 
the language $B\rlap{$^\star$}_{_{(ab,cd)}}$. 
\end{corollary}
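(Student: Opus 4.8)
\textbf{Proof plan for Corollary~\ref{tab-corr1}.}

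The plan is to build the bijection as a composition of the two natural maps furnished by Proposition~\ref{3tab-bal-ref} together with an explicit encoding of a triple $(\alpha,\gamma,\text{interlacing})$ by a single word in $B\rlap{$^\star$}_{_{(ab,cd)}}$. First I would fix $n\geq 4$ and invoke Lemma~\ref{3tab-unique}: a regular tableau $T$ on $[n]$ is determined by its restriction $T_{_{\{1,2,3\}}} \in [3]^\tabox$, and conversely every $U\in[3]^\tabox$ satisfying conditions (1)--(3) of Proposition~\ref{3tab-bal-ref} extends (uniquely) to such a $T$. So it suffices to exhibit a bijection between the set of $U\in[3]^\tabox$ satisfying (1)--(3) and the language $B\rlap{$^\star$}_{_{(ab,cd)}}$. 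This reduces the problem to a purely one-step combinatorial encoding and removes $n$ from the picture entirely, which is the main simplification.

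Next I would describe the encoding. Given such a $U$, row $U(3)$ is balanced on $\{1,2\}$, so by Corollary~\ref{2reg-cor} it corresponds to a word $\alpha\in B\rlap{$^\star$}_{_{(ab)}}$ (reading $1\mapsto a$, $2\mapsto b$), and row $U(1)$ is balanced on $\{2,3\}$, corresponding to $\gamma\in B\rlap{$^\star$}_{_{(cd)}}$ (reading $2\mapsto c$, $3\mapsto d$). The content of conditions (2) and (3) is that $\langle U(1)\rangle = \langle\gamma\rangle$ refines $\exp_3(U(2))$ and $\langle U(3)\rangle = \langle\alpha\rangle$ refines $\exp_1(U(2))$; since the middle row $U(2)$ is itself a word on $\{1,3\}$ whose maximal $1$-runs and $3$-runs alternate, the pair of refinement data is exactly the combinatorial information of how the balanced blocks of $\alpha$ and the balanced blocks of $\gamma$ are \emph{interleaved} along $U(2)$. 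I would make this precise by observing that one can read off, from $U(2)$ and the block structures, a single word $\beta\in B\rlap{$^\star$}_{_{(ab,cd)}}$ obtained by listing the balanced blocks of $\alpha$ and of $\gamma$ in the order their corresponding runs (of $1$'s, resp.\ $3$'s) appear in $U(2)$; conversely, from any $\beta\in B\rlap{$^\star$}_{_{(ab,cd)}}$ one recovers $\alpha$ (the $\{a,b\}$-subword), $\gamma$ (the $\{c,d\}$-subword), and the interleaving pattern, hence $U(2)$ (group the consecutive $\alpha$-blocks into $1$-runs and consecutive $\gamma$-blocks into $3$-runs), hence $U(1)=\phi$-image of $\gamma$ on three symbols and $U(3)$ likewise from $\alpha$. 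Checking that this $U$ satisfies (1)--(3) is immediate from the construction, and that the two assignments are mutually inverse is a bookkeeping verification.

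The step I expect to be the main obstacle is verifying that the map $U\mapsto\beta$ is well defined and injective, i.e.\ that conditions (2) and (3) really do package \emph{all} the freedom in $U(2)$ and nothing more: one must rule out the possibility that two different middle rows $U(2)$, $U(2)'$ compatible with the same $\alpha$ and $\gamma$ give the same interleaving word $\beta$, and also check the boundary/parity constraints (the first block of $\beta$ must be of the correct type matching whether $U(2)$ starts with a $1$-run or a $3$-run, and the refinement conditions force the run lengths to match the summed block sizes, with no leftover). This amounts to showing that $U(2)$ is uniquely reconstructible from the pair of refinements of its $1$-exponents and $3$-exponents, which is true precisely because $U(2)$ alternates between $1$-runs and $3$-runs — but one should be careful about the edge cases where a block of $\alpha$ straddles what would be a $3$-run, which conditions (2)--(3) are exactly designed to forbid. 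Once this reconstruction is pinned down, the corollary follows by combining it with Lemma~\ref{3tab-unique} and Proposition~\ref{3tab-bal-ref} as above.
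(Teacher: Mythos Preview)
Your proposal is correct and follows essentially the same route as the paper. The paper packages the bijection as the explicit map $\phi_n\colon B\rlap{$^\star$}_{_{(ab,cd)}}\to[n]^\tabox$ defined inside the proof of Proposition~\ref{3tab-bal-ref} (on balanced blocks, then extended by concatenation), observes that $\phi_n$ is injective because each $\phi_{i,n}$ is, and obtains surjectivity onto regular tableaux from Proposition~\ref{3tab-bal-ref} together with Lemma~\ref{3tab-unique}; your construction is exactly the inverse of this $\phi_n$, and your ``main obstacle'' (well-definedness of $U\mapsto\beta$) is handled in the paper simply by noting $U(2)\sim\omega_{_{\{a,c\}}}$, which makes the reconstruction immediate without any case analysis on runs.
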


\begin{proof}[Proof of Proposition \ref{3tab-bal-ref}]
We define a function $\phi_{_n} \colon B\rlap{$^\star$}_{_{(ab,cd)}} \to [n]^\tabox$ such that the range of $\phi_{_n}$ consists of all regular tableaux on $[n]$.  
The rows the tableau are given by $\phi_{_{i,n}}\colon B\rlap{$^\star$}_{_{(ab,cd)}} \to [n]^\star$; that is,  
\[\phi_{_n}(\omega) = (\phi_{_{1,n}}(\omega),\phi_{_{2,n}}(\omega),\dots,\phi_{_{n,n}}(\omega)).\]
We first define $\phi_{_{i,n}}$ on balanced blocks as
\[\begin{array}{rclll}
  \phi_{_{i,n}}(a^rb^r) & \defeq & 
  (\underset{^{\times r}}{1, \cdots, 1}, \cdots, 
  \underset{^{\times r}}{i-1, \cdots, i-1})  &\text{for $i>1$,} &
  \phi_{_{1,n}}(a^rb^r) = \emptyset\\
  \phi_{_{i,n}}(b^ra^r) & \defeq & 
  (\underset{^{\times r}}{i-1, \cdots, i-1}, \cdots,
  \underset{^{\times r}}{1, \cdots, 1})  &\text{for $i>1$,} &
  \phi_{_{1,n}}(b^ra^r) = \emptyset \\ 
  \phi_{_{i,n}}(c^rd^r) & \defeq & 
  (\underset{^{\times r}}{i+1, \cdots, i+1}, \cdots, 
  \underset{^{\times r}}{n, \cdots, n})  &\text{for $i<n$,} &
  \phi_{_{n,n}}(c^rd^r) = \emptyset \\ 
  \phi_{_{i,n}}(d^rc^r) & \defeq & 
  (\underset{^{\times r}}{n, \cdots, n}, \cdots,
  \underset{^{\times r}}{i+1, \cdots, i+1})  &\text{for $i<n$,} &
  \phi_{_{n,n}}(d^rc^r) = \emptyset. 
\end{array}\]
Extend $\phi_{_{i,n}}$ to arbitrary words 
$\omega \in B\rlap{$^\star$}_{_{(ab,cd)}}$ by concatenation in the following way: 
For $\omega = \omega_1 \cdots \omega_k$ where 
$\omega_j \in B_{_{(ab,cd)}}$, let 
$\phi_{_{i,n}}(\omega)  \defeq \phi_{_{i,n}}(\omega_1)  \cdots \phi_{_{i,n}}(\omega_k)$. Since the
functions $\phi_{_{i,n}}$ are injective, the function $\phi_{_n}$ is also injective. 
Let $T^\omega = \phi_{_n}(\omega)$.
To see that the tableau is regular, let $X = \{i,j,k\}$ with 
$1\leq i < j < k \leq n$.  
Now we have $T^\omega(i)_{_X} \sim \omega_{_{\{c,d\}}}$, 
$T^\omega(j)_{_X} \sim \omega_{_{\{a,c\}}}$, and 
$T^\omega(k)_{_X} \sim \omega_{_{\{a,b\}}}$.

Let $U \in [3]^\tabox$ satisfy conditions {\em (1), (2)} and {\em (3)}. 
We will define $\omega$ such that $T\rlap{$^\omega$}_{_{\{1,2,3\}}}=U$
by interlacing the balanced blocks of words in $B\rlap{$^\star$}_{_{(cd)}}$ and $B\rlap{$^\star$}_{_{(ab)}}$ corresponding to $U(1)$ and $U(3)$ according to the refinements of the exponent sequences of $U(2)$. Let 
\[U(2) = 
(\underset{^{\times p_1}}{1, \cdots, 1},
\underset{^{\times q_1}}{3, \cdots, 3},
\underset{^{\times p_2}}{1, \cdots, 1},
\underset{^{\times q_2}}{3, \cdots, 3}, \cdots)\] 
where $p_1 \geq 0$ and any subsequent defined terms $q_i$ or $p_i$ are positive. 
Let $\alpha \in \{a,b\}^\star$ be order equivalent to $U(3)$, and $\gamma \in \{c,d\}^\star$ be order equivalent to $U(1)$. 
Let $\alpha = \alpha_1 \alpha_2 \cdots$ and $\gamma = \gamma_1 \gamma_2 \cdots$ such that 
$|\alpha_i| = 2p_i$ and $|\gamma_i| = 2q_i$, and let $\omega = \alpha_1 \gamma_1 \alpha_2 \gamma_2 \cdots$.

We first show $\omega \in B\rlap{$^\star$}_{_{(ab,cd)}}$.  
The block sizes of $\gamma \sim U(1)$ are a common refinement of $(q_1,q_2,\dots) = \exp_3(S_2)$.  This implies that the words $\gamma_i$ are balanced, and likewise the $\alpha_i$ are balanced, so $\omega \in B\rlap{$^\star$}_{_{(ab,cd)}}$. 
We now show $T\rlap{$^\omega$}_{_{\{1,2,3\}}}=U$. 
By construction, $\omega_{_{\{c,d\}}} = \gamma \sim U(1)$, so $T^\omega(1)_{_{\{2,3\}}} = U(1)$, 
and similarly $T^\omega(3)_{_{\{1,2\}}} = U(3)$. 
Lastly, $\omega_{_{\{a,c\}}} \sim U(2)$, since $|\alpha_{i_{\{a\}}}| = p_i$ and $|\gamma_{i_{\{c\}}}| = q_i$, 
so $T^\omega(2)_{_{\{1,3\}}} = U(2)$.
Thus, we have a regular tableau $T^\omega$ such that $T\rlap{$^\omega$}_{_{\{1,2,3\}}}=U$.

For the other direction let $T$ be a regular tableau on $[n]$, and let $U = T_{_{\{1,2,3\}}}$.

We first show that $U(1)$ and $U(3)$ are balanced.
We claim that $T(1)$ and $T(n)$ are regular sequences on $[n]_1^+$ and $[n]_n^-$, respectively.
For any $X = \{1,i, j\}$ and $Y =\{1,i',j'\}$, the regularity of $T$ implies that
$T(1)_{_X} \sim T(1)_{_Y}$. Therefore $T(1)$ is regular on $[n]_1^+$. The same 
reasoning, restricting to sets $X = \{i,j,n\}$ and $Y = \{i',j',n\}$, shows $T(n)$ is regular. 
Now, by Proposition~\ref{2reg}, $U(1) = T(1)_{_{\{2,3\}}}$ is balanced, since $T(1)$ is regular, and likewise $U(3) = T(n)_{_{\{1,2\}}}$ is balanced, since $T(n)$ is regular.

Now we show that the refinements hold. 
By definition $n \geq 4$. 
Since $T_{_{\{1,2,3\}}} \sim T_{_{\{1,2,4\}}}$, 
we can let 
\[ (t_1,t_2,\dots,t_m) = \exp_3(T(2)_{_{\{1,3\}}}) = \exp_4(T(2)_{_{\{1,4\}}}), \]
We decompose $T(2)_{_{\{3,4\}}}$ into subsequences appearing between consecutive occurrences of 1 in $T(2)_{_{\{1,3,4\}}}$; that is, 
let $T(2)_{_{\{3,4\}}} = \omega_1 \cdot \omega_2 \cdots \omega_m$ such that
\[ T(2)_{_{\{1,3,4\}}} = (\dots,1,\omega_i,1,\dots,1,\omega_{i+1},1,\dots). \]
Since $T_{_{\{2,3,4\}}} \sim U$, $T(2)_{_{\{3,4\}}} \sim U(1)$ is balanced, 
and since $|\omega_{i_{\{3\}}}| = |\omega_{i_{\{4\}}}| =t_i$ for all $i$, 
each subsequence $\omega_i$ is balanced by Corollary~\ref{bal-concat}. 
Let $(s_{(i,1)},\dots,s_{(i,k_i)})$ be the block sizes of $\omega_i$. 
The block sizes of $U(1)$ are $(s_{(1,1)}.\dots,s_{(1,k_1)},s_{(2,1)},\dots,s_{(m,k_m)})$, and $s_{(i,1)}+\dots+s_{(i,k_i)} = t_i$. 
Therefore, the block sizes of $U(1)$ are a refinement of $\exp_3(U(2))$, and likewise for $U(2)$ and $\exp_1(U(2))$.
\end{proof}

\begin{example}
  Consider the word $(ab)(d^2c^2)(ba) \in B\rlap{$^\star$}_{_{(ab,cd)}}$. For $n=4$,  we get the following tableau.

\vspace{.2cm}

\begin{center}
  \begin{tikzpicture}
    \begin{scope}[scale = .4]
      \draw (0,0) --++ (6,0) --++ (0,4) --++ (-6,0) --cycle;
      \draw (1,0) --++ (0,4) (2,0) --++ (0,4) (3,0) --++ (0,4) (4,0)
      --++ (0,4) (5,0) --++ (0,4); 
      \draw (0,1) --++ (6,0) (0,2) --++ (6,0) (0,3) --++ (6,0);
      \node at (.5,.5) {\ft $4$};
      \node at (1.5,.5) {\ft $4$};
      \node at (2.5,.5) {\ft $3$};
      \node at (3.5,.5) {\ft $3$};
      \node at (4.5,.5) {\ft $2$};
      \node at (5.5,.5) {\ft $2$};
      \node at (.5,1.5) {\ft $1$};
      \node at (1.5,1.5) {\ft $4$};
      \node at (2.5,1.5) {\ft $4$};
      \node at (3.5,1.5) {\ft $3$};
      \node at (4.5,1.5) {\ft $3$};
      \node at (5.5,1.5) {\ft $1$};
      \node at (.5,2.5) {\ft $1$};
      \node at (1.5,2.5) {\ft $2$};
      \node at (2.5,2.5) {\ft $4$};
      \node at (3.5,2.5) {\ft $4$};
      \node at (4.5,2.5) {\ft $2$};
      \node at (5.5,2.5) {\ft $1$};
      \node at (.5,3.5) {\ft $1$};
      \node at (1.5,3.5) {\ft $2$};
      \node at (2.5,3.5) {\ft $3$};
      \node at (3.5,3.5) {\ft $3$};
      \node at (4.5,3.5) {\ft $2$};
      \node at (5.5,3.5) {\ft $1$};
    \end{scope}
  \end{tikzpicture}
\end{center}

Restricting to the 3-subsets of $[4]$ gives
us the following tableaux.

\vspace{.2cm}

\begin{center}
  \begin{tikzpicture}
    \begin{scope}[scale = .4]
      \draw (0,0) --++ (4,0) --++ (0,3) --++ (-4,0) --cycle;
      \draw (1,0) --++ (0,3) (2,0) --++ (0,3) (3,0) --++ (0,3) (0,1)
      --++ (4,0) (0,2) --++ (4,0); 
      \node at (.5,.5) {\ft $3$};
      \node at (1.5,.5) {\ft $3$};
      \node at (2.5,.5) {\ft $2$};
      \node at (3.5,.5) {\ft $2$};
      \node at (.5,1.5) {\ft $1$};
      \node at (1.5,1.5) {\ft $3$};
      \node at (2.5,1.5) {\ft $3$};
      \node at (3.5,1.5) {\ft $1$};
      \node at (.5,2.5) {\ft $1$};
      \node at (1.5,2.5) {\ft $2$};
      \node at (2.5,2.5) {\ft $2$};
      \node at (3.5,2.5) {\ft $1$};
    \end{scope}
    \begin{scope}[scale = .4, xshift = 8cm]
      \draw (0,0) --++ (4,0) --++ (0,3) --++ (-4,0) --cycle;
      \draw (1,0) --++ (0,3) (2,0) --++ (0,3) (3,0) --++ (0,3) (0,1)
      --++ (4,0) (0,2) --++ (4,0); 
      \node at (.5,.5) {\ft $4$};
      \node at (1.5,.5) {\ft $4$};
      \node at (2.5,.5) {\ft $2$};
      \node at (3.5,.5) {\ft $2$};
      \node at (.5,1.5) {\ft $1$};
      \node at (1.5,1.5) {\ft $4$};
      \node at (2.5,1.5) {\ft $4$};
      \node at (3.5,1.5) {\ft $1$};
      \node at (.5,2.5) {\ft $1$};
      \node at (1.5,2.5) {\ft $2$};
      \node at (2.5,2.5) {\ft $2$};
      \node at (3.5,2.5) {\ft $1$};
    \end{scope}
    \begin{scope}[scale = .4, xshift = 16cm]
      \draw (0,0) --++ (4,0) --++ (0,3) --++ (-4,0) --cycle;
      \draw (1,0) --++ (0,3) (2,0) --++ (0,3) (3,0) --++ (0,3) (0,1)
      --++ (4,0) (0,2) --++ (4,0); 
      \node at (.5,.5) {\ft $4$};
      \node at (1.5,.5) {\ft $4$};
      \node at (2.5,.5) {\ft $3$};
      \node at (3.5,.5) {\ft $3$};
      \node at (.5,1.5) {\ft $1$};
      \node at (1.5,1.5) {\ft $4$};
      \node at (2.5,1.5) {\ft $4$};
      \node at (3.5,1.5) {\ft $1$};
      \node at (.5,2.5) {\ft $1$};
      \node at (1.5,2.5) {\ft $3$};
      \node at (2.5,2.5) {\ft $3$};
      \node at (3.5,2.5) {\ft $1$};
    \end{scope}
    \begin{scope}[scale = .4, xshift = 24cm]
      \draw (0,0) --++ (4,0) --++ (0,3) --++ (-4,0) --cycle;
      \draw (1,0) --++ (0,3) (2,0) --++ (0,3) (3,0) --++ (0,3) (0,1)
      --++ (4,0) (0,2) --++ (4,0); 
      \node at (.5,.5) {\ft $4$};
      \node at (1.5,.5) {\ft $4$};
      \node at (2.5,.5) {\ft $3$};
      \node at (3.5,.5) {\ft $3$};
      \node at (.5,1.5) {\ft $2$};
      \node at (1.5,1.5) {\ft $4$};
      \node at (2.5,1.5) {\ft $4$};
      \node at (3.5,1.5) {\ft $2$};
      \node at (.5,2.5) {\ft $2$};
      \node at (1.5,2.5) {\ft $3$};
      \node at (2.5,2.5) {\ft $3$};
      \node at (3.5,2.5) {\ft $2$};
    \end{scope}
  \end{tikzpicture}
\end{center}
\end{example}

\bigskip

Let $T^{\omega}_n \defeq \phi_n(\omega)$ denote the regular tableau on $[n]$ corresponding to $\omega \in B\rlap{$^\star$}_{_{(ab,cd)}}$. Note that $T^\omega_n$ is well-defined for all $n\geq 3$, and that $T^\omega_n$ is regular for all $n\geq 4$. Notice that $T_5^\omega(3) \sim \omega$. We point out two further consequences of the proof of Proposition~\ref{3tab-bal-ref}. 

\begin{corollary} \label{reg-line}
If $T = T^\omega_n$ is a regular tableau and $X$ is a $3$-subset of $[n]$, then $T_{_X} \sim T^\omega_3$. 
\end{corollary}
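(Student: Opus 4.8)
The plan is to observe that this corollary is an immediate consequence of computations already carried out in the proof of Proposition~\ref{3tab-bal-ref}, once we notice those computations are insensitive to the ambient size $n$. Recall from that proof that the rows of $T^\omega_n = \phi_n(\omega)$ are built blockwise, and that for every $3$-subset $X = \{i,j,k\}$ with $i<j<k$ one has
\[ T^\omega_n(i)_{_X} \sim \omega_{_{\{c,d\}}}, \qquad T^\omega_n(j)_{_X} \sim \omega_{_{\{a,c\}}}, \qquad T^\omega_n(k)_{_X} \sim \omega_{_{\{a,b\}}}. \]
First I would point out that the verification of these three order equivalences uses only the definition of $\phi_{_{i,n}}$ on balanced blocks together with its extension by concatenation, and never the value of $n$ beyond the requirement that the relevant indices lie in $[n]$. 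In particular the same equivalences hold for $n = 3$, where the only $3$-subset is $[3]$ itself and the restrictions are trivial, giving
\[ T^\omega_3(1) \sim \omega_{_{\{c,d\}}}, \qquad T^\omega_3(2) \sim \omega_{_{\{a,c\}}}, \qquad T^\omega_3(3) \sim \omega_{_{\{a,b\}}}. \]

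Next I would combine the two displays. For the given $X = \{i,j,k\}$, the rows of $T_{_X}$ sitting in positions $i < j < k$ are order equivalent to the rows of $T^\omega_3$ sitting in positions $1 < 2 < 3$, respectively. To upgrade these row-by-row equivalences to $T_{_X} \sim T^\omega_3$, it remains to check that $N(T_{_X}) = N(T^\omega_3)$. This is automatic: the increasing enumeration of the alphabet $X$ sends $i \mapsto 1$, $j \mapsto 2$, $k \mapsto 3$, which is precisely the enumeration of $[3]$, so applying $N$ to $T_{_X}$ and to $T^\omega_3$ produces in both cases the tableau on $[3]$ whose rows are $N(\omega_{_{\{c,d\}}})$, $N(\omega_{_{\{a,c\}}})$, $N(\omega_{_{\{a,b\}}})$ in rows $1$, $2$, $3$ --- the letters $a \prec b \prec c \prec d$ being fixed, these three normalized words are the same in both computations. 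Hence $T_{_X} \sim T^\omega_3$.

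There is essentially no obstacle here; the statement is a bookkeeping byproduct of the construction of $\phi_n$. The only point requiring a moment of care is the last step --- that order equivalence of each corresponding pair of rows lifts to order equivalence of the full tableaux --- and this rests solely on the rank of $i$, $j$, $k$ within $X$ agreeing with $1$, $2$, $3$, which is forced. Equivalently, one could simply say that $T^\omega_3$ is by construction the $N$-normal form common to all the restrictions $\phi_n(\omega)_{_X}$, but writing out the three row equivalences as above makes the claim fully transparent.
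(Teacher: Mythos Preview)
Your proof is correct and follows exactly the approach the paper intends: the corollary is stated there as an immediate consequence of the row-by-row computations $T^\omega_n(i)_{_X} \sim \omega_{_{\{c,d\}}}$, etc., carried out in the proof of Proposition~\ref{3tab-bal-ref}, and you have simply made explicit the (trivial) specialization to $n=3$ and the passage from rowwise to tableau order equivalence. One cosmetic slip: the common $N$-image of the tableaux is not literally the tableau with rows $N(\omega_{_{\{c,d\}}})$, $N(\omega_{_{\{a,c\}}})$, $N(\omega_{_{\{a,b\}}})$ (those are words on $\{1,2\}$, whereas the rows of $N(T^\omega_3)=T^\omega_3$ live on $\{2,3\}$, $\{1,3\}$, $\{1,2\}$ respectively); but your actual argument---that the two rows in each position are words on the \emph{same} two-letter alphabet and are both order equivalent to the same $\omega$-restriction, hence equal---is unaffected.
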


\begin{corollary}\label{tabconc}
If $T^{\omega_1}_n$ and $T^{\omega_2}_n$ are regular tableaux, then
$T^{\omega_1}_n \cdot T^{\omega_2}_n = T^{\omega_1\cdot\omega_2}_n$. 
\end{corollary}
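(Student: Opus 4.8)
The plan is to recognize that Corollary~\ref{tabconc} is essentially a restatement of how $\phi_n$ was \emph{constructed}: it was defined on the balanced blocks $B_{_{(ab,cd)}}$ and then extended to all of $B\rlap{$^\star$}_{_{(ab,cd)}}$ by declaring it to respect concatenation. In other words, each row map $\phi_{_{i,n}}$, and hence $\phi_n$ itself, is a homomorphism from $B\rlap{$^\star$}_{_{(ab,cd)}}$ (viewed as a monoid under concatenation of words) to $[n]^\tabox$ (viewed as a monoid under the concatenation of tableaux). So no new idea is needed beyond unwinding the definitions given in the proof of Proposition~\ref{3tab-bal-ref}.

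Concretely, first I would note that $\omega_1,\omega_2\in B\rlap{$^\star$}_{_{(ab,cd)}}$ implies $\omega_1\cdot\omega_2\in B\rlap{$^\star$}_{_{(ab,cd)}}$, since a concatenation of balanced blocks followed by another concatenation of balanced blocks is again such a concatenation; in particular $T^{\omega_1\cdot\omega_2}_n$ is defined, and it is a regular tableau when $n\ge 4$. Next, fix decompositions $\omega_1=\beta_1\cdots\beta_k$ and $\omega_2=\gamma_1\cdots\gamma_\ell$ with each $\beta_p,\gamma_q\in B_{_{(ab,cd)}}$. Then $\beta_1,\dots,\beta_k,\gamma_1,\dots,\gamma_\ell$ is a decomposition of $\omega_1\cdot\omega_2$ into balanced blocks, so by the definition of the extension of $\phi_{_{i,n}}$ (and the fact, noted in the proof of Proposition~\ref{3tab-bal-ref}, that $\phi_{_{i,n}}$ is well defined, i.e.\ independent of the chosen block decomposition),
\[ \phi_{_{i,n}}(\omega_1\cdot\omega_2)=\phi_{_{i,n}}(\beta_1)\cdots\phi_{_{i,n}}(\beta_k)\cdot\phi_{_{i,n}}(\gamma_1)\cdots\phi_{_{i,n}}(\gamma_\ell)=\phi_{_{i,n}}(\omega_1)\cdot\phi_{_{i,n}}(\omega_2) \]
for every $i\in[n]$. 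Finally I would assemble the rows: from $T^\omega_n=\phi_n(\omega)=(\phi_{_{1,n}}(\omega),\dots,\phi_{_{n,n}}(\omega))$ and the definition $(T_1\cdot T_2)(i)=T_1(i)\cdot T_2(i)$ of tableau concatenation, the displayed row-wise identity gives $T^{\omega_1\cdot\omega_2}_n(i)=(T^{\omega_1}_n\cdot T^{\omega_2}_n)(i)$ for all $i$, hence $T^{\omega_1}_n\cdot T^{\omega_2}_n=T^{\omega_1\cdot\omega_2}_n$.

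There is no real obstacle here: the only point needing any care is the well-definedness of $\phi_{_{i,n}}$, that is, that its value on a balanced word does not depend on how that word is parsed into balanced blocks — and this is already implicit in the essentially greedy left-to-right parsing in the proof of Lemma~\ref{bal-def}. Once that is granted the corollary is purely formal; I would also remark that the identity in fact holds verbatim for all $n\ge 3$, not just $n\ge 4$, and that together with Corollary~\ref{tab-corr1} it says precisely that the bijection $\omega\mapsto T^\omega_n$ is an isomorphism of monoids.
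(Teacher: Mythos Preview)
Your proposal is correct and is exactly the argument the paper has in mind: the corollary is stated without proof as an immediate consequence of the construction of $\phi_{_n}$ in the proof of Proposition~\ref{3tab-bal-ref}, where each $\phi_{_{i,n}}$ is extended to $B\rlap{$^\star$}_{_{(ab,cd)}}$ by concatenation. Your write-up simply makes this explicit, and your observation about well-definedness and the extension to $n\geq 3$ is accurate.
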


A tableau $T \in A^\tabox$ is called \df{pangrammatic} when every $a_i
\in A$ occurs at least once in $T$. In the following sections all
regular tableau are assumed to be pangrammatic. Let $W_{_{(ab,cd)}}$
be the language of words $\omega \in B\rlap{$^\star$}_{_{(ab,cd)}}$
such that $\omega \notin B\rlap{$^\star$}_{_{(ab)}}$ and $\omega
\notin B\rlap{$^\star$}_{_{(cd)}}$. 

\begin{prop}\label{tab-corr}
Let $n\geq 4$ be fixed. The set of pangrammatic regular tableaux on
$[n]$ is in bijective correspondence with the language $W_{_{(ab,cd)}}$. 
\end{prop}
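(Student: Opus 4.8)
The plan is to bootstrap off Corollary~\ref{tab-corr1}, which already identifies all regular tableaux on $[n]$ with the language $B\rlap{$^\star$}_{_{(ab,cd)}}$ via the injective map $\omega \mapsto T^\omega_n = \phi_n(\omega)$. Since every pangrammatic regular tableau is in particular a regular tableau, it suffices to prove that for $\omega \in B\rlap{$^\star$}_{_{(ab,cd)}}$ the tableau $T^\omega_n$ is pangrammatic if and only if $\omega \in W_{_{(ab,cd)}}$; the asserted bijection is then simply the restriction of the bijection of Corollary~\ref{tab-corr1} to this sublanguage.

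First I would isolate the following observation, read straight off the defining formulas for $\phi_{_{i,n}}$ on balanced blocks: a block from $B_{_{(ab)}}$ contributes to row $i$ of $T^\omega_n$ only letters from $\{1,\dots,i-1\}$ (and nothing at all to row $1$), whereas a block from $B_{_{(cd)}}$ contributes to row $i$ only letters from $\{i+1,\dots,n\}$ (and nothing to row $n$). In particular the letter $1$ can occur in $T^\omega_n$ only by virtue of some $(ab)$-block appearing in $\omega$, and the letter $n$ only by virtue of some $(cd)$-block. This settles one direction at once: if $\omega\notin W_{_{(ab,cd)}}$, then either $\omega\in B\rlap{$^\star$}_{_{(ab)}}$, in which case $n$ never appears in $T^\omega_n$, or $\omega\in B\rlap{$^\star$}_{_{(cd)}}$, in which case $1$ never appears; in both cases $T^\omega_n$ fails to be pangrammatic.

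For the converse, suppose $\omega\in W_{_{(ab,cd)}}$, so $\omega$ contains at least one $(ab)$-block and at least one $(cd)$-block. For each $m$ with $1\le m\le n-1$, the chosen $(ab)$-block puts all of $\{1,\dots,m\}$ into row $m+1$ of $T^\omega_n$ — a row that exists because $m+1\le n$ — so $m$ occurs; and $n$ occurs because the chosen $(cd)$-block puts $n$ into row $1$ (using $n\ge 2$). Hence every element of $[n]$ occurs and $T^\omega_n$ is pangrammatic. Combining the two directions with the injectivity of $\phi_n$ and with Corollary~\ref{tab-corr1}, the map $\omega\mapsto T^\omega_n$ restricts to a bijection from $W_{_{(ab,cd)}}$ onto the set of pangrammatic regular tableaux on $[n]$. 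I do not anticipate a genuine obstacle here: the argument is entirely bookkeeping on top of Corollary~\ref{tab-corr1}, and the only point requiring a little care is that the two extremal letters $1$ and $n$ are precisely the ones that a single block type cannot supply, so both an $(ab)$-block and a $(cd)$-block are genuinely needed — everything in between is covered automatically, since $n\ge 4$ guarantees that all the intermediate rows one invokes actually exist.
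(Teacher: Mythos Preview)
Your proposal is correct and takes essentially the same approach as the paper: both arguments restrict the bijection of Corollary~\ref{tab-corr1} and use the observation, read off the formulas for $\phi_{_{i,n}}$ on balanced blocks, that the letter $n$ appears in $T^\omega_n$ only via some $(cd)$-block and the letter $1$ only via some $(ab)$-block. The paper's proof is extremely terse (it states only the key observation and leaves the reader to infer the rest), whereas you have spelled out both directions explicitly.
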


\begin{proof}
Observe from the definition of $\phi_{_{n}}$ in the proof of
Proposition~\ref{3tab-bal-ref} that for $\omega \in B_{_{(ab)}}$, the
integers occurring in $T^\omega_n$ are $\{1,\dots,n-1\}$, and for
$\omega \in B_{_{(cd)}}$, the integers occurring in $T^\omega_n$ are
$\{2,\dots,n\}$.  
\end{proof}

\section{Geometric tableaux} \label{sec:GeomTab}

\subsection{Tableaux and local sequences}
Let $S$ be a system labeled by $[n]$, and let $\omega_i$ denote the
local sequence of path $i$. We may associate the local sequences of
$S$ with a tableau $T = (\omega_1, \dots, \omega_n)$ on $[n]$. We call
$T$ the tableau \df{associated} to $S$. Clearly $T \in
[n]^\tabox$. For us, the converse direction is important. That is,
when does a tableau on $[n]$ correspond to the local sequences of a
system of paths? 

\begin{define}
  Let $T \in [n]^\tabox$. We say $T$ is \df{geometric} if it is the
  associated tableau of a system. 
\end{define}

We can decide if a tableau is geometric by a simple algorithm.

\begin{algo} \label{bumpalgo}
The input is a tableau $T$ on $[n]$. The output is {\tt True} if $T$
is geometric and {\tt False} otherwise.  

\begin{algorithmic}
\STATE $T' \leftarrow T$
\STATE $\pi\hspace{4pt} \leftarrow (1,\dots,n)$. 
\WHILE{ $T' \neq (\emptyset,\dots,\emptyset)$ }
\IF{
There exists a lexicographically minimal pair of integers $(j,k)$ that
satisfy the following. 
    \begin{itemize}
    \item \hspace{-12pt} $j$ and $k$ are adjacent in $\pi$.
    \item \hspace{-12pt} $T'(j) = (k) \cdot \omega_j $.
    \item \hspace{-12pt} $T'(k) = (j) \cdot \omega_k $.
    \end{itemize}
}
\STATE Transpose elements $j$ and $k$ in $\pi$.
\STATE $T'(j) \leftarrow \omega_j$.
\STATE $T'(k) \leftarrow \omega_k$.
\ELSE 
\STATE Return {\tt False}.
\ENDIF
\ENDWHILE
\STATE Return {\tt True}.
\end{algorithmic}
\end{algo}

\begin{prop} \label{determine}
A tableau $T$ on $[n]$ is geometric if and only if Algorithm
\ref{bumpalgo} returns {\tt True} on $T$. 
\end{prop}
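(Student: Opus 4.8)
The plan is to read Algorithm~\ref{bumpalgo} as a left-to-right sweep of a would-be system: the list $\pi$ records the current bottom-to-top order of the paths, each iteration processes one crossing (an adjacent transposition), and stripping the letter $k$ off the front of $T'(j)$ (and $j$ off the front of $T'(k)$) records that paths $j$ and $k$ have just met. It is convenient to phrase the underlying combinatorics as a nondeterministic \emph{bump process}: a \emph{state} is a pair $(T',\pi)$, a \emph{legal move} at $(T',\pi)$ is a pair $\{j,k\}$ adjacent in $\pi$ with $T'(j)=(k)\cdot\omega_j$ and $T'(k)=(j)\cdot\omega_k$, and performing it yields $(T'',\pi')$ with $\pi'$ the transposed list and $T''(j)=\omega_j$, $T''(k)=\omega_k$. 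Every run strictly decreases $\sum_i|T'(i)|$ by $2$, so it terminates, either stuck at a nonempty state or at the empty state $(\emptyset,\dots,\emptyset)$. The algorithm is the particular run that always takes the lexicographically least legal move. I will prove: (i) if $T$ is geometric then \emph{some} run reaches the empty state; (ii) if some run reaches the empty state then $T$ is geometric; and (iii) if some run reaches the empty state then so does the greedy (algorithm) run. Throughout I assume $n\ge 3$, so that ``geometric'' is not vacuous. Combining (i)--(iii): the algorithm returns {\tt True} iff the greedy run empties $T'$ iff some run empties $T'$ iff $T$ is geometric.

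For (i), let $S$ be a system with associated tableau $T$, labeled by $[n]$. Perturb $S$ so that all crossings have pairwise distinct $x$-coordinates; this changes no local sequence, hence not $T$. Process the crossings in order of increasing $x$. When paths $a<b$ cross at $x_0$, they must be adjacent in the vertical order just to the left of $x_0$: if some path $\ell$ lay strictly between them there, a squeeze argument forces $\ell(x_0)=a(x_0)=b(x_0)$, contradicting the no-triple-point axiom. Moreover, since we process crossings in $x$-order, this is the next unprocessed crossing on both path $a$ and path $b$, so $T'(a)$ starts with $b$ and $T'(b)$ starts with $a$; thus $\{a,b\}$ is a legal move. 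As each crossing removes one letter from two rows and there are $\tfrac12\sum_i|T(i)|$ crossings, this run ends at the empty state.

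For (ii), a successful run produces a move sequence and a corresponding sequence of lists $\pi_0=(1,\dots,n),\pi_1,\dots,\pi_m$ with consecutive lists differing by an adjacent transposition. Build the standard wiring diagram on $[0,1]\times\mb{R}$: at $x=t/(m{+}1)$ place the $t$-th crossing; on each slot $[(t{-}1)/(m{+}1),\,t/(m{+}1)]$ keep every path horizontal at its rank in $\pi_{t-1}$ except the two paths that cross at the right endpoint of the slot, which move linearly between their two consecutive ranks and cross transversally at the slot's midpoint. One checks directly that this is a system: endpoints are distinct (ranks of a permutation), intersections are finite and transversal, and no point lies on three paths (horizontal pieces sit at distinct integer heights, and each designated crossing is a transversal meeting of exactly two paths at a non-integer height). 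Finally, in this system path $i$ meets the others exactly at the designated crossings involving $i$, in the order in which the run stripped letters from row $i$, i.e.\ in the order spelled by $T(i)$; hence the associated tableau is $T$ and $T$ is geometric.

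Step (iii) — confluence — is the one requiring care, though it is mild. The key observation is that two distinct legal moves at the same state are \emph{vertex-disjoint}: a move $\{j,k\}$ forces row $j$ to begin with $k$ and row $k$ to begin with $j$, so no second legal move can share $j$ or $k$. Consequently legal moves at a state commute, and performing one does not disable another (it alters only two rows, and transposing two adjacent entries of a list leaves all other entries — hence all other adjacencies among unrelated elements — in place). Now suppose a run $M=(m_1,\dots,m_s)$ from a state $\Sigma$ reaches the empty state, and let $m'$ be any legal move at $\Sigma$. If $m'=m_1$ we are done; otherwise $m'$ is disjoint from $m_1$, and we let $p$ be least with $m_p$ meeting $m'$ (such $p$ exists because $M$ empties every row, so some move must consume the front letter of a row in $m'$). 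The moves $m_1,\dots,m_{p-1}$ do not touch the two rows or the adjacency of $m'$, so $m'$ is still legal just before $m_p$; since $m_p$ shares a letter with $m'$, the ``begins-with'' constraint forces $m_p=m'$. Being disjoint from each of $m_1,\dots,m_{p-1}$, the move $m'=m_p$ can be slid to the front, so $(m',m_1,\dots,m_{p-1},m_{p+1},\dots,m_s)$ is again a run from $\Sigma$ to the empty state. Hence after any legal move from a state from which the empty state is reachable, the empty state is still reachable; since the greedy run always has a legal move available whenever it is not yet at the empty state (reachability guarantees one), and it terminates, it too reaches the empty state. This completes the equivalence, and with it the proof. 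The main obstacle I anticipate is making step (iii) airtight — precisely, verifying the disjointness/commutation claims and that sliding $m'$ to the front genuinely preserves legality at every intermediate state — together with the routine but slightly fiddly verification in step (ii) that the wiring diagram satisfies every clause of the definition of a system.
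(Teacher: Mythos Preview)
Your proof is correct, and the reverse direction (constructing a wiring diagram from a successful run) is essentially the paper's own argument. The forward direction, however, follows a genuinely different route.

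The paper works directly with the greedy (lexicographically minimal) rule via a \emph{topological sweep}: it introduces cut paths $\gamma$, shows that the algorithm's state $(\pi,T')$ at each step is exactly $(\pi^\gamma,T^\gamma)$ for some cut path, and then argues geometrically that for \emph{any} pair $\{j,k\}$ adjacent with respect to $\gamma$ (not just the pair at the leftmost remaining crossing), one can locally perturb $\gamma$ past the next $\{j,k\}$-crossing to obtain a new cut path $\delta$. Since the leftmost remaining crossing always witnesses the existence of at least one adjacent pair, the lex-minimal choice is always realizable by a sweep, and no confluence argument is needed.

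You instead show that the \emph{$x$-ordered} run succeeds (after perturbing $S$ so that crossings have distinct abscissae), and then prove a diamond-lemma-style confluence result for the underlying rewriting system: distinct legal moves at a state are vertex-disjoint, hence commute and do not disable one another, so any legal first move can be slid to the front of a successful run. This buys you a purely combinatorial argument that avoids the somewhat informal geometric claim that a cut path can be pushed across an arbitrary adjacent pair, at the cost of the extra reduction (iii) and the perturbation step in (i). Both approaches are valid; yours is closer in spirit to abstract rewriting, while the paper's stays inside the geometry of the system throughout.
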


\begin{proof}  Consider a system $S$ ordered by $[n]$ and let $T$ be
  the associated tableau. We show that applying Algorithm
  \ref{bumpalgo} to $T$ corresponds to a ``topological sweep'' of
  $S$.\footnote{This is a method from computational geometry
    introduced by Edelsbrunner and Guibas \cite{edguib}.} Define a
  \df{cut path} of $S$ to be a continuous path contained in the strip
  $[0,1] \times \mb{R}$ that   
  \begin{itemize}
  \item starts on the line $\{0\}\times \mb{R}$ below the left
    endpoints of the paths  of $S$,
  \item ends on the line $\{1\}\times \mb{R}$ above the right
    endpoints of the paths of $S$, and
  \item intersects each path of $S$ at a unique point in $(0,1) \times
    \mb{R}$. 
  \end{itemize} 
  We do not require a cut path to be the graph of a function $f \colon
  [0,1] \to \mb{R}$, but the conditions imply that it intersects every
  path in $S$ transversally. Moreover, a cut path divides
  $[0,1]\times\mb{R}$ into two connected components that respectively
  contain the left and right endpoints of $S$.  For a given cut path
  $\gamma$, we say a point is right of $\gamma$ when it is in the
  component containing the right endpoints of $S$, and we say it is
  left of $\gamma$ when it is in the other component. If we extend the
  local sequences of $S$ to include $\gamma$, then each $T(i)$ will
  include an additional entry, $\gamma$, and the local sequence of
  $\gamma$ will be a permutation of $[n]$. Let $T^{^\gamma}$ be the
  tableau on $[n]$ obtained from $T$ by deleting the prefix that ends
  in $\gamma$ from each $T(i)$, and let $\pi^\gamma$ be the local
  sequence of $\gamma$. 
  
  Suppose $T^{^\gamma} \neq (\emptyset, \dots, \emptyset)$. We claim
  that there exists a pair of integers $j,k$ such that $T^{^\gamma}(j)
  = (k,\dots)$ and $T^{^\gamma}(k) = (j,\dots)$. We call such a pair
  \df{adjacent} with respect to $\gamma$. Note that $j,k$ must be
  consecutive in $\pi^\gamma$. To see why such a pair exists, let $p$
  be the leftmost point among the crossings of $S$ that are right of
  $\gamma$, and let $j$ and $k$ be the paths crossing at the point
  $p$. This means $T^\gamma(j) \neq \emptyset$, and by
  $x$-monotonicity, if $T^\gamma(j) = (i,\dots)$ for some $i \neq k$,
  then paths $j$ and $i$ would cross at a point right of $\gamma$ and
  left of $p$, contradicting our choice of $p$.  Therefore
  $T^\gamma(j) = (k,\dots)$, and similarly $T^\gamma(k) = (j,\dots)$,
  so $j$ and $k$ are adjacent with respect to $\gamma$.

  Suppose $\{j,k\}$ are adjacent with respect to $\gamma$ with
  $T^{^\gamma}(j) = (k)\cdot \omega_j$ and $T^{^\gamma}(k)=(j)\cdot
  \omega_k$. Then, we can perturb $\gamma$ to obtain a new cut path
  $\delta$ such that $T^{^\delta}(j) = \omega_j$ and $T^{^\delta}(k) =
  \omega_k$ while $T^{^\delta}(i) = T^{^\gamma}(i)$ for all $i\notin
  \{j,k\}$. The local sequence of $\delta$ is obtained from $\pi$ by
  transposing the elements $j$ and $k$. Call such a modification of
  the cut path a \df{$(j,k)$-sweep}. 

  \begin{center}
    \begin{tikzpicture}
      \begin{scope}[scale=.8]
        \draw (0,0) coordinate (a) ..controls (1,1) and (2,2) .. (3,0);
        \draw (0,.75) ..controls (1,0) and (2,1) .. (3,1.5);
        \draw (0,1.5)  coordinate (b) ..controls (1,1) and (2,0) .. (3,.75);
        \draw[red!70!black] plot [smooth] coordinates
        { (0,-.25) (1,.5)  (.7,.8) (.7,1.2)  (1.6,1.5)  (3,1.75)}; 
        \node [red!70!black] at (0,-.5) {\ft $\gamma$};
      \end{scope}
\node[left] at (a) {\ft $j$};
\node[left] at (b) {\ft $k$};
      \begin{scope}[scale=.8, xshift = 6cm]
        \draw (0,0) coordinate (a) ..controls (1,1) and (2,2) .. (3,0);
        \draw (0,.75) ..controls (1,0) and (2,1) .. (3,1.5);
        \draw (0,1.5)  coordinate (b) ..controls (1,1) and (2,0) .. (3,.75);
        \draw[red!70!black] plot [smooth, tension = 1] coordinates
        { (0,-.25) (1,.5) (1.6,1.5)  (3,1.75)}; 
        \node [red!70!black] at (0,-.5) {\ft $\delta$};
      \end{scope}
\node[left] at (a) {\ft $j$};
\node[left] at (b) {\ft $k$};
    \end{tikzpicture}
  \end{center}

  To see why Algorithm \ref{bumpalgo} returns {\tt True} on the geometric tableau $T$ associated to $S$, start
  with a cut path $\gamma_0$ such that all crossings of $S$ are right of $\gamma_0$.  This gives us a tableau
  $T^{^{\gamma_0}} = T$ and a permutation $\pi^{\gamma_0} = (1, \dots, n)$. 
  The algorithm corresponds to a sequence of sweeps starting from $\gamma_0$ in the following way. 
  Let $(j,k)$ be the lexicographically minimal pair that is adjacent with respect
  to cut path $\gamma_t$, and perform a $(j,k)$-sweep. This produces a new
  cut path $\gamma_{t+1}$ with local sequence $\pi^{\gamma_{t+1}}$ and a new tableau
  $T^{^{\gamma_{t+1}}}$. 
  By the observations above, $T^{^{\gamma_{t+1}}}$ and $\pi^{\gamma_{t+1}}$ are obtained from $T^{^{\gamma_{t}}}$ and $\pi^{\gamma_{t}}$ according to a step of Algorithm \ref{bumpalgo}, so by induction on $t$, $\pi = \pi^{\gamma_t}$ and $T' = T^{\gamma_t}$
  at step $t$.  This procedure can be repeated until all crossings of $S$ are left of a cut path $\gamma_m$, which gives $T^{^{\gamma_m}} = (\emptyset, \dots, \emptyset)$. Thus, the algorithm returns {\tt True}. 

  \bigskip

  For the converse direction, consider a tableau $T$ on which Algorithm
  \ref{bumpalgo} returns {\tt True}.  We show that
  $T$ is geometric by constructing a system of paths
having $T$ as its associated tableau.\footnote{This is essentially the
  same as the ``wiring diagram'' construction
  introduced by Goodman \cite{goodburr} as a canonical way of drawing
  pseudoline arrangements, i.e. {1-crossing} systems.} Let
  $(\pi_0,\dots,\pi_m)$ be the sequence 
  of permutations produced by the algorithm. 
To construct the wiring diagram, start with distinct points on the line $x=0$ labeled by $[n]$ from bottom to top.  Point $i$ will be the left endpoint of path $i$.  At each step of Algorithm \ref{bumpalgo}, extend the paths to the right by crossing paths $j,k$ and letting the remaining paths continue horizontally. Notice that at step $t$ a vertical line to the immediate right of this crossing meets the paths in the order of $\pi_t$ from bottom to top.  Furthermore, path $i$ intersects the remaining paths in the order of $T(i)$ from left to right.  Hence, $T$ is the tableau associate with $S$.  To make the construction explicit, define path $i$ as the polygonal path with vertices $p_{i,1},\dots,p_{i,m}$ where $p_{i,t} = m^{-1}(t,h)$ and $i = \pi_t(h)$.
\end{proof}  

\begin{example}
Below is the state at each step of Algorithm \ref{bumpalgo} on a tableau $T$. The shaded
boxes indicate the adjacent pair at the current step.

\vspace{.2cm}

\begin{center}
  \begin{tikzpicture}
    \begin{scope}[scale = .4]
      \fill[blue, opacity = .15] (0,0)--++(1,0)--++(0,1)--++(-1,0)--cycle;
      \fill[blue, opacity = .15] (0,1)--++(1,0)--++(0,1)--++(-1,0)--cycle;
      
      \draw (0,0) --++ (4,0) --++ (0,3) --++ (-4,0) --cycle;
      \draw (1,0)--++ (0,3) (2,0)--++ (0,3) (3,0)--++(0,3);
      \draw (0,1)--++(4,0) (0,2)--++(4,0);
      \node at (.5,.5) {\ft $2$};
      \node at (1.5,.5) {\ft $3$};
      \node at (2.5,.5) {\ft $3$};
      \node at (3.5,.5) {\ft $2$};
      \node (a) at (.5,1.5) {\ft $1$};
      \node at (1.5,1.5) {\ft $1$};
      \node at (2.5,1.5) {\ft $3$};
      \node at (3.5,1.5) {\ft $3$};
      \node at (.5,2.5) {\ft $1$};
      \node at (1.5,2.5) {\ft $1$};
      \node at (2.5,2.5) {\ft $2$};
      \node at (3.5,2.5) {\ft $2$};
      \node[right] at (-.5,-1) {\ft $\pi_0 = (1,2,3)$};
    \end{scope}
      \node[left,xshift=-.2cm] at (a) {\ft $T=$};

    \begin{scope}[scale = .4, xshift = 5.5cm]
      \fill[blue, opacity = .15] (0,0)--++(1,0)--++(0,1)--++(-1,0)--cycle;
      \fill[blue, opacity = .15] (0,2)--++(1,0)--++(0,1)--++(-1,0)--cycle;

      \draw (0,0) --++ (3,0) --++ (0,2) --++ (1,0) --++(0,1)
      --++(-4,0) --cycle; 
      \draw (1,0)--++ (0,3) (2,0)--++ (0,3) (3,0)--++(0,3);
      \draw (0,1)--++(3,0) (0,2)--++(4,0);
      \node at (.5,.5) {\ft $3$};
      \node at (1.5,.5) {\ft $3$};
      \node at (2.5,.5) {\ft $2$};
      \node at (.5,1.5) {\ft $1$};
      \node at (1.5,1.5) {\ft $3$};
      \node at (2.5,1.5) {\ft $3$};
      \node at (.5,2.5) {\ft $1$};
      \node at (1.5,2.5) {\ft $1$};
      \node at (2.5,2.5) {\ft $2$};
      \node at (3.5,2.5) {\ft $2$};
      \node[right] at (-.5,-1) {\ft $\pi_1 = (2,1,3)$};
    \end{scope}

    \begin{scope}[scale = .4, xshift = 11cm]
      \fill[blue, opacity = .15] (0,0)--++(1,0)--++(0,1)--++(-1,0)--cycle;
      \fill[blue, opacity = .15] (0,2)--++(1,0)--++(0,1)--++(-1,0)--cycle;

      \draw (0,0) --++ (2,0) --++ (0,1) --++ (1,0) --++(0,2)
      --++(-3,0) --cycle; 
      \draw 
      (1,0)--++ (0,3) 
      (2,0)--++ (0,3) 
      (3,1)--++(0,2);
      \draw 
      (0,1)--++(3,0) 
      (0,2)--++(3,0);
      \node at (.5,.5) {\ft $3$};
      \node at (1.5,.5) {\ft $2$};
      \node at (.5,1.5) {\ft $1$};
      \node at (1.5,1.5) {\ft $3$};
      \node at (2.5,1.5) {\ft $3$};
      \node at (.5,2.5) {\ft $1$};
      \node at (1.5,2.5) {\ft $2$};
      \node at (2.5,2.5) {\ft $2$};
      \node[right] at (-.5,-1) {\ft $\pi_2 = (2,3,1)$};
    \end{scope}

    \begin{scope}[scale = .4, xshift = 16.5cm]
      \fill[blue, opacity = .15] (0,0)--++(1,0)--++(0,1)--++(-1,0)--cycle;
      \fill[blue, opacity = .15] (0,1)--++(1,0)--++(0,1)--++(-1,0)--cycle;

      \draw (0,0) --++ (1,0) --++ (0,1) --++ (2,0) --++(0,1)
      --++(-1,0) --++(0,1) --++(-2,0) --cycle;
      \draw 
      (1,0)--++ (0,3) 
      (2,1)--++ (0,1) 
      (3,1)--++(0,1);
      \draw 
      (0,1)--++(3,0) 
      (0,2)--++(3,0);
      \node at (.5,.5) {\ft $2$};
      \node at (.5,1.5) {\ft $1$};
      \node at (1.5,1.5) {\ft $3$};
      \node at (2.5,1.5) {\ft $3$};
      \node at (.5,2.5) {\ft $2$};
      \node at (1.5,2.5) {\ft $2$};
      \node[right] at (-.5,-1) {\ft $\pi_3 = (2,1,3)$};
    \end{scope}

    \begin{scope}[scale = .4, xshift = 22cm]
      \fill[blue, opacity = .15] (0,1)--++(1,0)--++(0,1)--++(-1,0)--cycle;
      \fill[blue, opacity = .15] (0,2)--++(1,0)--++(0,1)--++(-1,0)--cycle;
 
     \draw (0,0) --++ (0,1) --++ (2,0) --++(0,2)
       --++(-2,0) --cycle;
      \draw 
      (1,1)--++ (0,2);
      \draw 
       (0,2)--++(2,0);
      \node at (.5,1.5) {\ft $3$};
      \node at (1.5,1.5) {\ft $3$};
      \node at (.5,2.5) {\ft $2$};
      \node at (1.5,2.5) {\ft $2$};
      \node[right] at (-.5,-1) {\ft $\pi_4 = (1,2,3)$};
    \end{scope}

    \begin{scope}[scale = .4, xshift = 27.5cm]
      \fill[blue, opacity = .15] (0,1)--++(1,0)--++(0,1)--++(-1,0)--cycle;
      \fill[blue, opacity = .15] (0,2)--++(1,0)--++(0,1)--++(-1,0)--cycle;

      \draw (0,0) --++ (0,1) --++ (1,0) --++(0,2)
       --++(-1,0) --cycle;
      \draw 
       (0,2)--++(1,0);
      \node at (.5,1.5) {\ft $3$};
      \node at (.5,2.5) {\ft $2$};
      \node[right] at (-.5,-1) {\ft $\pi_5 = (1,3,2)$};

    \end{scope}

    \begin{scope}[scale = .4, xshift = 33cm]
      \draw (0,0) --++ (0,3);
      \node[right] at (-.5,-1) {\ft $\pi_6 = (1,2,3)$};
    \end{scope}
  \end{tikzpicture}
\end{center}

Since the final tableau is $(\emptyset, \emptyset, \emptyset)$, it 
follows from Proposition \ref{determine} that $T$ is
geometric. Plotting the sequence of permutations $(\pi_0, \dots, \pi_6)$ produces the following system. 

\vspace{.2cm}

\begin{center}
  \begin{tikzpicture}
    \begin{scope}[scale = .6]
      \draw[-latex, gray!50!white](0,-.6) --++ (0,3.5);
      \draw[-latex, gray!50!white](1,-.6) --++ (0,3.5);
      \draw[-latex, gray!50!white](2,-.6) --++ (0,3.5);
      \draw[-latex, gray!50!white](3,-.6) --++ (0,3.5);
      \draw[-latex, gray!50!white](4,-.6) --++ (0,3.5);
      \draw[-latex, gray!50!white](5,-.6) --++ (0,3.5);
      \draw[-latex, gray!50!white](6,-.6) --++ (0,3.5);
      \draw[thick](0,2)\ls\ld\lu\ls\ld\lu;
      \draw[thick](0,1)\ld\ls\ls\lu\lu\ld;
      \draw[thick](0,0)\lu\lu\ld\ld\ls\ls;
      \node[left] at (0,0) {\ft $1$};
      \node[left] at (0,1) {\ft $2$};
      \node[left] at (0,2) {\ft $3$};
      \node at (0,3.3) {\ft $\pi_0$};
      \node at (1,3.3) {\ft $\pi_1$};
      \node at (2,3.3) {\ft $\pi_2$};
      \node at (3,3.3) {\ft $\pi_3$};
      \node at (4,3.3) {\ft $\pi_4$};
      \node at (5,3.3) {\ft $\pi_5$};
      \node at (6,3.3) {\ft $\pi_6$};
    \end{scope}
  \end{tikzpicture}
\end{center}
\end{example}

\subsection{Valid matchings}
We present a similar algorithm which will be
conceptually useful. It basically does the same as Algorithm
\ref{bumpalgo}, except that it disregards the sequence of permutations. 

\begin{algo}\label{algomatch}
The input is a tableau $T$ on $[n]$. The output is {\tt True} or {\tt
  False}. 

\begin{algorithmic}
\STATE $T' \leftarrow T$
\WHILE{ $T' \neq (\emptyset,\dots,\emptyset)$ }
\IF{
There exists a lexicographically minimal pair of integers $(j,k)$ that satisfy the following.
    \begin{itemize}
    \item \hspace{-12pt} $T'(j) = (k) \cdot \omega_j $.
    \item \hspace{-12pt} $T'(k) = (j) \cdot \omega_k $.
    \end{itemize}
}
\STATE $T'(j) \leftarrow \omega_j$.
\STATE $T'(k) \leftarrow \omega_k$.
\ELSE 
\STATE Return {\tt False}.
\ENDIF
\ENDWHILE
\STATE Return {\tt True}.
\end{algorithmic}
\end{algo}

\begin{define}
Given $T \in [n]^\tabox$, we call a pair of integers $\{j,k\}$ \df{weakly adjacent} in $T$
if $T(j) = (k,\dots)$ and $T(k)= (j,\dots)$. 
We say that $T$ has a
\df{valid matching} when the output
of Algorithm \ref{algomatch} on $T$ is {\tt True}.
\end{define}

\begin{prop} \label{val-match}
  For $T \in [n]^\tabox$ with $n\geq 3$, $T$ is geometric if and 
  only if $T$ has a valid matching and the restriction $T_{_X}$ is geometric for every 3-subset
  $X\subset [n]$. 
\end{prop}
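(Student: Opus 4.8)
The plan is to pivot between the greedy reduction of Algorithm~\ref{algomatch} and the topological sweep from the proof of Proposition~\ref{determine}, using two tools. The \emph{geometric} tool is an observation already implicit in the proof of Proposition~\ref{determine}, which I will record for an arbitrary cut path and an arbitrary weakly adjacent pair: if $\gamma$ is a cut path of a system and $\{j,k\}$ is weakly adjacent in $T^\gamma$, then $j$ and $k$ are consecutive in $\pi^\gamma$. Indeed, if some $\ell$ were strictly between $j$ and $k$ along $\gamma$, then by $x$-monotonicity $\ell$ would have to cross $j$ or $k$ strictly before the first $j$--$k$ crossing lying right of $\gamma$, contradicting that $T^\gamma(j)$ begins with $k$ and $T^\gamma(k)$ begins with $j$. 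The \emph{combinatorial} tool is confluence: the rewriting relation ``delete the leading letters of a weakly adjacent pair'' is terminating (lengths strictly decrease) and locally confluent (since the first letter of a row determines its partner in any weakly adjacent pair, distinct weakly adjacent pairs are disjoint, hence commute), so by Newman's lemma it is confluent. Consequently $T$ has a valid matching iff its unique normal form is $(\emptyset,\dots,\emptyset)$, iff some reduction sequence empties $T$; in particular having a valid matching is inherited by every tableau reachable from $T$ by reductions.

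For the forward direction, suppose $T$ is geometric, associated to a system $S$ on $[n]$. For any $3$-subset $X$ the subsystem $S_X$ realizes $T_X$ (its local sequences are the restrictions of those of $S$), so $T_X$ is geometric. By the proof of Proposition~\ref{determine}, running Algorithm~\ref{bumpalgo} on $T$ terminates at $(\emptyset,\dots,\emptyset)$, and each step deletes the leading letters of a weakly adjacent pair; this is therefore a reduction sequence emptying $T$, so by confluence $T$ has a valid matching.

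For the converse, assume $T$ has a valid matching and all its $3$-restrictions are geometric; I will show Algorithm~\ref{bumpalgo} returns {\tt True} on $T$, which gives geometricity via Proposition~\ref{determine}. Run the algorithm; each step is a legal reduction, so every intermediate tableau $T'$ is reachable from $T$ and hence still has a valid matching. Suppose for contradiction the algorithm halts at $(T^\star,\pi^\star)$ with $T^\star\neq(\emptyset,\dots,\emptyset)$. Then $T^\star$ is not a normal form (its normal form is empty), so it has a weakly adjacent pair $\{j,k\}$; since the algorithm halted, $\{j,k\}$ is not $\pi^\star$-adjacent, so some $\ell$ lies strictly between $j$ and $k$ in $\pi^\star$. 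Put $X=\{j,k,\ell\}$ and let $S_X$ be the system realizing the geometric tableau $T_X$. The key step is to read the run of Algorithm~\ref{bumpalgo} on $T$ off on the coordinates $X$: a reduction of a pair contained in $X$ restricts to a valid $(j',k')$-sweep of the current cut path of $S_X$ (the pair is $\pi$-adjacent, hence $\pi_X$-adjacent, and leads both rows, hence also after restriction), while a reduction of a pair meeting $X$ in at most one element leaves $(T'_X,\pi'_X)$ unchanged (the removed leading letter is outside $X$, and transposing a $\pi$-adjacent pair that meets $X$ in $\le 1$ element does not change the relative order of the elements of $X$). Starting from $(T_X,\mathrm{id})=(T^{\gamma_0},\pi^{\gamma_0})$ we thus reach $(T^\star_X,\pi^\star_X)=(T^\gamma,\pi^\gamma)$ for some cut path $\gamma$ of $S_X$. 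But $\{j,k\}$ is still weakly adjacent in $T^\star_X$, so by the observation above $j$ and $k$ are consecutive in $\pi^\gamma=\pi^\star_X$, contradicting that $\ell$ lies strictly between them. Hence the algorithm cannot halt prematurely and returns {\tt True}.

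The main obstacle is this last key step: confirming that the projection of the Algorithm~\ref{bumpalgo} run to a $3$-subset is a genuine sequence of topological sweeps of the corresponding $3$-element subsystem. This requires checking that $\pi$-adjacency and the ``leads both rows'' condition survive restriction, and that the reductions invisible to $X$ really fix the restricted cut-path data (which is where one uses that the transposed pair is $\pi$-adjacent). Everything else is either elementary rewriting or a direct application of the topological-sweep machinery set up for Proposition~\ref{determine}; note in particular that the base case ``deleting the leading letters of a weakly adjacent pair keeps a geometric $3$-tableau geometric'' is \emph{not} needed with this argument, since the validity of the projected sweeps follows from geometricity of the starting tableau $T_X$ alone.
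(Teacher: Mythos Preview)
Your argument is correct and follows the same core strategy as the paper: at the moment Algorithm~\ref{bumpalgo} cannot proceed, locate a weakly adjacent pair $\{j,k\}$ that fails to be $\pi$-adjacent, pick $\ell$ between them, restrict to $X=\{j,k,\ell\}$, and derive a contradiction with the geometricity of $T_X$. The paper runs Algorithms~\ref{bumpalgo} and~\ref{algomatch} in parallel until they first diverge, then shows that Algorithm~\ref{bumpalgo} applied to $U=T_X$ must itself return {\tt False} (using that a tableau on three elements has at most one weakly adjacent pair, so the projected run \emph{is} the run on $U$). You instead invoke confluence of the reduction system via Newman's lemma --- which lets you work only with the run of Algorithm~\ref{bumpalgo} and still know a weakly adjacent pair exists at its failure point --- and phrase the contradiction through cut paths of the realizing system $S_X$ and your ``geometric observation''. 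These are equivalent packagings of the same idea; the confluence step has the small bonus of making the forward direction, which the paper simply calls ``trivial'', fully transparent.
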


\begin{proof} 
One direction is trivial. For the other direction, let $T$ have a
valid matching, let $T_{_X}$ be geometric for all 3-subset $X\subset
[n]$, and suppose $T$ is not geometric. By Proposition
\ref{determine}, Algorithm \ref{bumpalgo} returns {\tt False}, which
means Algorithms \ref{bumpalgo} and \ref{algomatch} diverge. Let
$Y_1,\dots,Y_p$ be the sequence of weakly adjacent pairs at each step
of Algorithm \ref{algomatch}, let $\pi_0,\pi_1,\dots,\pi_q$ be the
sequence of permutations at each step of Algorithm \ref{bumpalgo}
prior to divergence, and let $Y_{q+1}=\{j,k\}$.  For divergence to
occur, $\{j,k\}$ must not be adjacent.  That is, there must be some
integer $i$ appearing between $j$ and $k$ in $\pi_q$.  Consider $U =
T_{\{i,j,k\}}$.    

The subsequence $Y_{t_1},\dots,Y_{t_m},Y_{q+1},\dots$ consisting of all pairs $Y_{t_s} \subset \{i,j,k\}$ defines a valid matching on $U$, and the order of $\{i,j,k\}$ in the permutation $\pi_{t_s}$ is determined by sequentially transposing pairs $Y_{t_1},\dots,Y_{t_s}$. 
For a tableau on three elements, there can be at most one weakly adjacent pair of integers.  Therefore, the sequence of adjacent integers produced by Algorithm \ref{bumpalgo} on $U$ is exactly $Y_{t_1},\dots,Y_{t_m}$ and the sequence of permutations is exactly the order of $\{i,j,k\}$ in $\pi_{t_1},\dots,\pi_{t_m}$, after which there is no adjacent pair and the algorithm returns {\tt False}.  But by Proposition \ref{determine}, Algorithm \ref{bumpalgo} returns {\tt True} on $U$, since $U$ is geometric.  Hence by contradiction, $T$ must be geometric.  
\end{proof}

\subsection{Regular tableaux and regular 
   systems} \label{signs-reg} 

Here we make some observations concerning regular systems and their
associated tableaux. Proposition \ref{factorize} concerns the
concatenation of regular geometric tableaux and is important in the
remaining sections.

\begin{lemma} \label{signa-local}
  Let $S$ be a system labeled by $[3]$ with signature
  $\sigma$ and associated tableau $T$. Then \[T(1) \sim
  \sigma_{_{\{x,y\}}} \; , \; T(2) \sim
  \sigma_{_{\{x,z\}}} \; , T(3) \sim
  \sigma_{_{\{y,z\}}}.\]  
\end{lemma}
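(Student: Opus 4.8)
The plan is to unwind the two definitions directly; this lemma is simply the formal restatement of Remark~\ref{sign=local}. Recall that the signature $\sigma = \sigma(S)$ records, from left to right, the linear order of all crossings of $S$, with a $\{1,2\}$-crossing contributing the letter $x$, a $\{1,3\}$-crossing the letter $y$, and a $\{2,3\}$-crossing the letter $z$, while the row $T(i)$ of the associated tableau is by definition the local sequence of path $i$, i.e.\ the sequence on $[3]\setminus\{i\}$ recording, from left to right, which path it crosses at each of its crossings. First I would observe that the crossings of $S$ involving path $1$ are precisely the $\{1,2\}$-crossings and the $\{1,3\}$-crossings, so they occupy exactly the positions of $\sigma$ carrying the letters $x$ and $y$, and hence occur in the same left-to-right order as those letters; moreover at a $\{1,2\}$-crossing path $1$ meets path $2$ and at a $\{1,3\}$-crossing it meets path $3$. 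Consequently deleting every $z$ from $\sigma$ and relabelling $x\mapsto 2$, $y\mapsto 3$ produces exactly $T(1)$. Since this relabelling is the order isomorphism between the ordered alphabets $\{x\prec y\}$ and $\{2<3\}$, we get $N(T(1)) = N(\sigma_{_{\{x,y\}}})$, that is, $T(1)\sim\sigma_{_{\{x,y\}}}$.

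The remaining two identities follow the same pattern. The crossings involving path $2$ are the $\{1,2\}$-crossings (letter $x$) and the $\{2,3\}$-crossings (letter $z$); at the former path $2$ meets path $1$ and at the latter it meets path $3$, so deleting every $y$ from $\sigma$ and relabelling $x\mapsto 1$, $z\mapsto 3$ yields $T(2)$, whence $T(2)\sim\sigma_{_{\{x,z\}}}$ since $x\mapsto 1$, $z\mapsto 3$ is again order preserving. Similarly the crossings involving path $3$ are the $\{1,3\}$-crossings (letter $y$) and the $\{2,3\}$-crossings (letter $z$), with path $3$ meeting path $1$ at the former and path $2$ at the latter, so relabelling $y\mapsto 1$, $z\mapsto 2$ in $\sigma_{_{\{y,z\}}}$ produces $T(3)$, giving $T(3)\sim\sigma_{_{\{y,z\}}}$.

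I do not expect a real obstacle here; the only thing to watch is the bookkeeping, namely checking in each case that the two letters surviving in the restriction of $\sigma$ correspond, in the fixed order $x\prec y\prec z$, to the two surviving path-labels in their natural numerical order. This is immediate from the lexicographic convention $\{i_1,i_2\}\mapsto x$, $\{i_1,i_3\}\mapsto y$, $\{i_2,i_3\}\mapsto z$ recorded in the remark following Definition~\ref{signatures}. Equivalently, one can phrase the whole argument in one line: for each path $i$ the map ``crossing on path $i$ $\mapsto$ the other path it lies on'' factors through the map ``crossing $\mapsto$ its signature letter'', and the induced map on the two relevant letters is an order isomorphism of two-element ordered alphabets, so the two encodings are order equivalent.
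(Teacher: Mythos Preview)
Your proof is correct and is exactly what the paper does: its proof consists of the single sentence ``This is just a reformulation of Remark~\ref{sign=local},'' and your argument is precisely the unpacking of that remark.
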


\begin{proof} This is just a reformulation of Remark \ref{sign=local}.
\end{proof}

\begin{lemma} \label{system-tableau}
  Let $S$ be a $k$-crossing system and $T$ its associated
  tableau. Then $S$ is regular if and only if $T$ is regular. 
\end{lemma}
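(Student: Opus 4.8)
The plan is to reduce everything to the size-$3$ subsystems, since both "regular system'' and "regular tableau'' are defined by demanding that all size-$3$ restrictions coincide. Write $n=|S|$; then the hypothesis "$|S|\ge 4$'' in the definition of a regular system is literally the same as "$A=[n]$ has at least $4$ elements'' in the definition of a regular tableau, so it suffices to compare the restriction conditions (and in particular the case $n=3$ is trivial, with both notions false). For a $3$-subset $X=\{i_1<i_2<i_3\}\subset[n]$, let $S_{_X}$ be the subsystem of $S$ on the paths labeled by $X$. First I would record the routine fact that $T_{_X}$ is exactly the tableau associated to $S_{_X}$: restricting a local sequence of $S$ to the sub-alphabet $X$ simply deletes the crossings with paths outside $X$ while preserving the order of the crossings among $i_1,i_2,i_3$. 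Consequently $N(T_{_X})$ is the tableau associated to $S_{_X}$ after relabeling its three paths by $[3]$, and by Lemma~\ref{signa-local} applied to that relabeled subsystem, the three rows of $N(T_{_X})$ are, up to the evident relabeling of letters, the pairwise restrictions $\sigma(S_{_X})_{_{\{x,y\}}}$, $\sigma(S_{_X})_{_{\{x,z\}}}$, $\sigma(S_{_X})_{_{\{y,z\}}}$ of the signature $\sigma(S_{_X})$.

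The one real ingredient I would isolate is a small combinatorial lemma: a word $\sigma$ over the alphabet $\{x,y,z\}$ is uniquely determined by its three two-letter restrictions. I would prove this by a first-difference argument. If $\sigma\ne\tau$ have equal two-letter restrictions, let $p$ be the first position where they differ, with $\sigma$ carrying the letter $a$ and $\tau$ the letter $b$, $a\ne b$. Since $\sigma$ and $\tau$ agree strictly before position $p$, the number of occurrences of $a$ plus the number of occurrences of $b$ in that common prefix is some integer $m$; hence the letter at position $p$ is the $(m+1)$-st letter of the restriction to $\{a,b\}$ in both words, but this letter is $a$ in $\sigma_{_{\{a,b\}}}$ and $b$ in $\tau_{_{\{a,b\}}}$, contradicting $\sigma_{_{\{a,b\}}}=\tau_{_{\{a,b\}}}$. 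Combining this with the previous paragraph, for all $3$-subsets $X,Y$ one obtains the chain of equivalences
\[ T_{_X}\sim T_{_Y} \ \Longleftrightarrow\ N(T_{_X})=N(T_{_Y}) \ \Longleftrightarrow\ \sigma(S_{_X})=\sigma(S_{_Y}). \]

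Now the lemma falls out. The tableau $T$ is regular iff $n\ge 4$ and $T_{_X}\sim T_{_Y}$ for all $3$-subsets $X,Y$, which by the displayed equivalence holds iff $n\ge 4$ and $\sigma(S_{_X})=\sigma(S_{_Y})$ for all such $X,Y$; this is the definition of $S$ being regular, provided I also account for the clause $\sigma(S_{_X})\ne\emptyset$. That clause is automatic here: since $S$ is $k$-crossing, every pair of paths crosses at least once, so every size-$3$ subsystem has at least three crossings and a nonempty signature. Hence $T$ is regular if and only if $S$ is regular.

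I do not anticipate a genuine obstacle. The only thing that requires care is the dictionary between the letters $x,y,z$ used for signatures and the integer labels used for tableaux, mediated by the normalization map and by Lemma~\ref{signa-local}; once that translation is set up, the word-reconstruction lemma together with the matching of the size and nonemptiness conditions completes the proof.
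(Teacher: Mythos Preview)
Your proof is correct and follows essentially the same route as the paper: both reduce to Lemma~\ref{signa-local} to translate between $T_{_X}\sim T_{_Y}$ and $\sigma(S_{_X})=\sigma(S_{_Y})$. You are more careful than the paper in one respect: the paper's proof is a single sentence invoking Lemma~\ref{signa-local}, whereas you explicitly supply the word-reconstruction lemma (a word on $\{x,y,z\}$ is determined by its three two-letter restrictions), which is exactly what is needed to make the backward implication from equal tableaux to equal signatures rigorous.
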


\begin{proof} Suppose $S$ is labeled by $[n]$. $T$ is regular if and
  only if for every 3-subset $X\subset [n]$ we have $T_{_X} \sim
  T_{_{[3]}}$. The claim therefore follows from Lemma \ref{signa-local}.
\end{proof}

Given a word $\alpha$ on the alphabet $\{x,y,z\}$, let $X(\alpha)$,
$Y(\alpha)$, and $Z(\alpha)$ denote the number of $x$'s, $y$'s and
$z$'s in $\alpha$, respectively.

\begin{prop} \label{factorize}
  Let $S$ be a regular system of size $n$ with signature $\sigma$ and
  associated tableaux $T^\omega_n$. If $\sigma = \sigma_1\cdot
  \sigma_2$ with with $X(\sigma_j) = Y(\sigma_j) =
  Z(\sigma_j) > 0$, then there exists regular systems $S_1$ and $S_2$
  of size $n$ with 
  signatures $\sigma_1$ and $\sigma_2$.
 Moreover, if $T_n^{\omega_1}$
  and $T_n^{\omega_2}$ are the tableaux associated to $S_1$ and
  $S_2$, then $T_n^\omega = T_n^{\omega_1\cdot\omega_2}$. 
\end{prop}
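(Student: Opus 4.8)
The plan is to realize $S_1$ and $S_2$ as the two pieces into which $S$ is split by a single cut path. Set $\ell_j=X(\sigma_j)=Y(\sigma_j)=Z(\sigma_j)$, so that each pair of paths of $S$ crosses $\ell=\ell_1+\ell_2$ times, and write $T=T_n^\omega$ for the associated tableau of $S$, which is regular and geometric. Let $D$ be the set of crossings of $S$ consisting of the first $\ell_1$ crossings of every pair of paths. The first step is to show that $D$ is a down-set for the left-to-right partial order on the crossings of $S$: if a crossing $p'$ of a pair $\{i,j\}$ lies in $D$ and a crossing $p$ of $i$ with some $k$ comes before $p'$ along path $i$, then in the three-path subsystem on $\{i,j,k\}$, whose signature is $\sigma_1\cdot\sigma_2$, the crossing $p'$ lies in the $\sigma_1$-block, hence so does the earlier crossing $p$, so $p\in D$. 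As in the proof of Proposition~\ref{determine}, any down-set of crossings is swept out by a sequence of $(j,k)$-sweeps --- at each stage one may sweep any not-yet-swept crossing of $D$ that is minimal among the remaining crossings, and such a crossing exists precisely because $D$ is a down-set --- and this yields a cut path $\gamma$ of $S$ whose left side contains exactly the crossings in $D$.

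Let $S_1$ be the part of $S$ lying to the left of $\gamma$, carrying the labels inherited from $S$. For every $X=\{i_1<i_2<i_3\}\subset[n]$, the crossings of $S_1|_X$ are the first $\ell_1$ crossings of each pair inside $X$, that is, exactly the crossings in the $\sigma_1$-block of $S|_X$ and in the same order, so $\sigma(S_1|_X)=\sigma_1$; hence $S_1$ is a regular system of size $n$ with signature $\sigma_1$, and by Lemma~\ref{system-tableau} and Corollary~\ref{tab-corr1} its associated tableau is some $T_n^{\omega_1}$. Let $R$ be the tableau on $[n]$ whose $i$-th row lists, in order, the crossings of path $i$ of $S$ that lie to the right of $\gamma$; applying Remark~\ref{sign=local} to the $\sigma_2$-block of each $S|_X$, every $3$-restriction of $R$ is order equivalent to the size-$3$ tableau encoding $\sigma_2$, so $R$ is a regular tableau, say $R=T_n^{\omega_2}$. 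Since $T(i)=T_n^{\omega_1}(i)\cdot R(i)$ for each $i$, Corollary~\ref{tabconc} gives $T_n^\omega = T_n^{\omega_1}\cdot T_n^{\omega_2}=T_n^{\omega_1\cdot\omega_2}$, and the injectivity in Corollary~\ref{tab-corr1} yields $\omega=\omega_1\cdot\omega_2$; this is the ``moreover'' clause, provided $T_n^{\omega_2}$ is indeed the tableau associated to a regular system $S_2$ with signature $\sigma_2$.

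Producing such an $S_2$ --- with signature exactly $\sigma_2$ rather than some permuted version --- requires controlling the vertical order of the paths along $\gamma$, and this is where the balance hypothesis on $\sigma_1$ enters. Reading the crossings of a three-path system as a walk on the $6$-cycle of vertical orderings, with the cyclic label pattern $x,y,z,x,y,z$, and collapsing that cycle by its label-preserving order-two automorphism to a triangle with edge labels $x,y,z$, one checks by a short parity argument --- comparing the net traversals of the three edges --- that a walk using each edge exactly $\ell_1$ times must return to its starting vertex. Hence after the $\sigma_1$-block the three paths of any $S|_X$ are either in their original vertical order or in the exactly reversed order, and which case occurs is the same for all $X$ because it is determined by $\sigma_1$; so the vertical order of all paths of $S$ along $\gamma$ is either $(1,\dots,n)$ or $(n,\dots,1)$. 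In the former case the part of $S$ to the right of $\gamma$, relabelled by vertical order, already has signature $\sigma_2$; in the latter case it has signature $\overline{\sigma_2}$, the word obtained from $\sigma_2$ by interchanging $x$ and $z$, and reflecting this system vertically produces a system with signature $\overline{\overline{\sigma_2}}=\sigma_2$. Either way we obtain a regular system $S_2$ of size $n$ with signature $\sigma_2$; being regular with that signature, its associated tableau must equal $T_n^{\omega_2}=R$, which finishes the proof.

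The two points I expect to demand the most care are the verification that $D$ is a down-set --- which is what makes the cut path $\gamma$ available --- and the triangle-walk parity argument: the latter forces the vertical order along $\gamma$ to be either trivial or a complete reversal, and it is exactly this that makes ``signature $\sigma_2$'', rather than some other rearrangement of $\sigma_2$, come out right.
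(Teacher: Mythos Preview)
Your proof is correct and follows essentially the same route as the paper's: split $S$ after the first $\ell_1$ crossings of each pair, obtain $S_1$ as the left piece, and obtain $S_2$ from the right piece with a vertical reflection when $\ell_1$ is odd. Two remarks on presentation. First, your down-set/sweep argument makes rigorous what the paper treats more tersely (it simply cuts each path between its $\ell_1(n{-}1)$'th and $(\ell_1(n{-}1){+}1)$'st crossings and invokes ``suitable homeomorphisms''); your version is the cleaner justification. Second, your $6$-cycle/triangle walk argument for the order along $\gamma$ is correct (it is an Euler-type parity count: each vertex of the triangle has degree $2\ell_1$, so a walk starting at a vertex must end there), but it is more machinery than needed. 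The paper just observes that each pair has crossed exactly $\ell_1$ times, so every pair's relative order is preserved if $\ell_1$ is even and swapped if $\ell_1$ is odd, forcing the permutation along $\gamma$ to be either the identity or the full reversal.
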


\begin{proof} Set $T\defeq T_n^\omega$. By assumption there are
  positive integers $k_1$ and $k_2$, 
  such that $k_j \defeq X(\sigma_j) = Y(\sigma_j) = Z(\sigma_j)$.
For every $i\in[n]$, write $T(i) = T_1(i) \cdot T_2(i)$, where
$T_1(i)$ consists of the initial $k_1(n-1)$
entries of $T(i)$, and $T_2(i)$ consists of the final $k_2(n-1)$ entries
of $T(i)$. The sequences $T_1(i)$ and $T_2(i)$ form two tableaux, $T_1
= (T_1(1), \dots, T_1(n))$ and $T_2 = (T_2(1), \dots, T_2(n))$, which
satisfy $T = T_1 \cdot T_2$. We first show that $T_1$ and $T_2$ are
regular. 

Let $A = \{i_1,i_2,i_3\}
  \subset [n]$ be an arbitrary subset with
  $i_1<i_2<i_3$, and
  consider the restriction $T_{_A}$. Since $T$ is regular, Lemma
  \ref{signa-local} implies that $T(i_1)_{_A} 
  \sim \sigma_{_{\{x,y\}}}$, $T(i_2)_{_A} \sim \sigma_{_{\{x,z\}}}$, and
  $T(i_3)_{_A} \sim \sigma_{_{\{y,z\}}}$. It follows that
\[\begin{array}{rcl}
T_1(i_1)_{_A} \cdot T_{2}(i_1)_{_A} & \sim &  (\sigma_1)_{_{\{x,y\}}}
\cdot (\sigma_2)_{_{\{x,y\}}} \\ 
T_1(i_2)_{_A} \cdot T_{2}(i_1)_{_A} & \sim &  (\sigma_1)_{_{\{x,z\}}}
\cdot (\sigma_2)_{_{\{x,z\}}} \\ 
T_1(i_3)_{_A} \cdot T_{2}(i_1)_{_A} & \sim &  (\sigma_1)_{_{\{y,z\}}}
\cdot (\sigma_2)_{_{\{y,z\}}} ,
\end{array}\]
and therefore,
\[\begin{array}{rclcrcl}
T_1(i_1)_{_A} & \sim &  (\sigma_1)_{_{\{x,y\}}} & \hspace{.3cm},\hspace{.3cm} &
T_2(i_1)_{_A} & \sim &  (\sigma_2)_{_{\{x,y\}}} \\
T_1(i_2)_{_A} & \sim &  (\sigma_1)_{_{\{x,z\}}} & \hspace{.3cm},\hspace{.3cm} &
T_2(i_2)_{_A} & \sim &  (\sigma_2)_{_{\{x,z\}}} \\
T_1(i_3)_{_A} & \sim &  (\sigma_1)_{_{\{y,z\}}} & \hspace{.3cm},\hspace{.3cm} &
T_2(i_3)_{_A} & \sim &  (\sigma_2)_{_{\{y,z\}}}.
\end{array}\]

This proves that $T_1$ and $T_2$ are regular, and by Proposition
\ref{tab-corr} there exists words $\omega_1$ and $\omega_2$ in
$W_{_{(ab,cd)}}$ such that $T_1 = T^{\omega_1}_n$ and $T_2 =
T^{\omega_2}_n$. By Corollary \ref{tabconc}, $T = T_n^{\omega_1\cdot\omega_2}$.

We now show that $T_1$ and $T_2$ are geometric. Each path of $S$ is
involved in $(k_1+k_2)(n-1)$ crossings. Cut each 
path of $S$ at a point strictly between the $k_1(n-1)$'th and the
$k_1(n-1)+1$'st crossings. After suitable
homeomorphisms of the plane, we may view this as two distinct systems of
paths, $S_1$ and $S_2$, where the crossings of $S_1$ are precisely the
initial $k_1(n-1)$ crossings of each path of $S$, and the crossings of
$S_2$ are precisely the final $k_2(n-1)$ crossings of each path of
$S$. It is easily seen that $S_1$ and $S_2$ are regular of size
$n$. Clearly $S_1$ 
has signature $\sigma_1$ and associated tableau $T_1$. Furthermore, if
$k_1$ is even, then $S_2$ has signature $\sigma_2$ and associated
tableau $T_2$. On the other hand, if $k_1$ is odd, then the right
endpoints of the paths in $S_1$ appear in reverse order. Thus, if we
consider a vertical reflection of $S_2$ we obtain a regular system of
size $n$ which has signature $\sigma_2$ and associated tableau
$T_2$.
\end{proof}

\begin{example} \label{ex:concat}
Consider the following system $S$ of size $4$.

\begin{center}
\begin{tikzpicture}[scale=.3]
\begin{scope}[xscale=.93]
\draw[blue!60!black!30!cyan](0,4)
\ls\ls\ds\ld\ld\du\ud\de\us\ud\de\ls\ls\us\ud\de\ls\ls; 
\draw[blue!70!black!50!cyan](0,3)
\ls\ds\ld\de\us\ud\du\lu\ue\ls\ds\de\us\ue\ls\ds\de\ls;
\draw[blue!80!black!70!cyan](0,2)
\ds\de\us\lu\ue\ls\ls\ds\ld\du\lu\lu\ue\ls\ls\ls\ds\de;
\draw[blue!90!black!90!cyan](0,1)
\us\lu\lu\ue\ls\ls\ls\ls\ls\ls\ls\ds\ld\ld\du\lu\lu\ue;
\end{scope}
\end{tikzpicture}
\end{center}

By inspection we see that $S$ is regular with signature $\sigma =
xyz^3xy^2x$. The associated tableau is $T^\omega_4$ where $\omega =
(ab)(cd)(cd)(dc)(a^2b^2)$.

 \begin{center}
  \begin{tikzpicture}
    \begin{scope}[scale = .4]
      \draw (0,0) --++ (9,0) --++ (0,4) --++ (-9,0) --cycle;
      
      \draw 
      (1,0) --++ (0,4) 
      (2,0) --++ (0,4) 
      (3,0) --++ (0,4) 
      (4,0) --++ (0,4) 
      (5,0) --++ (0,4) 
      (6,0) --++ (0,4) 
      (7,0) --++ (0,4) 
      (8,0) --++ (0,4) ;
      
      \draw 
      (0,1) --++ (9,0) 
      (0,2) --++ (9,0) 
      (0,3) --++ (9,0);

      \node at (.5,.5) {\ft $2$};
      \node at (1.5,.5) {\ft $3$};
      \node at (2.5,.5) {\ft $4$};
      \node at (3.5,.5) {\ft $2$};
      \node at (4.5,.5) {\ft $3$};
      \node at (5.5,.5) {\ft $4$};
      \node at (6.5,.5) {\ft $4$};
      \node at (7.5,.5) {\ft $3$};
      \node at (8.5,.5) {\ft $2$};
     
      \node at (.5,1.5) {\ft $1$};
      \node at (1.5,1.5) {\ft $3$};
      \node at (2.5,1.5) {\ft $4$};
      \node at (3.5,1.5) {\ft $3$};
      \node at (4.5,1.5) {\ft $4$};
      \node at (5.5,1.5) {\ft $4$};
      \node at (6.5,1.5) {\ft $3$};
      \node at (7.5,1.5) {\ft $1$};
      \node at (8.5,1.5) {\ft $1$};
           
      \node at (.5,2.5) {\ft $1$};
      \node at (1.5,2.5) {\ft $2$};
      \node at (2.5,2.5) {\ft $4$};
      \node at (3.5,2.5) {\ft $4$};
      \node at (4.5,2.5) {\ft $4$};
      \node at (5.5,2.5) {\ft $2$};
      \node at (6.5,2.5) {\ft $2$};
      \node at (7.5,2.5) {\ft $1$};
      \node at (8.5,2.5) {\ft $1$};
           
      \node at (.5,3.5) {\ft $1$};
      \node at (1.5,3.5) {\ft $2$};
      \node at (2.5,3.5) {\ft $3$};
      \node at (3.5,3.5) {\ft $3$};
      \node at (4.5,3.5) {\ft $3$};
      \node at (5.5,3.5) {\ft $2$};
      \node at (6.5,3.5) {\ft $2$};
      \node at (7.5,3.5) {\ft $1$};
      \node at (8.5,3.5) {\ft $1$};      
    \end{scope}
  \end{tikzpicture}
\end{center}

Notice that $\sigma$ can be written as $\sigma = \sigma_1 \cdot
\sigma_2$ where $\sigma_1 = xyz$ and $\sigma_2 = z^2xy^2x$. Therefore
we get the following regular systems $S_1$ and $S_2$ of size 4 with signatures
$\sigma_1$ and $\sigma_2$. 
Notice that since $X(\sigma_1)=1$, $S_2$ is a vertical reflection of
its corresponding part in $S$.

\begin{center}
\begin{tikzpicture}[scale=.3]
\begin{scope}[xscale=.93]
\draw[blue!60!black!30!cyan](0,4)
\ls\ls\ds\ld\ld\de; 
\draw[blue!70!black!50!cyan](0,3)
\ls\ds\ld\de\us\ue;
\draw[blue!80!black!70!cyan](0,2)
\ds\de\us\lu\ue\ls;
\draw[blue!90!black!90!cyan](0,1)
\us\lu\lu\ue\ls\ls;
\end{scope}

\begin{scope}[xscale=.93, xshift = 12cm, yscale = -1, yshift = -5cm]
\draw[blue!60!black!30!cyan](0,1)
\us\ud\de\us\ud\de\ls\ls\us\ud\de\ls\ls; 
\draw[blue!70!black!50!cyan](0,2)
\ds\du\lu\ue\ls\ds\de\us\ue\ls\ds\de\ls;
\draw[blue!80!black!70!cyan](0,3)
\ls\ls\ds\ld\du\lu\lu\ue\ls\ls\ls\ds\de;
\draw[blue!90!black!90!cyan](0,4)
\ls\ls\ls\ls\ls\ls\ds\ld\ld\du\lu\lu\ue;
\end{scope}
\end{tikzpicture}
\end{center}

Their associated tableau are
$T_n^{\omega_1}$ and $T_n^{\omega_2}$ where $\omega_1 = (ab)(cd)$ and
$\omega = (cd)(dc)(b^2a^2)$.

 \begin{center}
  \begin{tikzpicture}
    \begin{scope}[scale = .4]
      \draw (0,0) --++ (3,0) --++ (0,4) --++ (-3,0) --cycle;
      
      \draw 
      (1,0) --++ (0,4) 
      (2,0) --++ (0,4); 
      
      \draw 
      (0,1) --++ (3,0) 
      (0,2) --++ (3,0) 
      (0,3) --++ (3,0);

      \node at (.5,.5) {\ft $2$};
      \node at (1.5,.5) {\ft $3$};
      \node at (2.5,.5) {\ft $4$};
          
      \node at (.5,1.5) {\ft $1$};
      \node at (1.5,1.5) {\ft $3$};
      \node at (2.5,1.5) {\ft $4$};
                
      \node at (.5,2.5) {\ft $1$};
      \node at (1.5,2.5) {\ft $2$};
      \node at (2.5,2.5) {\ft $4$};
           
      \node at (.5,3.5) {\ft $1$};
      \node at (1.5,3.5) {\ft $2$};
      \node at (2.5,3.5) {\ft $3$};
    \end{scope}

    \begin{scope}[scale = .4, xshift = 8cm]
      \draw (0,0) --++ (6,0) --++ (0,4) --++ (-6,0) --cycle;      
      \draw 
      (1,0) --++ (0,4) 
      (2,0) --++ (0,4) 
      (3,0) --++ (0,4) 
      (4,0) --++ (0,4) 
      (5,0) --++ (0,4); 
      
      \draw 
      (0,1) --++ (6,0) 
      (0,2) --++ (6,0) 
      (0,3) --++ (6,0);

      \node at (0.5,.5) {\ft $2$};
      \node at (1.5,.5) {\ft $3$};
      \node at (2.5,.5) {\ft $4$};
      \node at (3.5,.5) {\ft $4$};
      \node at (4.5,.5) {\ft $3$};
      \node at (5.5,.5) {\ft $2$};
     
      \node at (0.5,1.5) {\ft $3$};
      \node at (1.5,1.5) {\ft $4$};
      \node at (2.5,1.5) {\ft $4$};
      \node at (3.5,1.5) {\ft $3$};
      \node at (4.5,1.5) {\ft $1$};
      \node at (5.5,1.5) {\ft $1$};
           
      \node at (0.5,2.5) {\ft $4$};
      \node at (1.5,2.5) {\ft $4$};
      \node at (2.5,2.5) {\ft $2$};
      \node at (3.5,2.5) {\ft $2$};
      \node at (4.5,2.5) {\ft $1$};
      \node at (5.5,2.5) {\ft $1$};
           
      \node at (0.5,3.5) {\ft $3$};
      \node at (1.5,3.5) {\ft $3$};
      \node at (2.5,3.5) {\ft $2$};
      \node at (3.5,3.5) {\ft $2$};
      \node at (4.5,3.5) {\ft $1$};
      \node at (5.5,3.5) {\ft $1$};      
    \end{scope}
  \end{tikzpicture}
\end{center}
\end{example}

\section{Characterization of regular systems} \label{sec:Kara}

We are ready to state our characterization of regular systems
(Proposition \ref{kara}). This is given in terms of certain
combinatorial conditions on the signatures of $k$-crossing systems of
size 3. We also deduce some consequences of the characterization which
will be used in the proofs of Theorems \ref{UL envelope} and
\ref{envelope theorem}. The proof of Proposition \ref{kara} is
given in the end of this section.

\begin{define}
  Let $\sigma$ be the signature of a $k$-crossing system of size 3. We
  say that $\sigma$ is \df{extendable} if there exists a
  regular system of size $4$ with signature $\sigma$.
\end{define}

\begin{remark} \label{nextend}
  By definition, if $S$ is a regular system with signature $\sigma$,
  then $\sigma$ is necessarily extendable, since a regular system has
  size at least 4 and the property of being regular is inherited by
  all subsystems of size 4. A less obvious fact is that for every
  extendable signature $\sigma$ and every integer $n>4$ there exists a
  regular system of size $n$ with signature $\sigma$. Thus the
  characterization of regular system amounts to the characterization
  of extendable signatures. The proof of this fact will be implicit in
  our proof of Proposition \ref{kara}. 
\end{remark}

We start with a few simple reductions. First notice that there are
certain invariant symmetries. For instance, a signature $\sigma$ is
extendable if and only if the {\em reverse} signature, $-\sigma$, is
extendable. This is simply the effect of a horizontal (and possibly
vertical) reflection of the corresponding system of paths. The other
invariant symmetry is that of {\em interchanging} the $x$'s 
and $z$'s in $\sigma$. This corresponds to a vertical reflection of the
corresponding system of paths. These symmetries will be used
to reduce some of the case analysis in our arguments.  

\begin{define} A non-empty word $\omega \in
  \{x,y,z\}^\star$ is called \df{reducible} if there is a non-empty proper
  initial substring $\omega'$ of $\omega$ such that $X(\omega') =
  Y(\omega') = Z(\omega')$. If $\omega$ is not reducible, then
  $\omega$ is called \df{irreducible}. 
\end{define}

In view of Proposition \ref{factorize} it suffices to characterize
extendable irreducible signatures.

\begin{prop}\label{kara}
  If $\sigma$ is an irreducible signature, then $\sigma$ is
  extendable if and only if the following conditions hold.
  \begin{enumerate}
  \item There exists an $\omega \in W_{_{(ab,cd)}}$ such that
    $T^\omega_3$ is the tableau associated to a system with signature $\sigma$.
     
 \item $\sigma = \alpha\cdot \beta \cdot \gamma$ or $\sigma =
    \gamma\cdot \beta \cdot \alpha$ such that 
\[
\left(\begin{array}{ccc}
 X(\alpha) & Y(\alpha) & Z(\alpha) \\
 X(\beta) & Y(\beta) & Z(\beta) \\
 X(\gamma) & Y(\gamma) & Z(\gamma) 
\end{array}\right) = 
\left(\begin{array}{ccc}
 p & p & 0 \\
 q & 0 & p \\
 0 & q & q 
\end{array}\right) 
\] where $p$ and $q$ are non-negative integers not both equal to 0.
  \end{enumerate}
Furthermore, if $S$ is a regular system labeled by $[n]$ with
irreducible signature $\sigma$ and $\omega\in W_{_{(ab,cd)}}$ is the
word satisfying condition (1), then $T^\omega_n$ is the
 tableau associated to $S$. 
\end{prop}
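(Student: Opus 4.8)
The plan is to recast everything through the correspondence of Proposition~\ref{tab-corr}, $\omega\mapsto T^\omega_n$, between $W_{_{(ab,cd)}}$ and pangrammatic regular tableaux, and to control the condition ``geometric'' via Proposition~\ref{val-match}. Observe first that a regular system $S$ with signature $\sigma$ is $k$-crossing for some $k$ (every pair of paths crosses the same positive number of times, since the common signature is nonempty), so its associated tableau $T$ is regular by Lemma~\ref{system-tableau} and is pangrammatic; hence $T=T^\omega_n$ for a unique $\omega\in W_{_{(ab,cd)}}$, and by Corollary~\ref{reg-line} we have $T_{_{[3]}}=T^\omega_3$, which is the associated tableau of the $3$-element subsystem of $S$ and so has signature $\sigma$. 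This already settles condition (1) in the forward direction, and it shows that the relevant $\omega$ is the unique element of $W_{_{(ab,cd)}}$ satisfying (1): the associated tableau of a size-$3$ system is determined by its signature (Lemma~\ref{signa-local}), and $\phi_3$ is injective because the construction in the proof of Proposition~\ref{3tab-bal-ref} reconstructs $\omega$ from $T^\omega_3$, using that a balanced word decomposes uniquely into balanced blocks. The ``furthermore'' clause then drops out: a regular system $S$ labeled by $[n]$ with signature $\sigma$ has associated tableau $T^{\omega'}_n$ for some $\omega'\in W_{_{(ab,cd)}}$, and the same reasoning shows $\omega'$ satisfies (1), so $\omega'=\omega$ and $T=T^\omega_n$.

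The necessity of (2) is the heart of the matter and the step I expect to be the main obstacle. Suppose $\sigma$ is irreducible and $T=T^\omega_4$ is the associated tableau of a size-$4$ regular system; then $T^\omega_3$ is geometric (this is condition (1)) and, by Proposition~\ref{val-match}, $T^\omega_4$ has a valid matching. Write $\omega=\omega_1\cdots\omega_m$ as a concatenation of balanced blocks, each of one of the four types $ab$, $ba$, $cd$, $dc$; since $\omega\in W_{_{(ab,cd)}}$ there is at least one block of ``$ab$-type'' and at least one of ``$cd$-type''. Using $\sigma_{_{\{x,y\}}}\sim\omega_{_{\{c,d\}}}$, $\sigma_{_{\{x,z\}}}\sim\omega_{_{\{a,c\}}}$ and $\sigma_{_{\{y,z\}}}\sim\omega_{_{\{a,b\}}}$ (Lemma~\ref{signa-local}), a proper nonempty prefix of $\sigma$ with equally many $x$'s, $y$'s and $z$'s corresponds to a proper nonempty prefix of $\omega$ with equally many $\{a,b\}$-letters and $\{c,d\}$-letters, which irreducibility forbids. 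I would then run Algorithm~\ref{algomatch} on $T^\omega_4$ and argue that any ``out of order'' placement of the $ab$-type and $cd$-type blocks of $\omega$ either forces such a reducible prefix of $\sigma$ or leads to a stage with $T'\neq(\emptyset,\dots,\emptyset)$ that has no weakly adjacent pair, contradicting the existence of a valid matching. Cutting down the casework with the reverse-signature and $x\leftrightarrow z$ symmetries, this should pin $\omega$ down so precisely that $\sigma_{_{\{x,y\}}}$, $\sigma_{_{\{x,z\}}}$, $\sigma_{_{\{y,z\}}}$ merge into a signature $\sigma=\alpha\beta\gamma$ (or $\gamma\beta\alpha$) with the stated triangular count matrix for some nonnegative integers $p,q$ not both zero. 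This block-bookkeeping case analysis is the bulk of the work.

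For sufficiency, assume (1) and (2) and let $\omega$ be the word from (1). By (1) the tableau $T^\omega_3$ is geometric, hence so is every $3$-element restriction of $T^\omega_4$ (all are $\sim T^\omega_3$ by Corollary~\ref{reg-line}); by Proposition~\ref{val-match} it therefore remains only to produce a valid matching for $T^\omega_4$. Condition (2) determines $\omega$ explicitly up to the parameters $p,q$, and from this explicit form one runs Algorithm~\ref{algomatch} and checks directly that it returns \texttt{True} (equivalently, one writes a size-$4$ wiring diagram straight from this form and verifies via Proposition~\ref{determine} that its associated tableau is $T^\omega_4$). Then $T^\omega_4$ is geometric, hence the associated tableau of a system that is regular by Proposition~\ref{3tab-bal-ref}; its $3$-element subsystem has associated tableau $T^\omega_3$ and therefore signature $\sigma$, so $\sigma$ is extendable. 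The identical construction works for every $n\geq4$, which as a byproduct establishes the fact recorded in Remark~\ref{nextend}.
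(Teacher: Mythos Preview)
Your framework is right and matches the paper: reduce to the correspondence $\omega\mapsto T^\omega_n$ via Proposition~\ref{tab-corr}, and control ``geometric'' via Proposition~\ref{val-match}. Your derivation of condition~(1) and of the ``furthermore'' clause is essentially the paper's.

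The real gap is in your sufficiency argument. The claim that ``condition~(2) determines $\omega$ explicitly up to the parameters $p,q$'' is false: many distinct $\omega$'s (and signatures) share the same $p,q$. What condition~(2) actually buys, but only after feeding in the parity constraints of Proposition~\ref{parity}, is that $\omega$ has the shape $(a^pb^p)\cdot\delta\cdot(c^qd^q)$ with $\delta$ a concatenation of \emph{odd} balanced blocks. This is precisely Lemma~\ref{omega-form}, and it is this structural form, not the mere values of $p,q$, that makes the valid matching on $T^\omega_n$ visible: the rows split into a staircase of constant $U_i$-boxes, the middle pieces $\phi_{i,n}(\delta)$, and the $L_i$-boxes, and Algorithm~\ref{algomatch} peels off $U_i\cup M_i\cup L_i$ in order. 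Without proving (or even stating) an analogue of Lemma~\ref{omega-form}, you cannot ``run Algorithm~\ref{algomatch} and check directly'' as you propose.

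For necessity of~(2) your route also diverges from the paper's. You plan to argue entirely via failure of a valid matching on $T^\omega_4$. The paper instead works mostly with the refinement conditions of Remark~\ref{condition1} (equivalently Proposition~\ref{3tab-bal-ref}) together with Proposition~\ref{parity} to force the decomposition $\sigma=\alpha\beta\gamma$, and only in a single residual subcase (the one behind Example~\ref{special}) does it show that Algorithm~\ref{algomatch} stalls on $T^\omega_4$. Your approach might be made to work, but as written it is just a declaration that a ``block-bookkeeping case analysis'' exists; none of it is carried out, and in particular your correspondence between reducibility of $\sigma$ and prefixes of $\omega$ needs justification (prefixes of $\sigma$ are not in any direct bijection with prefixes of $\omega$; the link goes through Proposition~\ref{factorize}).
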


\begin{remark} \label{condition1}
Condition {\em (1)} of Proposition \ref{kara} is a purely combinatorial condition which relates the signature of a regular system to the associated regular tableau. Notice that if there is a word $\omega \in W_{_{(ab,cd)}}$ which satisfies condition {\em (1)} of Proposition \ref{kara}, then it is unique and each distinct letter occurs the same number of times. We call this the word \df{associated} to the irreducible signature $\sigma$. A signature and its associated word are related by Lemma \ref{signa-local}, and Proposition \ref{3tab-bal-ref} implies that a signature $\sigma$ has an associated word if and only if the following conditions hold:
\begin{itemize}
\item $\sigma_{_{\{x,y\}}}$ is balanced and $\langle
  \sigma_{_{\{x,y\}}} \rangle$ is a refinement of $\exp_z(\sigma_{_{\{x,z\}}})$.
\item $\sigma_{_{\{y,z\}}}$ is balanced and $\langle
  \sigma_{_{\{y,z\}}} \rangle$ is a refinement of $\exp_x(\sigma_{_{\{x,z\}}})$.
\end{itemize}
\end{remark}

\begin{remark}
  Condition {\em (2)} of Proposition \ref{kara} is a geometric condition related to the extendability. As soon as an irreducible signature $\sigma$ has an associated word $\omega \in W_{_{(ab,cd)}}$, we automatically obtain the tableaux $T^\omega_n$. For $n=3$ this is the tableau associated to a system with signature $\sigma$, and for all $n\geq 4$ these tableau are regular. It turns out that the tableaux $T^\omega_n$ are geometric for all $n\geq 4$ if and only of condition {\em (2)} holds.
\end{remark}

Proposition \ref{kara} provides a simple way to determine
whether an irreducible signature is extendable. Condition {\em (1)}
tells us that the restrictions $\sigma_{_{\{x,y\}}}$ and
$\sigma_{_{\{y,z\}}}$ factor into balanced 
blocks, and the restriction $\sigma_{_{\{x,z\}}}$ tells us how these
balanced blocks should be arranged to form the associated word
$\omega$. We illustrate this with several examples.

\begin{example}
Let $\sigma = x y^4 x^3 z^4 x^2 z y^2 z$.  It is easily verified that
$\sigma$ is irreducible and satisfies Proposition \ref{parity}, which
implies $\sigma$ is the signature of a system. Now we show that
$\sigma$ satisfies the conditions of Proposition \ref{kara}.  

For condition {\em (1)}, we find the associated word $\omega$ by first
factoring $\sigma_{_{\{x,y\}}}$ and $\sigma_{_{\{x,y\}}}$ into
balanced blocks, 
\[ 
\sigma_{_{\{x,y\}}} = (xy)(y^3x^3)(x^2y^2) \sim \omega_{_{\{c,d\}}}
\quad \mbox{ and } \quad
\sigma_{_{\{y,z\}}} = (y^4z^4)(zy)(yz) \sim \omega_{_{\{a,b\}}}. \]

Then interlace the corresponding blocks of $\omega_{_{\{c,d\}}}$ and
$\omega_{_{\{a,b\}}}$ according to $\sigma_{_{\{x,z\}}}$ as in
Proposition~\ref{3tab-bal-ref}.  

\[\begin{array}{r@{\;}c@{}c@{}c@{}c@{}c@{}c@{\vspace{-4pt}}}
\sigma_{_{\{x,z\}}} = & x^4 & z & z^3 & x & x & z^2 \\ \\
& \tikz \draw[black!40,-latex] (0,0) -- (0,-11pt);  
& \tikz \draw[black!40,-latex] (0,0) -- (0,-11pt);  
& \tikz \draw[black!40,-latex] (0,0) -- (0,-11pt);  
& \tikz \draw[black!40,-latex] (0,0) -- (0,-11pt);  
& \tikz \draw[black!40,-latex] (0,0) -- (0,-11pt);  
& \tikz \draw[black!40,-latex] (0,0) -- (0,-11pt);  
\\ \\
\omega = & (a^4b^4) & (cd) & (d^3c^3) & (ba) & (ab) & (c^2d^2).
\end{array} \]

For condition {\em (2)}, $\sigma$ factors as shown below.

\bigskip

\begin{center}
\begin{tikzpicture}[scale=0.32,every node/.style={anchor=mid}]

\draw[blue!60!black!30!cyan] (0,2)
\g{1} \ds \du \ud \du \ue \g{3}
\ds \du \ud \du \ue \g{2}
\ds \dg{1} \du \ug{1} \ue
;

\draw[blue!75!black!60!cyan] (0,1)
\ds \de \g{3} \us \ud \du \ue
\us \ud \du \ud \dg{1} \du \ue
\us \ue \g{1} \ds \de 
;

\draw[blue!90!black!90!cyan] (0,0)
\us \ug{1} \ud \du \ud \dg{1} \du \ud \de
\g{4} \us \ud \de 
\g{1} \us \ud \de \g{1}
;

\path
(1,-1) node (a0) {\ft $x$}
++(1,0) node {\ft $y$}
++(1,0) node {\ft $y$}
++(1,0) node {\ft $y$}
++(1,0) node {\ft $y$}
++(1,0) node {\ft $x$}
++(1,0) node {\ft $x$}
++(1,0) node (a1) {\ft $x$}
++(2,0) node (b0) {\ft $z$}
++(1,0) node {\ft $z$}
++(1,0) node {\ft $z$}
++(1,0) node {\ft $z$}
++(1,0) node {\ft $x$}
++(1,0) node (b1) {\ft $x$}
++(2,0) node (c0) {\ft $z$}
++(1,0) node {\ft $y$}
++(1,0) node {\ft $y$}
++(1,0) node (c1) {\ft $z$}
;

\draw [decorate,decoration={brace,amplitude=5pt,mirror}]
(a0.south west) -- (a1.south east) node [midway,yshift=-12pt,anchor=mid] {\ft $\alpha$}
;
\draw [decorate,decoration={brace,amplitude=5pt,mirror}]
(b0.south west) -- (b1.south east) node [midway,yshift=-12pt,anchor=mid] {\ft $\beta$}
;
\draw [decorate,decoration={brace,amplitude=5pt,mirror}]
(c0.south west) -- (c1.south east) node [midway,yshift=-12pt,anchor=mid] {\ft $\gamma$}
;

\end{tikzpicture}
\end{center}

Informally, we may interpret paths in a regular system as ``taking
turns'' crossing the other paths, and interpret the factorization in
condition (2) as indicating how each path crosses other paths on its
turn. This can be seen more precisely in the proof of Proposition
\ref{kara}, but for now just consider the factors in this example.  
Here, $\sigma = \alpha \cdot \beta \cdot \gamma$ indicates that the
paths take turns in order $1,\dots,n$. The term ${\alpha =
  (xy)(y^3x^3)}$ indicates that path $i$ crosses paths $[n]_i^+$ on
its turn first in ascending order one at a time and then in descending
order three at a time. The term ${\gamma = (zy)(yz)}$ indicates that
path $i$ crosses paths $[n]_i^-$ on its turn first in descending and
then in ascending order one at a time, and the term $\beta = (z^4x^2)$
indicates that path $i$ first crosses paths 
$[n]_i^+$ then paths $[n]_i^-$.  Below we see a regular system $S'$
with associated tableau $T_5^\omega$.  ``Turns'' are indicated by
thickened paths in the system  and by unshaded boxes in the tableau.  

\bigskip

\begin{center}
\begin{tikzpicture}[xscale=0.24,yscale=0.3]

\draw[blue!60!black!30!cyan] (0,4)
\g{3} \ds \du \ud \du \ue
\g{11} \ds \du \ud \du \ue
\g{9} \ds \du \ud \du \ue
\g{7} \ds \du \ud \du \ue \g{6} coordinate (a5)
;

\draw[blue!70!black!50!cyan] (0,3)
\g{2} \ds \de \g{3} \us \ud \du \ue
\g{7} \ds \de \g{3} \us \ud \du \ue
\g{5} \ds \de \g{3} \us \ud \du \ue \g{4} coordinate (a4)
;

\draw[blue!80!black!70!cyan] (0,2)
\g{1} \ds \de \g{7} \us \ud \du \ue
\g{3} \ds \de \g{7} \us \ud \du \ue \g{2} coordinate (a3)
;

\draw[blue!90!black!90!cyan] (0,1)
\ds \de \g{11} \us \ud \du \ue coordinate (a2)
;

;

\draw[thick,blue!60!black!30!cyan] (a5)
\ds \dg{3} \du \ug{3} \ue
;

\draw[thick,blue!70!black!50!cyan] (a4)
\us \ud \du \ud \dg{3} \du \ug{2} \ue coordinate (b4)
;

\draw[thick,blue!80!black!70!cyan] (a3)
\us \ug{1} \ud \du \ud \dg{1} \du \ud \dg{2} \du \ug{1} \ue coordinate (b3)
;

\draw[thick,blue!90!black!90!cyan] (a2)
\us \ug{2} \ud \du \ud \dg{1} \du \ud \dg{1} \du \ud \dg{1} \du \ue coordinate (b2)
;

\draw[thick,blue!70!black] (0,0)
\us \ug{3} \ud \du \ud \dg{1} \du \ud \dg{1} \du \ud \dg{1} \du \ud \de coordinate (b1)
;

\draw[blue!70!black!50!cyan] (b4)
\us \ue \g{5} \ds \de
;

\draw[blue!80!black!70!cyan] (b3)
\g{4} \us \ue \g{3} \ds \de
\g{1} \us \ue \g{3} \ds \de \g{1}
;

\draw[blue!90!black!90!cyan] (b2)
\g{8} \us \ue \g{1} \ds \de
\g{5} \us \ue \g{1} \ds \de
\g{3} \us \ue \g{1} \ds \de \g{2}
;

\draw[blue!70!black] (b1)
\g{12} \us \ud \de
\g{9} \us \ud \de
\g{7} \us \ud \de
\g{5} \us \ud \de \g{3}
;

\end{tikzpicture}

\bigskip

\begin{tikzpicture}[scale=.4]

\fill[blue, opacity = .15]
(0,1) --++ (4,0) --++ (0,1) --++ (4,0) --++ (0,1) --++ (4,0) --++ (0,1) --++ (4,0) --++ (0,1) -- (0,5) -- cycle
(24,4) --++ (-2,0) --++ (0,-1) --++ (-2,0) --++ (0,-1) --++ (-2,0) --++ (0,-1) --++ (-2,0) --++ (0,-1) -- (24,0) -- cycle
;

\foreach \i in {0,...,24}
{ \draw (\i,0) -- (\i,5); }
\foreach \i in {0,...,5}
{ \draw (0,\i) -- (24,\i);}

\path 
(.5,.5) node {\ft $2$}
++(1,0) node {\ft $3$}
++(1,0) node {\ft $4$}
++(1,0) node {\ft $5$}
++(1,0) node {\ft $5$}
++(1,0) node {\ft $5$}
++(1,0) node {\ft $5$}
++(1,0) node {\ft $4$}
++(1,0) node {\ft $4$}
++(1,0) node {\ft $4$}
++(1,0) node {\ft $3$}
++(1,0) node {\ft $3$}
++(1,0) node {\ft $3$}
++(1,0) node {\ft $2$}
++(1,0) node {\ft $2$}
++(1,0) node {\ft $2$}
++(1,0) node {\ft $2$}
++(1,0) node {\ft $2$}
++(1,0) node {\ft $3$}
++(1,0) node {\ft $3$}
++(1,0) node {\ft $4$}
++(1,0) node {\ft $4$}
++(1,0) node {\ft $5$}
++(1,0) node {\ft $5$}
;
\path 
(.5,1.5) node {\ft $1$}
++(1,0) node {\ft $1$}
++(1,0) node {\ft $1$}
++(1,0) node {\ft $1$}
++(1,0) node {\ft $3$}
++(1,0) node {\ft $4$}
++(1,0) node {\ft $5$}
++(1,0) node {\ft $5$}
++(1,0) node {\ft $5$}
++(1,0) node {\ft $5$}
++(1,0) node {\ft $4$}
++(1,0) node {\ft $4$}
++(1,0) node {\ft $4$}
++(1,0) node {\ft $3$}
++(1,0) node {\ft $3$}
++(1,0) node {\ft $3$}
++(1,0) node {\ft $1$}
++(1,0) node {\ft $1$}
++(1,0) node {\ft $3$}
++(1,0) node {\ft $3$}
++(1,0) node {\ft $4$}
++(1,0) node {\ft $4$}
++(1,0) node {\ft $5$}
++(1,0) node {\ft $5$}
;
\path 
(.5,2.5) node {\ft $1$}
++(1,0) node {\ft $1$}
++(1,0) node {\ft $1$}
++(1,0) node {\ft $1$}
++(1,0) node {\ft $2$}
++(1,0) node {\ft $2$}
++(1,0) node {\ft $2$}
++(1,0) node {\ft $2$}
++(1,0) node {\ft $4$}
++(1,0) node {\ft $5$}
++(1,0) node {\ft $5$}
++(1,0) node {\ft $5$}
++(1,0) node {\ft $5$}
++(1,0) node {\ft $4$}
++(1,0) node {\ft $4$}
++(1,0) node {\ft $4$}
++(1,0) node {\ft $2$}
++(1,0) node {\ft $1$}
++(1,0) node {\ft $1$}
++(1,0) node {\ft $2$}
++(1,0) node {\ft $4$}
++(1,0) node {\ft $4$}
++(1,0) node {\ft $5$}
++(1,0) node {\ft $5$}
;
\path 
(.5,3.5) node {\ft $1$}
++(1,0) node {\ft $1$}
++(1,0) node {\ft $1$}
++(1,0) node {\ft $1$}
++(1,0) node {\ft $2$}
++(1,0) node {\ft $2$}
++(1,0) node {\ft $2$}
++(1,0) node {\ft $2$}
++(1,0) node {\ft $3$}
++(1,0) node {\ft $3$}
++(1,0) node {\ft $3$}
++(1,0) node {\ft $3$}
++(1,0) node {\ft $5$}
++(1,0) node {\ft $5$}
++(1,0) node {\ft $5$}
++(1,0) node {\ft $5$}
++(1,0) node {\ft $3$}
++(1,0) node {\ft $2$}
++(1,0) node {\ft $1$}
++(1,0) node {\ft $1$}
++(1,0) node {\ft $2$}
++(1,0) node {\ft $3$}
++(1,0) node {\ft $5$}
++(1,0) node {\ft $5$}
;
\path 
(.5,4.5) node {\ft $1$}
++(1,0) node {\ft $1$}
++(1,0) node {\ft $1$}
++(1,0) node {\ft $1$}
++(1,0) node {\ft $2$}
++(1,0) node {\ft $2$}
++(1,0) node {\ft $2$}
++(1,0) node {\ft $2$}
++(1,0) node {\ft $3$}
++(1,0) node {\ft $3$}
++(1,0) node {\ft $3$}
++(1,0) node {\ft $3$}
++(1,0) node {\ft $4$}
++(1,0) node {\ft $4$}
++(1,0) node {\ft $4$}
++(1,0) node {\ft $4$}
++(1,0) node {\ft $4$}
++(1,0) node {\ft $3$}
++(1,0) node {\ft $2$}
++(1,0) node {\ft $1$}
++(1,0) node {\ft $1$}
++(1,0) node {\ft $2$}
++(1,0) node {\ft $3$}
++(1,0) node {\ft $4$}
;

\end{tikzpicture}

\end{center}

\medskip

Compare path 3 of $S'$ on its turn to the terms $\alpha$ and $\gamma$. This example is particularly simple in that there is no interlacing of the order path $i$ crosses paths $[n]_i^+$ and $[n]_i^-$ on its turn, and path $i$ crosses all other paths an even number of times, returning to its original position in order among the other paths at the end of its turn.

\end{example}

\begin{example}
Let $\sigma = xy^2x^2yzx^4z^3y^4z^3$ and observe that $\sigma$ is the
signature of a system of paths and is also irreducible. 

For condition {\em (1)}, first notice that $\sigma_{_{\{x,y\}}}$ and
$\sigma_{_{\{y,z\}}}$ can be written as
\[\sigma_{_{\{x,y\}}} = (xy)(yx)(xy)(x^4y^4) \quad \mbox{ and } \quad
\sigma_{_{\{y,z\}}} = (y^3z^3)(zy)(y^3z^3),\]

and that $\sigma_{_{\{x,z\}}} = x^3zx^4z^6$. This gives us the associated word
\[\omega = (a^3b^3)(cd)(ba)(a^3b^3)(dc)(cd)(c^4d^4).\]

For condition {\em (2)}, $\sigma$ factors as shown below.

\bigskip

\begin{center}
\begin{tikzpicture}[scale=0.32,every node/.style={anchor=mid}]

\draw[blue!60!black!30!cyan] (0,2)
\g{1} \ds \du \ue \g{1} \ds \de
\ds \de \g{3} \us \ud \de
\us \ug{1} \ud \du \ud \dg{1} \du \ud \de
;

\draw[blue!75!black!60!cyan] (0,1)
\ds \de \g{1} \us \ud \de \g{1}
\us \ug{1} \ud \du \ud \dg{1} \du \ue
\ds \de \g{3} \us \ud \du \ue
;

\draw[blue!90!black!90!cyan] (0,0)
\us \ug{1} \ud \dg{1} \du \ug{1} \ue
\g{1} \ds \du \ud \du \ue \g{3}
\ds \du \ud \du \ue \g{3}
;

\path
(1,-1) node (a0) {\ft $x$}
++(1,0) node {\ft $y$}
++(1,0) node {\ft $y$}
++(1,0) node {\ft $x$}
++(1,0) node {\ft $x$}
++(1,0) node (a1) {\color{white}\ft $x$} node {\ft $y$}
++(2,0) node (b0) {\ft $z$}
++(1,0) node {\ft $x$}
++(1,0) node {\ft $x$}
++(1,0) node {\ft $x$}
++(1,0) node {\ft $x$}
++(1,0) node {\ft $z$}
++(1,0) node (b1) {\ft $z$}
++(2,0) node (c0) {\ft $z$}
++(1,0) node {\ft $y$}
++(1,0) node {\ft $y$}
++(1,0) node {\ft $y$}
++(1,0) node {\ft $y$}
++(1,0) node {\ft $z$}
++(1,0) node {\ft $z$}
++(1,0) node (c1) {\ft $z$}
;

\draw [decorate,decoration={brace,amplitude=5pt,mirror}]
(a0.south west) -- (a1.south east) node [midway,yshift=-12pt,anchor=mid] {\ft $\alpha$}
;
\draw [decorate,decoration={brace,amplitude=5pt,mirror}]
(b0.south west) -- (b1.south east) node [midway,yshift=-12pt,anchor=mid] {\ft $\beta$}
;
\draw [decorate,decoration={brace,amplitude=5pt,mirror}]
(c0.south west) -- (c1.south east) node [midway,yshift=-12pt,anchor=mid] {\ft $\gamma$}
;

\end{tikzpicture}
\end{center}

Below we see a regular system with associated tableau $T_4^\omega$.

\bigskip

\begin{center}
\begin{tikzpicture}[scale=.3]
\begin{scope}[xscale=.93]
\draw[blue!60!black!30!cyan](0,4)
\ls\ls\ds\du\ue\ls\ls\ls\ds \de
\g{1} \ds\de \ls\ls\ls\us\ue\ls\ds\de
\ds\de\ls\ls\ls\ls\ls\ls\ls\us\ud\de coordinate (a4)
;
\draw[blue!70!black!50!cyan](0,3)
\ls\ds\de\ls\us\ue\ls\ds\de\g{1}
\ds\de\ls
\ls\ls\ls\ls\us\ud\de\g{1} coordinate (a3)
;
\draw[blue!80!black!70!cyan](0,2)
\ds\de\ls\ls\ls\us\ud\de\g{2} coordinate (a2)
;

\draw[thick,blue!60!black!30!cyan](a4)
\us\lu\lu\ud\du\ud\ld\du\ud\ld\du\ud\de
;
\draw[thick,blue!70!black!50!cyan](a3)
\us\lu\lu\ud\du\ud\ld\du\ud\ld\du\ue coordinate (b3)
;
\draw[thick,blue!80!black!70!cyan](a2)
\us\lu\lu\ud\du\ud\ld\ld\du\lu\ue coordinate (b2)
;  
\draw[thick,blue!90!black!90!cyan](0,1)
\us\lu\lu\ud\ld\ld\du\lu\lu\ue coordinate (b1)
;

\draw[blue!70!black!50!cyan](b3)
\ds\de\ls\ls\ls\ls\ls\ls\ls\us\ud\du\ue
;
\draw[blue!80!black!70!cyan](b2)
\ls\ds\de\ls\ls\ls\us\ud\du\ue\ls\ls\ls\ds\de\ls
\ls\ls\us\ud\du\ue\ls\ls\ls
;  
\draw[blue!90!black!90!cyan](b1)
\g{2}\ds\du\ud\du\ue\ls\ls\ls\ls\ls\ls\ds\du\ud\du\ue\ls\ls\ls\ls\ls\ls\ls\ds\du
\ud\du\ue\ls\ls\ls\ls\ls\ls
;
\end{scope}
\end{tikzpicture}
\end{center}

\bigskip

\begin{center}
  \begin{tikzpicture}
    \begin{scope}[scale = .4]

\fill[blue, opacity = .15]
(0,1) --++ (3,0) --++ (0,1) --++ (3,0) --++ (0,1) --++ (3,0) --++ (0,1) -- (0,4) -- cycle
(21,3) --++(-4,0) --++ (0,-1) --++(-4,0) --++ (0,-1) --++(-4,0) --++ (0,-1) -- (21,0) -- cycle
;

      \draw (0,0) --++ (21,0) --++ (0,4) --++ (-21,0) --cycle;
      
      \draw 
      (1,0) --++ (0,4) 
      (2,0) --++ (0,4) 
      (3,0) --++ (0,4) 
      (4,0) --++ (0,4) 
      (5,0) --++ (0,4) 
       (6,0) --++ (0,4) 
      (7,0) --++ (0,4) 
      (8,0) --++ (0,4) 
      (9,0) --++ (0,4) 
      (10,0) --++ (0,4) 
      (11,0) --++ (0,4) 
      (12,0) --++ (0,4) 
      (13,0) --++ (0,4) 
      (14,0) --++ (0,4) 
      (15,0) --++ (0,4) 
      (16,0) --++ (0,4) 
      (17,0) --++ (0,4) 
      (18,0) --++ (0,4) 
      (19,0) --++ (0,4) 
      (20,0) --++ (0,4) ;

      \draw (0,1) --++ (21,0) (0,2) --++ (21,0) (0,3) --++ (21,0);

      \node at (.5,.5) {\ft $2$};
      \node at (1.5,.5) {\ft $3$};
      \node at (2.5,.5) {\ft $4$};
      \node at (3.5,.5) {\ft $4$};
      \node at (4.5,.5) {\ft $3$};
      \node at (5.5,.5) {\ft $2$};
      \node at (6.5,.5) {\ft $2$};
      \node at (7.5,.5) {\ft $3$};
      \node at (8.5,.5) {\ft $4$};
      \node at (9.5,.5) {\ft $2$};
      \node at (10.5,.5) {\ft $2$};
      \node at (11.5,.5) {\ft $2$};
      \node at (12.5,.5) {\ft $2$};
      \node at (13.5,.5) {\ft $3$};
      \node at (14.5,.5) {\ft $3$};
      \node at (15.5,.5) {\ft $3$};
      \node at (16.5,.5) {\ft $3$};
      \node at (17.5,.5) {\ft $4$};
      \node at (18.5,.5) {\ft $4$};
      \node at (19.5,.5) {\ft $4$};
      \node at (20.5,.5) {\ft $4$};

      \node at (.5,1.5) {\ft $1$};
      \node at (1.5,1.5) {\ft $1$};
      \node at (2.5,1.5) {\ft $1$};
      \node at (3.5,1.5) {\ft $3$};
      \node at (4.5,1.5) {\ft $4$};
      \node at (5.5,1.5) {\ft $1$};
      \node at (6.5,1.5) {\ft $1$};
      \node at (7.5,1.5) {\ft $1$};
      \node at (8.5,1.5) {\ft $1$};
      \node at (9.5,1.5) {\ft $4$};
      \node at (10.5,1.5) {\ft $3$};
      \node at (11.5,1.5) {\ft $3$};
      \node at (12.5,1.5) {\ft $4$};
      \node at (13.5,1.5) {\ft $3$};
      \node at (14.5,1.5) {\ft $3$};
      \node at (15.5,1.5) {\ft $3$};
      \node at (16.5,1.5) {\ft $3$};
      \node at (17.5,1.5) {\ft $4$};
      \node at (18.5,1.5) {\ft $4$};
      \node at (19.5,1.5) {\ft $4$};
      \node at (20.5,1.5) {\ft $4$};
     
      \node at (.5,2.5) {\ft $1$};
      \node at (1.5,2.5) {\ft $1$};
      \node at (2.5,2.5) {\ft $1$};
      \node at (3.5,2.5) {\ft $2$};
      \node at (4.5,2.5) {\ft $2$};
      \node at (5.5,2.5) {\ft $2$};
      \node at (6.5,2.5) {\ft $4$};
      \node at (7.5,2.5) {\ft $2$};
      \node at (8.5,2.5) {\ft $1$};
      \node at (9.5,2.5) {\ft $1$};
      \node at (10.5,2.5) {\ft $1$};
      \node at (11.5,2.5) {\ft $1$};
      \node at (12.5,2.5) {\ft $2$};
      \node at (13.5,2.5) {\ft $2$};
      \node at (14.5,2.5) {\ft $2$};
      \node at (15.5,2.5) {\ft $4$};
      \node at (16.5,2.5) {\ft $4$};
      \node at (17.5,2.5) {\ft $4$};
      \node at (18.5,2.5) {\ft $4$};
      \node at (19.5,2.5) {\ft $4$};
      \node at (20.5,2.5) {\ft $4$};
     
      \node at (.5,3.5) {\ft $1$};
      \node at (1.5,3.5) {\ft $1$};
      \node at (2.5,3.5) {\ft $1$};
      \node at (3.5,3.5) {\ft $2$};
      \node at (4.5,3.5) {\ft $2$};
      \node at (5.5,3.5) {\ft $2$};
      \node at (6.5,3.5) {\ft $3$};
      \node at (7.5,3.5) {\ft $3$};
      \node at (8.5,3.5) {\ft $3$};
      \node at (9.5,3.5) {\ft $3$};
      \node at (10.5,3.5) {\ft $2$};
      \node at (11.5,3.5) {\ft $1$};
      \node at (12.5,3.5) {\ft $1$};
      \node at (13.5,3.5) {\ft $1$};
      \node at (14.5,3.5) {\ft $1$};
      \node at (15.5,3.5) {\ft $2$};
      \node at (16.5,3.5) {\ft $2$};
      \node at (17.5,3.5) {\ft $2$};
      \node at (18.5,3.5) {\ft $3$};
      \node at (19.5,3.5) {\ft $3$};
      \node at (20.5,3.5) {\ft $3$};
    \end{scope}
  \end{tikzpicture}
\end{center}
\end{example}

\begin{example}\label{special}
Here we give an example of a signature which satisfies condition {\em
  (1)} of Proposition \ref{kara}, but not condition {\em
  (2)}. Consider the following system.

\medskip

\begin{center}
\begin{tikzpicture}[scale=.3]
\begin{scope}[xscale=.93]
\draw[blue!90!black!30!cyan](0,3)\ls\ds\du\ue\ls\ls\ls\ds\ld\du\ud\du
\lu\ud\ld\du\ue;
\draw[blue!75!black!60!cyan](0,2)\ds\de\ls\us\ud\du\ud\de\us\ud\du\ud
\de\ls\us\ud\de;  
\draw[blue!50!black!80!cyan](0,1)\us\lu\ud\ld\du\ud\du\lu\ue\ls\ls\ls
\ds\du\ue\ls\ls;
\end{scope}
\end{tikzpicture}
\end{center}

Its signature is $\sigma = xy^2x^4yz^4y^2z$ and the associated word is $\omega =
(a^3b^3)(ba)(ab)(cd)(dc)(c^3d^3)$. 

we have 
$\sigma_{_{\{x,y\}}} \sim \omega_{_{\{c,d\}}}$,
$\sigma_{_{\{x,z\}}} \sim \omega_{_{\{a,c\}}}$, and
$\sigma_{_{\{y,z\}}} \sim \omega_{_{\{a,b\}}}$, which shows that
condition {\em (1)} is satisfied.

We now show that $\sigma$ is not
extendable by applying Algorithm \ref{algomatch} to the tableau
$T_4^\omega$. By Proposition \ref{val-match}, $T_4^\omega$ is geometric
if and only if it has a valid matching. In the figure below, the part
of $T_4^\omega$ which is shaded consists of the weakly 
adjacent pairs which get deleted during Algorithm 
\ref{algomatch}. Observe that after deleting these entries, the
remaining tableau has no weakly adjacent pairs. This proves that
$\omega$ is not extendable.

\medskip

\begin{center}
  \begin{tikzpicture}
    \begin{scope}[scale = .4]
      \fill[blue, opacity=.15] 
      (0,0) --++ (2,0) --++ (0,4) --++ (-2,0) --cycle;
      \fill[blue, opacity=.15] 
      (2,0) --++ (2,0) --++ (0,3) --++ (-2,0) --cycle;
      \fill[blue, opacity=.15] 
      (4,0) --++ (2,0) --++ (0,2) --++ (-2,0) --cycle;
      \fill[blue, opacity=.15] 
      (6,0) --++ (4,0) --++ (0,1) --++ (-4,0) --cycle;

      \draw (0,0) --++ (15,0) --++ (0,4) --++ (-15,0) --cycle;      
      \draw 
      (1,0) --++ (0,4) 
      (2,0) --++ (0,4) 
      (3,0) --++ (0,4) 
      (4,0) --++ (0,4) 
      (5,0) --++ (0,4) 
       (6,0) --++ (0,4) 
      (7,0) --++ (0,4) 
      (8,0) --++ (0,4) 
      (9,0) --++ (0,4) 
      (10,0) --++ (0,4) 
      (11,0) --++ (0,4) 
      (12,0) --++ (0,4) 
      (13,0) --++ (0,4) 
      (14,0) --++ (0,4) ;

      \draw (0,1) --++ (15,0) (0,2) --++ (15,0) (0,3) --++ (15,0);

      \node at (.5,.5) {\ft $2$};
      \node at (1.5,.5) {\ft $3$};
      \node at (2.5,.5) {\ft $4$};
      \node at (3.5,.5) {\ft $4$};
      \node at (4.5,.5) {\ft $3$};
      \node at (5.5,.5) {\ft $2$};
      \node at (6.5,.5) {\ft $2$};
      \node at (7.5,.5) {\ft $2$};
      \node at (8.5,.5) {\ft $2$};
      \node at (9.5,.5) {\ft $3$};
      \node at (10.5,.5) {\ft $3$};
      \node at (11.5,.5) {\ft $3$};
      \node at (12.5,.5) {\ft $4$};
      \node at (13.5,.5) {\ft $4$};
      \node at (14.5,.5) {\ft $4$};

      \node at (.5,1.5) {\ft $1$};
      \node at (1.5,1.5) {\ft $1$};
      \node at (2.5,1.5) {\ft $1$};
      \node at (3.5,1.5) {\ft $1$};
      \node at (4.5,1.5) {\ft $1$};
      \node at (5.5,1.5) {\ft $3$};
      \node at (6.5,1.5) {\ft $4$};
      \node at (7.5,1.5) {\ft $4$};
      \node at (8.5,1.5) {\ft $3$};
      \node at (9.5,1.5) {\ft $3$};
      \node at (10.5,1.5) {\ft $3$};
      \node at (11.5,1.5) {\ft $3$};
      \node at (12.5,1.5) {\ft $4$};
      \node at (13.5,1.5) {\ft $4$};
      \node at (14.5,1.5) {\ft $4$};
     
      \node at (.5,2.5) {\ft $1$};
      \node at (1.5,2.5) {\ft $1$};
      \node at (2.5,2.5) {\ft $1$};
      \node at (3.5,2.5) {\ft $2$};
      \node at (4.5,2.5) {\ft $2$};
      \node at (5.5,2.5) {\ft $2$};
      \node at (6.5,2.5) {\ft $2$};
      \node at (7.5,2.5) {\ft $1$};
      \node at (8.5,2.5) {\ft $1$};
      \node at (9.5,2.5) {\ft $2$};
      \node at (10.5,2.5) {\ft $4$};
      \node at (11.5,2.5) {\ft $4$};
      \node at (12.5,2.5) {\ft $4$};
      \node at (13.5,2.5) {\ft $4$};
      \node at (14.5,2.5) {\ft $4$};
     
      \node at (.5,3.5) {\ft $1$};
      \node at (1.5,3.5) {\ft $1$};
      \node at (2.5,3.5) {\ft $1$};
      \node at (3.5,3.5) {\ft $2$};
      \node at (4.5,3.5) {\ft $2$};
      \node at (5.5,3.5) {\ft $2$};
      \node at (6.5,3.5) {\ft $3$};
      \node at (7.5,3.5) {\ft $3$};
      \node at (8.5,3.5) {\ft $3$};
      \node at (9.5,3.5) {\ft $3$};
      \node at (10.5,3.5) {\ft $2$};
      \node at (11.5,3.5) {\ft $1$};
      \node at (12.5,3.5) {\ft $1$};
      \node at (13.5,3.5) {\ft $2$};
      \node at (14.5,3.5) {\ft $3$};
    \end{scope}
  \end{tikzpicture}
\end{center}
\end{example}

\bigskip

In Proposition \ref{kara}, it is assumed that $\sigma$ is the
signature of a system.  An arbitrary word $\sigma
\in \{x,y,z\}^\star$ must satisfy the parity conditions of Proposition
\ref{parity} in order to be a signature, and this imposes additional structure on the words in 
$W_{_{(ab,cd)}}$ associated to extendable signatures. This structure is described below in Lemma \ref{omega-form}. It will be crucial for the proof of Proposition \ref{kara} as well as for the
proofs of Theorems \ref{UL envelope} and \ref{envelope theorem}. First, some notions need
to be introduced. 

Recall the languages $B_{_{(ab)}}$ and
$B_{_{(cd)}}$ consisting of balanced blocks on letters $\{a,b\}$ and
$\{c,d\}$, respectively (defined in the paragraph preceding
Proposition \ref{bal-def}). Let $\overline{B}_{_{(ab)}}$ and
$\overline{B}_{_{(cd)}}$ denote the subsets consisting of {\em odd}
balanced blocks, meaning balanced blocks of the form $(a^kb^k)$,
$(b^ka^k)$ and $(c^kd^k)$, $(d^kc^k)$ where $k$ is an odd positive
integer. Let $U_{_{(ab,cd)}}$ denote the set of words, $\omega
= \omega_1\cdots \omega_k$, where each $\omega_i \in
\overline{B}_{_{(ab)}} \cup \overline{B}_{_{(cd)}}$. 

\begin{define} \label{well-balance}
A word $\omega\in W_{_{(ab,cd)}}$ is called \df{well-balanced} if
there exists words $\omega_1$ and $\omega_2$ $\in B_{_{(ab)}} \cup
B_{_{(cd)}}$ such that 
\begin{itemize}
\item $\omega = \omega_1 \cdot \omega' = \omega_2 \cdot \omega''$,
\item $\omega_1$ contains an {\em even} number of $a$'s and an {\em
    odd} number of $c$'s,
\item $\omega_2$ contains an {\em odd} number of $a$'s and an {\em
    even} number of $c$'s. 
\end{itemize}
\end{define}

\begin{lemma} \label{omega-form}
Let $\sigma$ be an irreducible signature which satisfies
conditions (1) and (2) of Proposition \ref{kara} where $\sigma$ can be
written as $\sigma = \alpha \cdot \beta \cdot \gamma$, and let $\omega\in
W_{_{(ab,cd)}}$ be the associated word.
\begin{itemize}
\item If $p>0$ and $q=0$, then $\omega$ is of the form
\[(a^pb^p) \cdot \delta\]
where $\delta = \delta_1\delta_2 \cdots$, and each $\delta_i \in
\overline{B}_{_{(cd)}}$  
\item If $p$ and $q$ are both non-zero, then $\omega$ is well-balanced
  and of the form \[\omega  = (a^pb^p) \cdot \delta \cdot (c^qd^q)\] 
where $\delta \in U_{_{(ab,cd)}}$. 
\end{itemize}
\end{lemma}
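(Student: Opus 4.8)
The plan is to extract information about $\omega$ from the parity conditions (Proposition \ref{parity}) applied to the three restrictions $\sigma_{_{\{x,y\}}}$, $\sigma_{_{\{x,z\}}}$, $\sigma_{_{\{y,z\}}}$, which by condition (1) and Lemma \ref{signa-local} are order equivalent to $\omega_{_{\{c,d\}}}$, $\omega_{_{\{a,c\}}}$, $\omega_{_{\{a,b\}}}$ respectively. First I would set up the correspondence: write $\omega = \omega_1 \cdots \omega_k$ as a concatenation of balanced blocks (each from $B_{_{(ab)}}$ or $B_{_{(cd)}}$); the factorization $\sigma = \alpha \cdot \beta \cdot \gamma$ from condition (2) with matrix data $\binom{p\ p\ 0}{q\ 0\ p},\ (0\ q\ q)$ (bottom row) tells me that $\alpha$ has no $z$'s, $\gamma$ has no $x$'s, and $\beta$ has no $y$'s, with the prescribed letter counts. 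Tracing through the construction in the proof of Proposition \ref{3tab-bal-ref}: $\omega_{_{\{a,b\}}} \sim \sigma_{_{\{y,z\}}}$ decomposes according to the positions of $z$'s in $\sigma$, and $\omega_{_{\{c,d\}}} \sim \sigma_{_{\{x,y\}}}$ decomposes according to the positions of $x$'s; the $\{a,b\}$-blocks of $\omega$ come from $\gamma$ (the $y$'s there) and the $\{c,d\}$-blocks from $\alpha$ (the $y$'s there), while the middle factor $\beta$, having counts $(q,0,p)$, contributes the "pure" block $(c^qd^q)$ worth of $c,d$'s and the $(a^pb^p)$ worth of $a,b$'s in a way dictated by the order of $x$'s and $z$'s in $\beta$.

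Next, for the case $p > 0$, $q = 0$: here $\gamma$ is empty (its counts are $(0,0,0)$), so $\omega$ has exactly $p$ $a$'s and $p$ $b$'s, all of which must come from the $\beta$-part, and since $\beta$ then has the form $z^? x^? \cdots$ with only $x$'s and $z$'s and $X(\beta) = p$, $Z(\beta) = p$, the single block $(a^pb^p)$ must appear, and I claim it must appear as a prefix of $\omega$. I would argue this from the parity condition on $\sigma_{_{\{x,z\}}} \sim \omega_{_{\{a,c\}}}$: since $\sigma$ is irreducible and $\alpha$ starts with $x$ (as $\sigma$ is a signature, the first letter is forced by Proposition \ref{parity}(1) — actually the first block is $(xy)$ or $(yx)$...), one analyzes where the $a$'s sit relative to the $c$'s. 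The remaining part $\delta$ consists of $\{c,d\}$-blocks only, and I must show each such block is \emph{odd}. This is exactly where Proposition \ref{parity}(2) enters: between two consecutive occurrences of the "middle" letter pattern, the exponent of a letter in $\sigma$ must have a prescribed parity, and this forces each balanced $(c^kd^k)$ block arising here to have $k$ odd. I expect to phrase this as: the block structure of $\sigma_{_{\{x,y\}}}$ interacts with the single $x$-position coming from $\beta$ and the parity rule of Proposition \ref{parity} to pin down oddness.

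For the case $p, q$ both nonzero: the goal is $\omega = (a^pb^p) \cdot \delta \cdot (c^qd^q)$ with $\delta \in U_{_{(ab,cd)}}$ (all interior blocks odd), plus well-balancedness. The prefix $(a^pb^p)$ and suffix $(c^qd^q)$ come from $\beta$'s pure contributions as before, now using both $X(\beta) = p$ (forcing an $(a^pb^p)$ or $(b^pa^p)$ chunk) and $Z(\beta) = q$ (forcing a $(c^qd^q)$ or $(d^qc^q)$ chunk), and irreducibility plus the parity-forced first and last letters of $\sigma$ fix them as $(a^pb^p)$ at the front and $(c^qd^q)$ at the back (this needs a careful sign/orientation check — possibly invoking the reverse- and $x\leftrightarrow z$-interchange symmetries noted just before Definition \ref{well-balance} to reduce cases). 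The interior blocks of $\delta$ are in bijection with the non-extreme runs of $x$'s and $z$'s in $\sigma$, i.e. with runs of $y$'s in $\alpha$ and $\gamma$ and intermediate runs within $\beta$; Proposition \ref{parity}(2) applied at each such run forces odd exponents, hence odd balanced blocks, hence $\delta \in U_{_{(ab,cd)}}$. Finally, for well-balancedness I would exhibit $\omega_1$ and $\omega_2$ directly: $\omega_1 = (a^pb^p) \cdot \delta_1 \cdots \delta_j$ for the smallest $j$ making the $a$-count even and $c$-count odd, and similarly $\omega_2$; since each $\delta_i$ is an odd block contributing an odd number of either $a$'s or $c$'s, the partial parity counts of $a$'s and $c$'s change with each block, and a counting argument (the total $a$-count $p$ and $c$-count $q$ have some fixed parities, and $\delta$ is nonempty because $\omega \in W_{_{(ab,cd)}}$) guarantees the required prefixes exist.

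The main obstacle I anticipate is the bookkeeping in pinning down the \emph{orientations} of the prefix and suffix blocks — showing it is $(a^pb^p)$ and not $(b^pa^p)$, and $(c^qd^q)$ and not $(d^qc^q)$ — since this requires correlating the leftmost/rightmost letters of $\sigma$ (which Proposition \ref{parity}(1) controls) with how $\phi_n$ orders blocks, and then propagating the parity argument of Proposition \ref{parity}(2) through the entire word to certify oddness of \emph{every} interior block simultaneously. Keeping track of which run of $x$'s or $z$'s in $\sigma$ corresponds to which block of $\omega$, and in which direction, is the delicate part; the invariant symmetries should cut the casework roughly in half.
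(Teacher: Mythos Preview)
Your approach is essentially the same as the paper's: both exploit the explicit form of $\sigma_{_{\{x,y\}}}$, $\sigma_{_{\{y,z\}}}$, and $\sigma_{_{\{x,z\}}}$ given by the factorization $\sigma = \alpha\cdot\beta\cdot\gamma$, together with the parity conditions of Proposition~\ref{parity}, to read off the block structure of $\omega$. The paper obtains the prefix $(a^pb^p)$ directly from $\sigma_{_{\{y,z\}}} = y^p z^p \cdot \gamma_{_{\{y,z\}}}$ and the suffix $(c^qd^q)$ from $\sigma_{_{\{x,y\}}} = \alpha_{_{\{x,y\}}}\cdot x^qy^q$; the interior $\{c,d\}$-blocks are precisely the balanced blocks of $\alpha_{_{\{x,y\}}}$ and the interior $\{a,b\}$-blocks those of $\gamma_{_{\{y,z\}}}$, and their oddness (and alternating orientation) follows from Proposition~\ref{parity} just as you sketch. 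Your bookkeeping of which pieces of $\beta$ contribute what is garbled --- for instance $X(\beta)=q$ and $Z(\beta)=p$, not the reverse, and $\beta$ supplies only half of each extreme block (the $b^p$ in the prefix, the $c^q$ in the suffix) --- but these are correctable details once the restrictions are written out.

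The genuine gap is in the well-balancedness argument. Your proposed ``counting argument'' --- that since each odd block in $\delta$ flips exactly one of the running parities of the $a$-count and the $c$-count, the walk must visit both target states $(0,1)$ and $(1,0)$ --- does not follow from the parity walk alone. If, say, $p$ is even and the walk from $(0,0)$ first passes through an odd number of $\{a,b\}$-blocks and only then through $\{c,d\}$-blocks, it can alternate between $(1,0)$ and $(1,1)$ without ever reaching $(0,1)$; nothing in your argument excludes this. The paper instead performs a case analysis on the parity of $p$: it applies Proposition~\ref{parity}(2) at the boundary between $\alpha$ and $\beta$ to determine whether $\beta$ begins as $z^m x\cdots$ or $x^m z\cdots$ and to fix the parity of $m$, and then invokes \emph{irreducibility} to force $m<p$ (preventing the walk from returning to its start parity too early). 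Only with this structural input can one locate the specific prefixes $\omega_1$ and $\omega_2$. You mention irreducibility when fixing the orientations of the extreme blocks, but it is here --- in the well-balancedness step --- that it is actually doing the essential work.
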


\begin{remark}
  Notice that up to reversal of $\sigma$ and interchanging $x$'s and
  $z$'s, Lemma \ref{omega-form} covers all 
  possible irreducible signatures satisfying the conditions of
  Proposition \ref{kara}. 
\end{remark}

\begin{proof}[Proof of Lemma \ref{omega-form}]
  We first consider the case when $p>0$ and $q=0$. This
  means that $\sigma$ is of the form
  \[\sigma = \alpha \cdot z^p,\]
  where $X(\alpha) = Y(\alpha) = p$ and $Z(\alpha) = 0$. It follows that 
  $\sigma_{_{\{y,z\}}} = y^pz^p$ and $\sigma_{_{\{x,z\}}} = x^pz^p$. By
condition {\em (1)} of Proposition \ref{kara}, $\sigma_{_{\{x,y\}}}$ factors into
balanced blocks. By Proposition \ref{parity}, there are {\em odd} positive
integers $p_i$ with $\sum p_i = p$ such that 
\[\sigma_{_{\{x,y\}}} = (x^{p_1}y^{p_1})(y^{p_2}x^{p_2})\cdots\]
where the blocks of $\sigma_{_{\{x,y\}}}$ appear in an alternating
pattern, meaning that we have $(x^{p_i}y^{p_i})$ for odd $i$, and
$(y^{p_i}x^{p_i})$ for even $i$. Therefore we have
\[\omega = (a^pb^p)(c^{p_1}d^{p_1})(d^{p_2}c^{p_2})\cdots\]
which is what we wanted to show. 

Now suppose both $p$ and $q$ are positive. The assumption that $\sigma =
\alpha \cdot \beta \cdot \gamma$ implies that $\sigma$ starts
with the letter $x$ and ends with the letter $z$. It follows that the
initial balanced block of $\sigma_{_{\{y,z\}}}$ equals $(y^pz^p)$ and
the last balanced block of $\sigma_{_{\{x,y\}}}$ equals
$(x^qy^q)$. This implies that $\omega$ is of the form 
\[\omega = (a^pb^p) \cdot \delta \cdot (c^qd^q)\]
where $\delta \in W_{_{(ab,cd)}}$. We still need to argue that
$\delta$ is comprised of odd blocks, that is, $\delta
\in U_{_{(ab,cd)}}$. By applying
Proposition \ref{parity}, as before, it follows that $\alpha$ can be
written as 
\[\alpha = (x^{p_1}y^{p_1})(y^{p_2}x^{p_2})\cdots\]
where the $p_i$ are odd positive integers with $\sum p_i = p$ and the
blocks appear in an 
alternating pattern. By symmetry, the same argument shows that
$\gamma$ can be written as 
\[\gamma  = \cdots (z^{q_2}y^{q_2})(y^{q_1}z^{q_1})\] where the $q_i$
are odd positive integers with $\sum q_i  = q$ and the blocks appear
in an alternating pattern. This implies that $\sigma_{_{\{x,y\}}}$ and
$\sigma_{_{\{y,z\}}}$ can be written as
\[\sigma_{_{\{x,y\}}} = (x^{p_1}y^{p_1})(y^{p_2}x^{p_2})\cdots
(x^qy^q) \; \mbox{ and } \; \sigma_{_{\{y,z\}}} = (y^pz^p) \cdots
(z^{q_2}y^{q_2})(y^{q_1}z^{q_1}).\] 
Since $\sigma_{_{\{x,y\}}} \sim \omega_{_{\{c,d\}}}$ and
$\sigma_{_{\{y,z\}}} \sim \omega_{_{\{a,b\}}}$, this proves that
$\delta\in U_{_{(ab,cd)}}$. 
It remains to prove that $\omega$ is well-balanced.

Consider the case when $p$ is odd. The block $(a^pb^p)\in
B_{_{(ab)}}$ is an initial substring of $\omega$ which contains an odd
number of $a$'s and an even number of $c$'s. Next, notice that the
last block of $\alpha$ must be $(x^{p_j}y^{p_j})$ for some odd number
$j$. Since $p_j$ is odd, 
Proposition \ref{parity} implies that the first occurrence of the
letter $x$ in $\beta$ must be preceded by an {\em odd} number of $z$'s.
Thus $\beta$ can be written as
\[\beta = z^mx\cdots\]
where $m$ is {\em odd} and strictly less than $p$, or else $\sigma$ is not
irreducible. Therefore the restriction $\sigma_{_{\{x,z\}}}$ can be
written as
\[\sigma_{_{\{x,z\}}} = x^pz^mx\cdots\] 
which implies that there is an initial substring of $\omega$, 
\[(a^pb^p)\cdot \delta_1 \cdot \delta_2 \cdots \delta_{2k-1}
\cdot \delta_{2k}\]
where $\delta_1, \dots, \delta_{2k-1} \in
\overline{B}_{_{(cd)}}$ and $\delta_{2k}\in \overline{B}_{_{(ab)}}$.
This initial substring has an even number of $a$'s and precisely $m$
$c$'s, and therefore $\omega$ is well-balanced.

Now consider the case when $p$ is even. Then the last block of
$\alpha$ must be of the form $(y^{p_i}x^{p_i})$. Since $p_i$ is odd,
Proposition \ref{parity} implies that the first occurrence of the
letter $z$ in $\beta$ must be preceded by an {\em even} number of
$x$'s. Thus $\beta$ can be written as \[\beta = x^mz\cdots\] where $m$ is
an {\em even} non-negative integer. We now distinguish the cases
whether $m=0$ or not. 

If $m$ is positive, then the restriction of
$\sigma_{_{\{x,z\}}}$ can be written as
\[\sigma_{_{\{x,z\}}} = x^{p+m}z\cdots\]
which implies that there is an initial substring of $\omega$, 
\[(a^pb^p)\cdot \delta_1\cdots \delta_{2k}\cdot \delta_{2k+1}\]
where $\delta_1,\dots, \delta_{2k}\in \overline{B}_{_{(ab)}}$ and
$\delta_{2k+1}\in \overline{B}_{_{(cd)}}$. This initial substring
contains an even number of $a$'s and an odd number of
$c$'s. Furthermore, the initial substring \[(a^pb^p)\cdot \delta_1\]
contains an odd number of $a$'s and an even number of $c$'s, and
therefore $\omega$ is well balanced.

Finally, suppose that $m=0$. Then Proposition \ref{parity} implies
that $\beta$ can be written as \[\beta = z^kx\cdots\]
where $k$ is a positive {\em even} integer which is strictly less than
$p$, or else $\sigma$ is not irreducible. Therefore the restriction
$\sigma_{_{\{x,z\}}}$ can be written as
\[\sigma_{_{\{x,z\}}} = x^pz^kx\cdots\]
which implies that there is an initial substring of $\omega$,
\[(a^pb^p)\cdot \delta_1 \cdots \delta_{2j} \cdot \delta_{2j+1}\]
where $\delta_1, \dots, \delta_{2j} \in
\overline{B}_{_{(cd)}}$, and $\delta_{2j+1}\in
\overline{B}_{_{(ab)}}$. This initial substring contains an even
number of $c$'s and an odd number of $a$'s. Furthermore, the initial
substring
\[(a^pb^p)\cdot \delta_1\] contains an even number of $a$'s and an odd
number of $c$'s, and therefore $\omega$ is well-balanced. \end{proof}

\begin{proof}[Proof of Proposition \ref{kara}]
We first show the sufficiency of the conditions of Proposition
\ref{kara}. By Lemma \ref{omega-form} we may assume that the
associated $\omega\in W_{_{(ab,cd)}}$ can be written as $\omega =
(a^pb^p) \cdot \delta \cdot (c^qd^q)$ where $\delta \in
U_{_{(ab,cd)}}$. Notice that there are $q$ $a$'s and $p$ $c$'s in
$\delta$, and that the first case of Lemma
\ref{omega-form} is obtained by setting $q = 0$. We will show that for any $n\geq
4$, the tableau $T_n^\omega$ has a valid matching. By Proposition
\ref{val-match} this will imply that $T_n^\omega$ is geometric and
therefore that $\sigma$ is extendable. 

The tableau $T_n^\omega$ comes with a very particular
structure. Consider its rows which are given by
 \[T_n^\omega(i) = \phi_{_{i,n}}(\omega) = \phi_{_{i,n}}(a^pb^p) \cdot
 \phi_{_{i,n}}(\delta) \cdot \phi_{_{i,n}}(c^qd^q).\] 
Notice that the term $\phi_{_{i,n}}(\delta)$ has length $(n-i)p +
(i-1)q$ with precisely $p$ entries of each number in $[n]_i^+$ and
precisely $q$ entries of each number in $[n]_i^-$. If we view the
tableau by its geometric representation we see that we can decompose
into parts as indicated in the figure below.

\medskip

\begin{center}
\begin{tikzpicture}[xscale=.183 , yscale=.4]

\fill[blue, opacity = .15] 
(0,1) --++ (0,9) --++ (4,0) --++ (0,-9) -- cycle;
\draw (0,5.5) node[right]{\tiny $U_1$}  
(0,1) --++ (0,9) --++ (4,0) --++ (0,-9) -- cycle;

\fill[blue, opacity = .15] 
(4,2) --++ (0,8) --++ (4,0) --++ (0,-8) -- cycle;
\draw (4,6) node[right]{\tiny $U_2$}
(4,2) --++ (0,8) --++ (4,0) --++ (0,-8) -- cycle;

\fill[blue, opacity = .15] 
(8,3) --++ (0,7) --++ (4,0) --++ (0,-7) -- cycle;
\draw (8,6.5) node[right]{\tiny $U_3$}
(8,3) --++ (0,7) --++ (4,0) --++ (0,-7) -- cycle;

\fill[blue!20!white, opacity = .3] 
(12,4) --++ (0,6) --++ (4,0) --++ (0,-6) -- cycle;
\draw[dotted] (12,4) --++ (4,0) --++ (0,1) --++ (4,0) --++(0,1) --++ (4,0); 

\fill[blue!20!white, opacity = .3] 
(16,5) --++ (0,5) --++ (4,0) --++ (0,-5) -- cycle;
\draw[dotted] (16,5) ;

\fill[blue!20!white, opacity = .3] 
(20,6) --++ (0,4) --++ (4,0) --++ (0,-4) -- cycle;
\draw[dotted] (20,6) ;

\fill[blue, opacity = .15] 
(24,7) --++ (0,3) --++ (4,0) --++ (0,-3) -- cycle;
\draw (23.43,8.5) node[right]{\tiny $U_{n-3}$}
(24,7) --++ (0,3) --++ (4,0) --++ (0,-3) -- cycle;

\fill[blue, opacity = .15] 
(28,8) --++ (0,2) --++ (4,0) --++ (0,-2) -- cycle;
\draw (27.43,9) node[right]{\tiny $U_{n-2}$}
(28,8) --++ (0,2) --++ (4,0) --++ (0,-2) -- cycle;

\fill[blue, opacity = .15] 
(32,9) --++ (0,1) --++ (4,0) --++ (0,-1) -- cycle;
\draw (31.43,9.5) node[right]{\tiny $U_{n-1}$}
(32,9) --++ (0,1) --++ (4,0) --++ (0,-1) -- cycle;

\draw (17,0.5) node[right]{\tiny $M_{1}$}
(0,0) --++ (0,1) --++ (36,0) --++ (0,-1) -- cycle;

\draw (21,1.5) node[right]{\tiny $M_{2}$}
(4,1) --++ (0,1) --++ (37,0) --++ (0,-1) -- cycle;

\draw (25,2.5) node[right]{\tiny $M_{3}$}
(8,2) --++ (0,1) --++ (38,0) --++ (0,-1) -- cycle;

\draw (41,6.5) node[right]{\tiny $M_{n-3}$}
(24,6) --++ (0,1) --++ (42,0) --++ (0,-1) -- cycle;

\draw (45,7.5) node[right]{\tiny $M_{n-2}$}
(28,7) --++ (0,1) --++ (43,0) --++ (0,-1) -- cycle;

\draw (49,8.5) node[right]{\tiny $M_{n-1}$}
(32,8) --++ (0,1) --++ (44,0) --++ (0,-1) -- cycle;

\draw (53,9.5) node[right]{\tiny $M_{n}$}
(36,9) --++ (0,1) --++ (45,0) --++ (0,-1) -- cycle;

\fill[blue, opacity = .15] 
(36,0) --++ (0,1) --++ (5,0) --++ (0,-1) -- cycle;
\draw (36.5,0.5) node[right]{\tiny $L_{2}$}
(36,0) --++ (0,1) --++ (5,0) --++ (0,-1) -- cycle;

\fill[blue, opacity = .15] 
(41,0) --++ (0,2) --++ (5,0) --++ (0,-2) -- cycle;
\draw (41.5,1) node[right]{\tiny $L_{3}$}
(41,0) --++ (0,2) --++ (5,0) --++ (0,-2) -- cycle;

\fill[blue!20!white, opacity = .3] 
(46,0) --++ (0,3) --++ (5,0) --++ (0,-3) -- cycle;
\draw[dotted] (46,3) --++(5,0) --++ (0,1) --++ (5,0) --++ (0,1) --++ (5,0);

\fill[blue!20!white, opacity = .3] 
(51,0) --++ (0,4) --++ (5,0) --++ (0,-4) -- cycle;
\draw[dotted] (51,0) ;

\fill[blue!20!white, opacity = .3] 
(56,0) --++ (0,5) --++ (5,0) --++ (0,-5) -- cycle;
\draw[dotted](56,0) ;

\fill[blue, opacity = .15] 
(61,0) --++ (0,6) --++ (5,0) --++ (0,-6) -- cycle;
\draw (61,3) node[right]{\tiny $L_{n-3}$}
(61,0) --++ (0,6) --++ (5,0) --++ (0,-6) -- cycle;

\fill[blue, opacity = .15] 
(66,0) --++ (0,7) --++ (5,0) --++ (0,-7) -- cycle;
\draw (66,3.5) node[right]{\tiny $L_{n-2}$}
(66,0) --++ (0,7) --++ (5,0) --++ (0,-7) -- cycle;

\fill[blue, opacity = .15] 
(71,0) --++ (0,8) --++ (5,0) --++ (0,-8) -- cycle;
\draw (71,4) node[right]{\tiny $L_{n-1}$}
(71,0) --++ (0,8) --++ (5,0) --++ (0,-8) -- cycle;

\fill[blue, opacity = .15] 
(76,0) --++ (0,9) --++ (5,0) --++ (0,-9) -- cycle;
\draw (76.5,4.5) node[right]{\tiny $L_{n}$}
(76,0) --++ (0,9) --++ (5,0) --++ (0,-9) -- cycle;

\draw (0,0) --++ (0,10) --++ (81,0) --++ (0,-10) -- cycle ;

\end{tikzpicture}
\end{center}

For $1\leq i \leq n-1$, box $U_i$ has height $n-i$ and width $p$ and all
its entries equal $i$. For $1\leq i \leq n$, box $M_i$ is the part of
row $i$ and the entries are given by $\phi_{_{i,n}}(\delta)$. For $1<
i \leq n$, box $L_i$ has height $i-1$ and width $q$ and all its
entries equal $i$. 

It is now easy to see that $T^\omega_n$ has a valid matching. Each
entry in $M_1$ is weakly adjacent to a corresponding entry in
$U_1$, and therefore Algorithm \ref{algomatch} will delete all entries
in $U_1\cup M_1$. Once these are deleted, we see that
the each entry in $M_2$ is weakly adjacent to an entry in $U_2\cup
L_2$, and therefore Algorithm \ref{algomatch} will delete all entries
of $U_2\cup M_2 \cup L_2$. In general,
for $i<n$, each entry in $M_i$ will be weakly adjacent to an entry in
$U_i\cup L_i$, and Algorithm \ref{algomatch} will delete all entries
of $U_i\cup M_i\cup L_i$. Finally, each entry in $M_n$ will be weakly
adjacent to 
an entry in $L_n$. Notice that in the case when $q=0$, then $M_n$ and
every $L_i$ is empty, and each entry in $M_i$ will be weakly adjacent to a
member in $U_i$. This shows that $T^\omega_n$ has a valid matching.

\bigskip

We now establish the necessity of condition {\em (1)}. Suppose
$\sigma$ is an extendable signature. Let $S$ be a regular system of
size $4$ with signature $\sigma$, and let $T$ be the associated
tableau. By Proposition \ref{system-tableau}, $T$ is regular, and
by Proposition \ref{tab-corr} there exists an 
$\omega \in W_{_{(ab,cd)}}$ such that $T = T^{\omega}_{4}$. Clearly
$T^{\omega}_{3}$ is the tableau associated to a system with signature
$\sigma$. 

We are ready to show the necessity of condition {\em (2)}. Suppose
$\sigma$ is irreducible and extendable. Therefore condition {\em (1)} holds, and $\sigma$ satisfies the refinement conditions stated in Remark \ref{condition1}.
The basic strategy is to factor $\sigma$ into maximal parts
each consisting of only two of the three letters, and use the refinement
conditions to control the distribution of the letters. Throughout we also
use the parity conditions from Proposition \ref{parity}. Let 
\[ \begin{array}{ccc}
\langle \sigma_{_{\{x,y\}}} \rangle = (a_1, a_2, \dots) & , & \langle
\sigma_{_{\{y,z\}}} 
\rangle =  (c_1, c_2, \dots)\\
\exp_z(\sigma_{_{\{x,z\}}}) = (m_1, m_2, \dots) & , &
\exp_x(\sigma_{_{\{x,z\}}}) = (n_1, 
n_2, \dots)\\  
\end{array}\]
and write $\sigma_{_{\{x,y\}}}$ and $\sigma_{_{\{y,z\}}}$ as
\[\sigma_{_{\{x,y\}}} = \alpha_1 \alpha_2 \cdots  \;\; , \;\; \sigma_{_{\{y,z\}}} =
\gamma_1 \gamma_2 \cdots \]
where $\alpha_i \in B_{_{(xy)}}$ and $\gamma_i\in B_{_{(yz)}}$.
Up to symmetry we may assume that $\sigma$ starts with an $x$, and
therefore $\alpha_1 = (x^{a_1}y^{a_1})$. Let $\alpha$ be the set of blocks of
$\sigma_{_{\{x,y\}}}$ which appear in $\sigma$ before the first $z$. We split
into cases depending on whether $\alpha$ is empty or not. 

{\em Case: $\alpha = \emptyset$.} Note that this implies $0< n_1 \leq
a_1$, and we claim that $\sigma =  x^q \cdot \sigma_{_{\{y,z\}}}$. If $n_1 <
a_1$, then $\sigma$ can be written as 
\[\sigma  = x^{n_1}z^{m_1}x\cdots\]
and consequently $n_1 < a_1 \leq m_1 \leq c_1$, which contradicts the
assumption that $\langle \sigma_{_{\{y,z\}}} \rangle$ is a refinement of
$\exp_x(\sigma_{_{\{x,z\}}})$. 
Therefore $n_1 = a_1
\leq m_1$, and since $c_1 + c_2 + \cdots + c_j = n_1$ for some $j \geq
1$, we conclude that $\sigma$ can be written as \[\sigma = x^{n_1}
\cdot (\gamma_1  \gamma_2 \cdots \gamma_j)\] 

{\em Case: $\alpha \neq \emptyset$.} Let $\alpha = \alpha_1 \alpha_2
\cdots \alpha_j$. We
note $a_i$ is odd for all $1\leq i \leq j$, that $\alpha_i =
(x^{a_i}y^{a_i})$ for odd $i \leq j$, and $\alpha_i = 
(y^{a_i}x^{a_1})$ for even $i \leq j$. We now split further into
subcases depending on whether $j$ is even or odd.

{\em Subcase: $j$ odd.} We claim that $\alpha_j$ is directly followed by
$z$. If not, then $\alpha_{j+1} = (y^{a_{j+1}}x^{a_{j+1}})$ and the first
$z$ occurs within this block, but this implies $n_1 < c_1$,
contradicting the assumption that $\langle \sigma_{_{\{y,z\}}}
\rangle$ is a refinement of $\exp_x(\sigma_{_{\{x,z\}}})$. Therefore $c_1 = n_1$
and $\sigma$ 
can be written as \[\sigma  = \alpha_1  \alpha_2 \cdots \alpha_j z^c
\cdots\] If $c$ is 
even, then $z^c$ is followed by $y$, implying that 
$c > n_1$ and therefore $\sigma$ can be written as 
\[\sigma = \alpha_1 \alpha_2 \cdots \alpha_j  z^{n_1}  z \cdots \]
contradicting the assumption that $\sigma$ is
irreducible. Consequently, $c = m_1$ is odd and $m_1 \leq n_1$. If
$m_1 = n_1$ then $\sigma$ can be written as
\[\sigma = \alpha_1 \alpha_2 \cdots \alpha_j z^{n_1}\]
and we are done. Therefore assume that $m_1 < n_1$ which implies
$\alpha_{j+1} = (x^{a_{j+1}}y^{a_{j+1}})$. For some $i>1$ we have $m_1 
+ m_2 + \cdots + m_i = n_1$, where $n_1$ and $m_1$
are odd, $m_2, \dots, m_{i}$ are even. Therefore, $a_{j+1} = n_2 +
n_3 + \cdots + n_{i+1} \leq m_{i+1}$ 
and there is a $k\geq i+1$ for which the sequence $(c_2, c_3,
\dots, c_k)$ is a refinement of 
$(n_2, n_3, \dots, n_{i+1})$. It then follows that $\sigma$ can be written
as
\[\sigma = (\alpha_1 \alpha_2 \cdots \alpha_j) (z^{m_1}  x^{n_2} 
z^{m_2} x^{n_3} \cdots z^{m_i} x^{n_{i+1}}) (\gamma_2 \gamma_3
\cdots \gamma_k)\]

{\em Subcase: $j$ even.} Similar to above, we exclude the case that
$\sigma = \alpha_1 \alpha_2 \cdots \alpha_j z^{n_1}$ and we are left
with the case that $\alpha_{j+1} = (x^{a_{j+1}}y^{a_{}j+1})$ and we
must consider two further subcases depending on whether the initial
$z$ is preceded by $x$ or $y$.

{\em Subsubcase: $z$ preceded by $x$.} This is similar to the case
when $j$ is odd. For some $i\geq 1$, we have $m_1+m_2 +
\cdots + m_i = n_1$, where each $z^{m_s}$ appears before the initial
$y$ in $\alpha_{j+1}$. This induces a partition of the string of $x$'s
in $\alpha_{j+1}$ giving us the exponent sequence $(n_2, n_3, \dots,
n_{i'})$ where $i\leq i' \leq i+1$ and $a_{j+1} = n_2 + n_3 + \cdots +
n_{i'} \leq m_{i+1}$. Moreover there is a $k\geq i'$ such that $(c_2,
c_3, \dots, c_k)$ is a refinement of $(n_2, n_3, \dots, n_{i'})$ which
implies that we can write $\sigma$ as \[\sigma = (\alpha_1 \alpha_2
\cdots \alpha_j) (x^dz^{m_1}x^{n_2}z^{m_2} \cdots x^{n_i}z^{m_i}x^{e})
(\gamma_2 \gamma_3 \cdots \gamma_k)\] where $d\geq 0$ is even, and
$e=0$ with $i'=i$ or $0< e = n_{i'}$ with $i'=i+1$.

{\em Subsubcase: $z$ preceded by $y$.} This case is 
different than the preceding ones, and cannot be dealt by only using
the refinement conditions. For instance, the signature in Example
\ref{special} belongs to this case.
We may assume $\alpha_{j+1} = (x^{a_{j+1}}y^{a_{j+1}})$ with $a_{j+1}$ odd
and $n_1 = a_1 + a_2 + \cdots + a_{j+1}$. We can write 
$\sigma$ as \[\sigma = (\alpha_1 \alpha_2 \cdots \alpha_j) \cdot
(x^{a_{j+1}} y^b z^c y \cdots)\]
where $b$ is odd with $0<b<a_{j+1}$ and $c>0$ is even. Set $a = a_1 +
\cdots +a_j$. 
Now we have $c_1= a + b < c < n_1 = a + a_{j+1}$, 
\[ \begin{array}{rclcrcl}
\langle \sigma_{_{\{x,y\}}} \rangle & = & (a_1, a_2, \dots) & , & 
\langle \sigma_{_{\{y,z\}}}\rangle & = & (c_1, c - c_1, \dots)\\
\exp_z(\sigma_{_{\{x,z\}}}) & = & (c_1+ d, m_2, \dots) & , &
\exp_x(\sigma_y) & = & (a+a_{j+1}, 
n_2, \dots).\\  
\end{array}\]
From this we can get the following partial structure on $\omega$.
\[\omega = (a^{c_1}b^{c_1})(b^{c-c_1}a^{c-c_1}) \cdots
(c^{a_1}d^{a_1})(d^{a_2}c^{a_2})\cdots (d^{a_j}c^{a_j})
(c^{a_{j+1}}d^{a_{j+1}}) \cdots.\] 
This partial information is sufficient to
show that the tableau $T_4^{\omega}$ does not have a valid
matching. Since $T_4^\omega(i) = \phi_{_{i,4}}(\omega)$, we get

\[
\begin{array}{lcl@{\vspace{1.2ex}}}
\text{ $T_4^{\omega}(4)$} & = &
\underbrace{1\dots \dots \dots 1}_{c_1}\ \underbrace{2 \dots \dots \dots 2}_{c_1}\
\underbrace{3 \dots \dots \dots 3}_{c_1}\ \cdots  \\
\text{ $T_4^\omega(3)$ } & = &
\underbrace{1\dots \dots \dots 1}_{c_1}\ \underbrace{2 \dots \dots
  \dots 2}_{c_1}\ \cdots \\
\text{ $T_4^\omega(2)$} & = &
\underbrace{1\dots \dots \dots \dots 1}_{n_1}\ \underbrace{3 \dots 3}_{a_1}\
\underbrace{4 \dots 4}_{a_1}\ \cdots \\
\text{ $T_4^\omega(1)$} & = &
 \underbrace{2 \dots 2}_{a_1}\ \underbrace{3 \dots 3}_{a_1}\
\underbrace{4\dots 4}_{a_1}\
 \underbrace{4 \dots 4}_{a_2}\ \underbrace{3 \dots 3}_{a_2}\
\underbrace{2\dots 2}_{a_2}\ \cdots\cdots\
\underbrace{4 \dots 4}_{a_{j}}\ \underbrace{3 \dots 3}_{a_{j}}\
\underbrace{2\dots 2}_{a_{j}}\ 
\underbrace{2 \dots 2}_{a_{j+1}}\ \underbrace{3 \dots 3}_{a_{j+1}}\
\underbrace{4\dots 4}_{a_{j+1}}\ \cdots .
\end{array}
\]

We now apply Algorithm \ref{algomatch} to $T_4^\omega$ and identify
the positions of the weakly adjacent pairs that get deleted. Recall
that the algorithm always deletes leftmost entries.
\begin{itemize}
\item The first $3a$ steps deletes $3a$ entries from $T_4^\omega(1)$
together with $a$ entries from $T_4^\omega(i)$ for $i = 1,2,3$. 
\item The next $a_{j+1}$ steps
deletes $a_{j+1}$ entries from $T_4^\omega(1)$ together $a_{j+1}$
entries from $T_4^\omega(2)$. 
\item The next 
$b$ steps deletes $b$ entries from $T_4^\omega(1)$
 together with $b$ entries from $T_3^\omega(3)$.
\item The next $a_1$ steps deletes $a_1$ entries from $T_4^\omega(2)$
  together with $a_1$ entries from $T_4^\omega(3)$.
\end{itemize}

At this point there is no weakly adjacent pairs, so Algorithm
\ref{algomatch} will output a non-empty tableau, and consequently
$T_4^\omega$ is not geometric and $\sigma$ is not extendable.
This completes the proof of the necessity of condition {\em (2)}.
\end{proof}

\section{Proof of Theorems \ref{UL envelope} and 
\ref{envelope theorem}} \label{evlps}

Let $\sigma$ be an extendable irreducible signature with $X(\sigma) =
Y(\sigma) = Z(\sigma) = r = p+q$ where $p$ and $q$ are the numbers
from Proposition \ref{kara}. We distinguish
the following types:

\begin{itemize}
\item $\sigma$ is called \df{odd} if $r$ is odd and $p=0$ or $q=0$.
\item $\sigma$ is called \df{even} if $r$ is even and $p=0$ or $q=0$.
\item $\sigma$ is called \df{mixed} if $p>0$ and $q>0$.
\end{itemize}

\begin{example}
  
Consider the signature $\sigma = xyz$. This is an irreducible
extendable signature of the form $\sigma = \alpha \cdot \beta \cdot
\gamma$, where $\alpha = xy$, $\beta = z$, and $\gamma =
\emptyset$. Thus $p = 1$ and $q=0$. Note that this is not unique,
since it also satisfies $p=0$ and 
$q=1$, where $\beta = x$ and $\gamma = yz$. In either case, $\sigma$
is an {\em odd} signature and the associated word is $\omega =
(ab)(cd)$. For $n=4$ we get the following regular system. 

\begin{center}
\begin{tikzpicture}[scale=.3]
\begin{scope}[xscale=.93]
\draw[blue!60!black!30!cyan](0,4)
\ls\ls\ds\ld\ld\de; 
\draw[blue!70!black!50!cyan](0,3)
\ls\ds\ld\de\us\ue;
\draw[blue!80!black!70!cyan](0,2)
\ds\de\us\lu\ue\ls;
\draw[blue!90!black!90!cyan](0,1)
\us\lu\lu\ue\ls\ls;
\end{scope}
\end{tikzpicture}
\end{center}

Notice that the system is lower convex and that the upper envelope
contains only paths $1$ and $4$.

\bigskip

Now consider the signature $\sigma = xy^2xz^2$. This is an irreducible
extendable signature of the form $\sigma = \alpha \cdot \beta \cdot
\gamma$, where $\alpha = (xy)(yx)$, $\beta = z^2$, and $\gamma =
\emptyset$. Thus $p = 2$ and $q=0$, and $\sigma$
is an {\em even} signature with associated word $\omega =
(a^2b^2)(cd)(dc)$. For $n=4$ we get the following regular system. 

\begin{center}
\begin{tikzpicture}[scale=.3]
\begin{scope}[xscale=-.93, yscale = -1]
\draw[blue!60!black!30!cyan](0,1)
\us\ud\de\us\ud\de\ls\ls\us\ud\de\ls\ls; 
\draw[blue!70!black!50!cyan](0,2)
\ds\du\lu\ue\ls\ds\de\us\ue\ls\ds\de\ls;
\draw[blue!80!black!70!cyan](0,3)
\ls\ls\ds\ld\du\lu\lu\ue\ls\ls\ls\ds\de;
\draw[blue!90!black!90!cyan](0,4)
\ls\ls\ls\ls\ls\ls\ds\ld\ld\du\lu\lu\ue;
\end{scope}
\end{tikzpicture}
\end{center}

Notice that the system is upper convex and that the lower envelope
contains only paths $1$ and $2$.

\bigskip

Finally consider the signature $\sigma = xy^2x^3z^3y^2z$. This is an irreducible
extendable signature of the form $\sigma = \alpha \cdot \beta \cdot
\gamma$, where $\alpha = (xy)(yx)$, $\beta = x^2z^2$, and $\gamma =
(zy)(yz)$. Thus $p = 2$ and $q=2$, and $\sigma$
is a {\em mixed} signature with associated word $\omega =
(a^2b^2)(ba)(ab)(cd)(dc)(c^2d^2)$. For $n=4$ we get the following
regular system. 

\begin{center}
\begin{tikzpicture}[scale=.3]
\begin{scope}[xscale=.93]
\draw[blue!90!black!90!cyan](0,4)
\ls\ls\ds\du\ue\ls\ls\ls\ls\ds\du\ue\ls\ls\ls\ls\ds\du\ud\ld\ld\du\lu\lu\ue;
\draw[blue!80!black!70!cyan](0,3)
\ls\ds\de\ls\us\ue\ls\ls\ds\de\ls\us\ud\ld\du\lu\lu\ud\du\ue\ls\ls\ls\ds\de;
\draw[blue!70!black!50!cyan](0,2)
\ds\de\ls\ls\ls\us\ud\du\lu\lu\ud\ld\du\ue\ls\ds\de\ls\ls\us\ue\ls\ds\de\ls;
\draw[blue!60!black!30!cyan](0,1)
\us\lu\lu\ud\ld\ld\du\ud\de\ls\ls\ls\ls\us\ud\de\ls\ls\ls\ls\us\ud\de\ls\ls; 
\end{scope}
\end{tikzpicture}
\end{center}

Notice that this system is upper convex and lower convex.
\end{example}

\bigskip

\begin{lemma}\label{parity-cases}
  Let $S$ be a regular system labeled by $[n]$ with irreducible
  signature $\sigma$.
  \begin{enumerate}
  \item If $\sigma$ is odd, then one of the envelopes of $S$ contains
    only paths $1$ and $n$ while the other envelope contains every
    path of $S$.
  \item If $\sigma$ is even, then either $S$ is lower convex and the
    upper envelope of $S$ contains only paths $n-1$ and $n$, or $S$ is upper
    convex and the lower envelope of $S$ contains
    only paths $1$ and $2$. 
  \item If $\sigma$ is mixed, then $S$ is upper convex and lower convex.
  \end{enumerate}
\end{lemma}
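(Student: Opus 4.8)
The plan is to read each envelope of $S$ off the associated word $\omega\in W_{(ab,cd)}$ of $\sigma$. By Proposition~\ref{kara} the tableau associated to $S$ is $T^\omega_n$, whose rows are $T^\omega_n(i)=\phi_{i,n}(\omega)$, and by Lemma~\ref{local-envelope} membership of path $i$ in an envelope is simply a parity condition on the counts in some prefix of $T^\omega_n(i)$. First I would normalize $\sigma$: all three conclusions are invariant under the two symmetries of Proposition~\ref{kara} --- reversal of $\sigma$ is a horizontal reflection, which fixes each envelope and either fixes the labeling or sends $i\mapsto n{+}1{-}i$ according to the parity of $r$, while interchanging the $x$'s and $z$'s is a vertical reflection, which swaps the two envelopes and sends $i\mapsto n{+}1{-}i$ --- and under these maps $\{1,n\}$ is fixed and, when $r$ is even, $\{1,2\}$ and $\{n{-}1,n\}$ are interchanged. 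So we may assume $\sigma=\alpha\cdot\beta\cdot\gamma$, and in the odd and even cases we may further assume $q=0$; then Lemma~\ref{omega-form} applies in the stated form. The computation I will use repeatedly is that $\phi_{i,n}$ sends a block of $B_{(ab)}$ of size $r$ to a rearrangement of $(1^r,\dots,(i{-}1)^r)$ and a block of $B_{(cd)}$ of size $r$ to a rearrangement of $((i{+}1)^r,\dots,n^r)$; hence at the end of $\phi_{i,n}(\omega')$, for any prefix $\omega'$ of $\omega$ that is a concatenation of whole balanced blocks, each element of $A_i^-$ has count equal to the number of $a$'s in $\omega'$ and each element of $A_i^+$ has count equal to the number of $c$'s in $\omega'$.

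For the cases $q=0$ we have $r=p$ and $\omega=(a^pb^p)\cdot\delta$ with $\delta=\delta_1\delta_2\cdots$ nonempty and each $\delta_j\in\overline{B}_{(cd)}$ of odd size $r_j$. The prefix $\phi_{i,n}(a^pb^p)$ of $T^\omega_n(i)$ leaves every element of $A_i^-$ at count $p$ and every element of $A_i^+$ at count $0$. When $p$ is odd ($\sigma$ odd), this already shows every path lies on the lower envelope, so $S$ is lower convex; for the upper envelope, path $n$ works via the empty prefix and path $1$ via $\phi_{1,n}(\delta_1)$ (which leaves each element of $A_1^+$ at the odd count $r_1$), whereas for $1<i<n$ no prefix works, since one ending inside $\phi_{i,n}(a^pb^p)$ leaves the nonempty set $A_i^+$ at count $0$ and one extending past it leaves the nonempty set $A_i^-$ at the odd count $p$; thus the upper envelope is exactly $\{1,n\}$, proving~(1). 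When $p$ is even ($\sigma$ even, so $p\ge 2$ since $r>0$), the prefix $\phi_{i,n}(a^pb^p)\cdot\phi_{i,n}(\delta_1)$ (one of whose two factors is empty when $i=1$ or $i=n$) leaves $A_i^-$ at the even count $p$ and $A_i^+$ at the odd count $r_1$, so $S$ is upper convex; and path $1$ (empty prefix) and path $2$ (prefix of length one) lie on the lower envelope, while for $i\ge 3$ any prefix ending among the $1$'s of $\phi_{i,n}(a^pb^p)$ leaves $2\in A_i^-$ at count $0$ and any later prefix leaves $1\in A_i^-$ at the even count $p$, so path $i$ is not on the lower envelope; thus the lower envelope is exactly $\{1,2\}$, proving~(2).

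For the mixed case ($p,q>0$), Lemma~\ref{omega-form} tells us $\omega=(a^pb^p)\cdot\delta\cdot(c^qd^q)$ is well-balanced, which is precisely the property needed: Definition~\ref{well-balance} (together with the proof of Lemma~\ref{omega-form}) supplies a prefix $\omega_1$ of $\omega$, a concatenation of whole balanced blocks, with an even number of $a$'s and an odd number of $c$'s, and likewise a prefix $\omega_2$ with an odd number of $a$'s and an even number of $c$'s. Then for every $i$ the initial segment $\phi_{i,n}(\omega_1)$ of $T^\omega_n(i)$ leaves every element of $A_i^-$ at an even count and every element of $A_i^+$ at an odd count, so by Lemma~\ref{local-envelope} every path is on the upper envelope; symmetrically $\phi_{i,n}(\omega_2)$ shows every path is on the lower envelope. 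Hence $S$ is both upper and lower convex, proving~(3). I expect the main obstacle to be organizational rather than mathematical: getting the symmetry normalization exactly right (in particular the $p=0$ versus $q=0$ reduction, and checking that each of the three conclusions transforms correctly under reversal and under $x\leftrightarrow z$), together with noticing that ``well-balanced'' was designed exactly so that the mixed case yields a doubly convex system --- everything else is bookkeeping with the formula for $\phi_{i,n}$, the structural work having already been carried out in Lemma~\ref{omega-form} and Proposition~\ref{kara}.
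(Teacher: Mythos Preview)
Your proposal is correct and follows essentially the same approach as the paper: reduce by symmetry to $\sigma=\alpha\cdot\beta\cdot\gamma$ (with $q=0$ in the odd/even cases), invoke Lemma~\ref{omega-form} to obtain the structure of $\omega$, and then read the envelopes directly from the prefixes $\phi_{i,n}(a^pb^p)$ and $\phi_{i,n}(a^pb^p)\cdot\phi_{i,n}(\delta_1)$ via Lemma~\ref{local-envelope}, using well-balancedness for the mixed case. Your treatment of the even-case lower envelope (tracking the counts of the specific elements $1$ and $2$ in $A_i^-$ to rule out $i\geq 3$) is in fact a bit more explicit than the paper's, and your remark that the $\omega_1,\omega_2$ of Definition~\ref{well-balance} should be read as concatenations of whole balanced blocks (as the proof of Lemma~\ref{omega-form} makes clear) is exactly right.
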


\begin{proof}
Let $\omega \in W_{_{(ab,cd)}}$ be the word associated to $\sigma$ and
let $T=T^{\omega}_n$ be the tableau associated to $S$. The local
sequence of path $i$ is given by $T(i) = \phi_{_{i,n}}(\omega)$. By using
Lemma \ref{omega-form} we can obtain the structure of the local
sequences of $S$ and apply Lemma \ref{local-envelope} to determine the
envelopes of $S$. 

Suppose $\sigma$ is even or odd. Up to symmetry we may assume $\sigma$ is of
the form \[\sigma = \alpha \cdot z^p\] where $p$ is a positive
integer. By Lemma 
\ref{omega-form}, $\omega \in W_{_{(ab,cd)}}$ has
the form \[\omega = (a^pb^p)\cdot \delta_1 \cdot \delta_2 \cdots\] where
each $\delta_i \in \overline{B}_{_{(cd)}}$. Obviously the lower
envelope of $S$ contains 
path $1$ and the upper envelope of $S$ contains path $n$.
We claim
that the upper envelope also contains path 1. To see this, consider
the local sequence of path 1 which is given by \[T(1) = 
\phi_{_{1,n}}(\omega) = \phi_{_{1,n}}(\delta_1)
\phi_{_{1,n}}(\delta_2)\cdots\] by Proposition \ref{kara}.   
Since $\phi_{_{1,n}}(\delta_1)$ is an initial string of $T(1)$ which
contains each number in $\{2,\dots,n\}$ an odd number of times, Lemma
\ref{local-envelope} implies that the upper envelope of $S$ contains
path $1$. 

For every $1<i \leq n$, the local sequence of path $i$ is given by
\[T(i) = \phi_{_{i,n}}(\omega) = \phi_{_{i,n}}(a^pb^p)
\phi_{_{i,n}}(\delta_1) \phi_{_{i,n}}(\delta_2)\cdots.\]

If $p$ is odd, then $\phi_{i,n}(a^pb^p)$ is an initial string of
$T(i)$ which contains every number in $[n]^-_i$ an odd
number of times, and Lemma \ref{local-envelope} implies that the lower
envelope of $S$ contains path $i$. Furthermore, for $1<i<n$, we see
from the initial string $\phi_{_{i,n}}(a^pb^p)$ that any
initial string of $T(i)$ which contains each number in $[n]_i^+$ an
odd number of times also contains each number in $[n]_i^-$ an odd
number of times. Therefore the upper envelope of $S$ does not
contain path $i$, by Lemma \ref{local-envelope}. Thus $S$ is lower
convex while the upper envelope of $S$ contains only paths $1$ and
$n$. 

If $p$ is even, then $\phi_{_{i,n}}(a^pb^p)\phi_{_{i,n}}(\delta_1)$ is
an initial string of $T(i)$ which contains each number in $[n]_i^-$ an
even number of times and each number in $[n]_i^+$ an 
odd number of times. By Lemma \ref{local-envelope}, the upper envelope
of $S$ contains path $i$. Furthermore, for $2<i<n$ we see from the
initial string $\phi_{_{i,n}}(a^pb^p)$ that there is no initial string
of $T(i)$ which contains each number in $[n]_i^+$ and odd
number of times. Therefore Lemma \ref{local-envelope} implies that the
lower envelope of $S$ does not contain path $i$. Finally, if $i=2$,
then the initial term of $T(2)$ is $1$, and therefore the lower
envelope of $S$ contains path $2$. Thus $S$ is upper convex while the
lower envelope contains only paths $1$ and $2$. 

Up to symmetry this proves {\em (1)} and {\em (2)}. It remains
to prove {\em (3)}. 

By Lemma \ref{omega-form} we may assume that $\omega$ is of the
form
\[\omega = (a^pb^p)\cdot \delta \cdot (c^qd^q)\]
where $\delta \in U_{_{(ab,cd)}}$. Obviously the lower envelope of $S$
contains path $1$ and the upper envelope of $S$ contains path $n$,
and we want to show that $S$ is upper and lower convex. For this we
use the fact that $\omega$ is well-balanced. For $1\leq i
\leq n$ the local sequence of path $i$ is given by 
$T(i) = \phi_{_{i,n}}(\omega)$. 

Let $\omega_1\in B_{_{(ab)}} \cup B_{_{(cd)}}$ be an initial string
of $\omega$ which consists of an even number of $a$'s and an odd
number of $c$'s. If $1\leq i<n$, then $\phi_{_{i,n}}(\omega_1)$ is an
initial string of $T(i)$ which contains each number in $[n]_i^-$ an
even number of times and each number in $[n]_i^+$ an odd
number of times. By Lemma \ref{local-envelope}, the upper envelope of
$S$ contains path $i$. 
Hence $S$ is upper convex.

Let $\omega_2 \in B_{_{(ab)}} \cup B_{_{(cd)}}$ be an initial string
of $\omega$ which consists of an odd number of $a$'s and an even
number of $c$'s. If $1 < i \leq n$, then $\phi_{_{i,n}}(\omega_2)$ is an
initial string of $T(i)$ which contains each number in $[n]_i^-$ an
odd number of times and each number in $[n]_i^+$ an even 
number of times. By Lemma \ref{local-envelope}, the lower envelope of
$S$ contains path $i$. Hence $S$ is lower convex.
\end{proof}

\begin{proof}[Proof of Theorem \ref{UL envelope}]
  Let $S$ be a regular system of size $n$ with signature $\sigma =
  \sigma_1 \cdot \sigma'$ where $\sigma_1$ is irreducible. By Lemma
  \ref{factorize} there is a regular system $S_1$ of size $n$ with signature
  $\sigma_1$, and the paths which appear on the envelopes of $S_1$
  also appear on the corresponding envelopes of $S$. The claim that $S$
  is upper or lower convex follows from  Lemma \ref{parity-cases}. 
\end{proof}

\begin{proof}[Proof of Theorem \ref{envelope theorem}]
  Suppose $S$ is regular of size $n$ and that
  $S$ is not upper convex. Let
  $\sigma = \sigma_1 \cdots 
  \sigma_m$ be the signature of $S$ where each $\sigma_i$ is
  irreducible. It follows from  Proposition 
  \ref{factorize} that the upper envelope of $S$ can be determined
  from the envelopes corresponding to each individual $\sigma_i$,
  which in turn is determined by Lemma \ref{parity-cases}. This
  immediately implies that none of the $\sigma_i$ are mixed. Therefore
  we may assume that $\sigma$ is the concatenation of odd and even
  signatures. 

  Next, observe that if every $\sigma_i$ is odd, or every
  $\sigma_i$ is even, then the upper envelope of $S$ contains at most two
  distinct paths. If every $\sigma_i$ is odd, then only paths $1$ and $n$ are
  contained in the upper envelope of $S$ (by {\em (1)} of Lemma
  \ref{parity-cases}). On the other hand, if every $\sigma_i$ is even,
  then only paths $n-1$ 
  and $n$ are contained in the upper envelope of $S$ (by {\em (2)}
  of Lemma \ref{parity-cases}). This proves {\em (1)} of the
  theorem, for if $S$ is $2$-crossing, then $\sigma$ cannot contain
  both an even and an odd signature. 
  
  The argument above tells us that the only way we can have more than
  two, but not all paths in the upper envelope of $S$ is when $\sigma$ is the
  concatenation of both odd and even signatures. Suppose that this is
  the case.

  Suppose that $\sigma_i$ is the only even signature. In this case the upper
  envelope of $S$ will contain three paths. To see this note that
  {\em (2)} of Lemma \ref{parity-cases} implies that an even
  signature contributes only the two topmost paths to the upper envelope of
  $S$. These may be either paths $1$ and $2$, or paths $n-1$ and $n$
  depending on the number of odd signatures which precede $\sigma_i$
  in $\sigma$. The odd signatures, on the other hand, will always
  contribute only the two extreme paths, that is, paths $1$ and
  $n$. Therefore, if only one of the $\sigma_i$ is even while all the
  other are odd, then the upper envelope of $S$ contains either paths
  $1$, $2$, and $n$, or paths $1$, $n-1$, and $n$; see for instance
  Example \ref{ex:concat}. This proves
  {\em (2)} of the theorem, for if $S$ is $4$-crossing and $\sigma$ is
  comprised of both odd and even signatures, then $\sigma$ can contain
  at most one even signature. 

  Finally, suppose that there are at least two even signatures,
  $\sigma_i$ and $\sigma_j$. As before, $\sigma_i$ and $\sigma_j$
  contribute only the two topmost paths to the upper envelope of
  $S$. But if $\sigma_i$ and $\sigma_j$ are separated in $\sigma$ by
  an odd number of odd signatures, then the order of the paths will
  switch between them. Therefore one of them will contribute paths $1$
  and $2$, while the other will contribute paths $n-1$ and $n$ to the
  upper envelope of $S$. The odd signatures still only contribute
  paths $1$ and $n$. Therefore the upper envelope of $S$ may contain
  paths $1$, $2$, $n-1$ and $n$, but no more. This proves {\em
    (3)} of the theorem.   
\end{proof}

\section{Remarks and open problems} \label{remark4}

\subsection{} As we mentioned in the introduction there exists arbitrarily
  large families of convex bodies such that any two members have six
  common tangents, any four members are in convex position, but no
  five members are in convex position. To construct such a family we
  apply the same technique as was used in \cite{DHH}. It relies on the
  fact that if $f$ is a $2\pi$-periodic real $C^2$-smooth 
  function such that $f(t)+f''(t)>0$ holds for all $t$, then $f$ is
  the support function of a convex body. (See for instance Lemma 2.2.3
  in \cite{groemer}.) Hence, for any $2\pi$-periodic real $C^2$-smooth
  function $f$, there exists a constant $c_0$ such that $f + c$ is the
  support function of a body for all $c > c_0$. 

  Now consider any system where each pair of paths cross an {\em even}
  number of times. By identifying the endpoints, the paths can be
  represented by $2\pi$-periodic functions which furthermore can be
  approximated by $C^2$-smooth functions. Consequently, there is a
  common constant we can add to each of the (smoothened) functions
  which makes the system the dual of a family of convex bodies. As in
  the proof of Theorem \ref{main Erd Sze}, intersections between
  support curves are in one-to-one correspondence with common
  tangents, and subfamilies in convex position correspond to
  subsystems which are upper convex. Thus the construction we are
  looking for can be obtained from any system where each pair of paths
  cross six times, each subsystem of size four is upper convex, and no
  subsystem of size five is upper convex. A regular system with
  signature $\sigma = xyzxy^2xz^3yxzy^2zx^2$ would be such an example.

\subsection{} We noted in the introduction that Pach and T\'{o}th constructed
  an arbitrarily large family of {\em segments} where every triple is in
  convex position, but no four member are. When we dualize this
  family via the support function it is easily seen that we obtain a
  regular system. As explained in the previous remark, every regular
  system can be realized by a family of convex bodies. A natural
  conjecture is: Every regular $2k$-crossing system be obtained as
  the dual of a family of convex $n$-gons ? (Here $n$ should depend
  only on $k$, but not on the number of paths in the system.)

\subsection{} Determining the quantitative behavior for $h_k(n)$ in Theorem
  \ref{main Erd Sze} remains as one of the main open questions. Our
 bound is certainly not close to the truth, and it should be remarked
 that the condition that any $m_k$ members are in convex position
 changes the nature of the problem significantly from the original
 Erd\H{o}s--Szekeres problem for points when $m_k>3$: If every four points are
 convex position, then the whole set is in convex position. For the
 case of families of {\em pairwise disjoint} convex bodies, Bisztriczky and
 Fejes T\'{o}th \cite{bisz-k} have showed that if every five members
 are in convex position, then the corresponding Erd\H{o}s--Szekeres
 function is polynomial in $n$, and T\'{o}th \cite{toth-k} showed that
 this function is in fact linear. 

\subsection{} It would be interesting to find sharper bounds for the function  
  $S_k(n)$ given in Theorem \ref{general cupscaps}. The case $k=1$ is
  dual to the classical {\em cups-caps} theorem of Erd\H{o}s-Szekeres
  \cite{erd-sze1, erd-sze2} and it is known that $S_1(n) =
  \binom{2n-4}{n-2}+1$. We do not know of any better lower bound than
  this for $k>1$. In fact, it is unclear whether there is actually a
  dependency on the number of crossings. This leads us to conjecture
  the following generalization of the {\em cups-caps} theorem: For
  every $n\geq 3$ there is a minimal positive integer $S(n)$ such that any
  system of paths of size $S(n)$ where each pair of paths cross at
  least once contains a subsystem of size $n$ which is upper or lower convex.

\section{Acknowledgments}
The authors express sincere gratitude to an anonymous referee for the meticulous comments which greatly improved the exposition of our results. 

M.~G.~Dobbins was supported by NRF grant 2011-0030044 funded by the
government of South Korea (SRC-GAIA) and BK21.  

A.~F.~Holmsen was supported  by Basic Science Research Program through the
National Research Foundation of Korea funded by the Ministry of
Education, Science and Technology (NRF-2010-0021048). 

A.~Hubard was supported
by Fondation Sciences Math\'{e}matiques de Paris. A.~Hubard would like to thank
KAIST for their hospitality and support during his visit.

\end{document}